
\documentclass{gtart}


\usepackage[inner=20mm, outer=20mm, textheight=235mm]{geometry}

\usepackage[margin=1cm]{caption}
\usepackage{amsmath, amssymb, latexsym, amscd, graphicx, epstopdf}
\usepackage{mathabx}
\usepackage{euscript, wrapfig, setspace}
\usepackage{subfigure}
\usepackage{epsfig}
\usepackage{psfrag}
\usepackage{rotating, url}
\usepackage{makeidx}

\usepackage{tikz}
\usepackage{tikz-cd}
\usetikzlibrary{calc,matrix}

\usepackage[backref=page]{hyperref}
\hypersetup{
pdftoolbar=true,
pdfmenubar=true,
colorlinks=true,
linkcolor=blue,
citecolor=blue,
filecolor=blue,
urlcolor=blue
}

\DeclareMathAlphabet{\mathpzc}{OT1}{pzc}{m}{it}


\def\bkC{{\rm \kern.24em \vrule width.05em height1.4ex depth-.05ex 
\kern-.26em C}}
\def\C{\bkC}
\def\bksC{{\rm \kern.24em \vrule width.05em height1ex depth-.05ex 
\kern-.26em C}}
\def\sC{\bksC}
\def\bkE{{\rm I\kern-.22em E}}
 
\def\bkH{{\rm I\kern-.22em H}}
\def\H{\bkH} 
\def\bkN{{\rm I\kern-.17em N}}
\def\NN{\bkN}
\def\bkQ{{\rm \kern.24em \vrule width.05em height1.4ex depth-.05ex 
\kern-.26em Q}}

\def\bkR{{\rm I\kern-.17em R}}
\def\RR{\bkR}
\def\bkZ{{\rm Z\kern-.32em Z}}
\def\Z{\bkZ}
\def\bksZ{{\rm Z\kern-.22em Z}}

\def\whl{\mathcal{W}} 

\DeclareMathOperator{\sph}{sph}
\DeclareMathOperator{\val}{val}
\DeclareMathOperator{\Newt}{Newt}

\def\SL{\text{SL}_2(\C)}
\def\PSL{\text{PSL}_2(\C)}

\def\a{\mathfrak{a}}
\def\b{\mathfrak{b}}
\def\c{\mathfrak{c}}
\def\d{\mathfrak{d}}
\def\D{\mathfrak{D}}
\def\Ei{\mathfrak{E}}
\def\PEi{\overline{\Ei}}

\def\X{\mathfrak{X}}
\def\PX{\overline{\mathfrak{X}}}
\def\R{\mathfrak{R}}
\def\PR{\overline{\mathfrak{R}}}
\def\Red{\mathfrak{Red}}
\def\BC{\mathfrak{BC}}
\def\tree{\mathfrak{T}}
\def\Ta{\mathfrak{C}}
\def\PTa{\overline{\mathfrak{C}}}


\def\s{\overline{s}}
\def\u{\overline{u}}
\def\pX{\overline{X}}
\def\pY{\overline{Y}}
\def\pZ{\overline{Z}}
\def\pF{\overline{F}}


\def\P{\mathcal{P}}

\def\tri{\mathcal{T}}

\def\l{\mathcal{L}}
\def\m{\mathcal{M}}
\def\N{\mathcal{PF}}

\def\FH{\mathcal{PL}(\whl)}

\def\prho{\overline{\rho}}
\def\eps{\epsilon}
\def\G{\Gamma}

\def\chidefo{\chi}
\def\edefo{\mathpzc{r}}


\DeclareMathOperator{\Hom}{Hom}
\DeclareMathOperator{\tr}{tr}

\DeclareMathOperator{\im}{im}

\DeclareMathOperator{\te}{t}
\DeclareMathOperator{\pte}{\overline{t}}

\DeclareMathOperator{\ee}{e}
\DeclareMathOperator{\pee}{\overline{e}}

\DeclareMathOperator{\qe}{q}

\DeclareMathOperator{\dev}{dev}
\DeclareMathOperator{\Sph}{Sph}
\DeclareMathOperator{\wt}{wt}


\theoremstyle{plain}


\newtheorem{thm}{Theorem}
\newtheorem*{thm*}{Theorem}
\newtheorem{lem}[thm]{Lemma}
\newtheorem*{lem*}{Lemma}
\newtheorem{cor}[thm]{Corollary}
\newtheorem*{cor*}{Corollary}

\newtheorem*{cla*}{Claim}
\newtheorem{pro}[thm]{Proposition}
\newtheorem*{pro*}{Proposition}
\newtheorem{obs}[thm]{Observation}
\newtheorem*{obs*}{Observation}
\newtheorem{defn}[thm]{Definition}
\newtheorem*{defn*}{Definition}

\theoremstyle{remark}


\addtocounter{MaxMatrixCols}{4}


\begin{document}

\title[The Whitehead link]{Tropical varieties associated to ideal triangulations:}
\subtitle{The Whitehead link complement} 

\author[Stephan Tillmann]{Stephan Tillmann}

\begin{abstract}
This paper illustrates a computational approach to Culler-Morgan-Shalen theory using ideal triangulations, spun-normal surfaces and tropical geometry. Certain affine algebraic sets associated to the Whitehead link complement as well as their logarithmic limit sets are computed. The projective solution space of spun-normal surface theory is related to the space of incompressible surfaces and to the unit ball of the Thurston norm. It is shown that all boundary curves of the Whitehead link complement are strongly detected by its character variety. The specific results obtained can be used to study the geometry and topology of the Whitehead link complement and its Dehn surgeries. The methods can be applied to any cusped hyperbolic 3--manifold of finite volume.
\end{abstract}
\primaryclass{57M25, 57N10}
\secondaryclass{14A25}
\keywords{Whitehead link, spun-normal surface, representation variety, logarithmic limit set}


\makeshorttitle


\section{Introduction}

The study of the topology and the geometry of 3--manifolds via affine algebraic sets, which are related to representations of the fundamental group into $\SL$ and $\PSL$, has led to many deep results about 3--manifolds including the proofs of the Smith conjecture \cite{Mo}, the cyclic surgery theorem \cite{CGLS} and the finite surgery theorem  \cite{BZ96}. At the heart is a construction due to Culler and Shalen~\cite{cs}, which associates essential surfaces in the manifold to the finitely many ideal points of a curve in the character variety via actions on trees. Work of Morgan and Shalen \cite{ms1} associates, more generally, transversely measured codimension-one laminations in the manifold to the ideal points of a component of the character variety. 

For any ideal triangulation of a non-compact, orientable, topologically finite 3--manifold there is a so-called deformation variety~\cite{defo}, which is equipped with an algebraic map into (but possibly not onto) the character variety. The deformation variety is a version of Thurston's parameter space~\cite{t}; both depend on the choice of ideal triangulation and are therefore not unique (not even up to birational equivalence). The deformation variety is compactified in \cite{defo} using the \emph{logarithmic limit set} due to Bergman \cite{gb}, and it is shown that each point in the logarithmic limit set is associated with a transversely measured singular codimension-one foliation of $M,$ formalising a relationship originally observed by Thurston.
The logarithmic limit set of the deformation variety can be computed with the software package \tt{gfan}\rm, which resulted from work of Bogart, Jensen, Speyer, Sturmfels and Thomas~\cite{bjsst}. 
The ramification of this is an algorithmic approach to the character variety techniques of Culler, Morgan and Shalen in general, and to computing boundary curves strongly detected by the character variety in particular.

This paper illustrates this algorithmic approach to the character variety techniques of Culler, Morgan and Shalen by applying it to the Whitehead link complement (with its standard triangulation). 
To give a complete and unified picture, all relevant varieties and the maps between them are computed for the Whitehead link complement, and the spun-normal surfaces and ideal points are analysed in detail. The resulting information recovers the computation of the space of incompressible surfaces by Floyd and Hatcher~\cite{fh}, as well as the boundary curve space computed by Lash~\cite{la} and Hoste and Shanahan~\cite{hs}. It is shown that all non-compact spun-normal surfaces are essential and dual to ideal points of the character variety. However, it is also shown that there are \emph{fake ideal points} of the deformation variety: there is a trivial, closed normal surface associated to an ideal point of the deformation variety of the Whitehead link complement.
  
The contents of this paper is as follows:


{\bf Section \ref{sec:prel_varieties} (Character varieties, ideal points and essential surfaces)} This section summarises background material pertaining to various varieties associated to representations of 3--manifold groups into $\SL$ and $\PSL.$ It serves as a quick trip through Culler-Shalen theory and discusses Bergman's logarithmic limit set~\cite{gb} and the eigenvalue variety~\cite{tillus_ei}. The eigenvalue variety is a multi--cusped analogue of the $A$--polynomial of Cooper, Culler, Gillet, Long and Shalen~\cite{ccgls}.


{\bf Section \ref{sec:tillus_defo} (The deformation variety and normal surfaces)} This section describes the relationships between the deformation variety, the character and the eigenvalue varieties exhibited in \cite{defo}. The required elements of normal surface theory of an ideally triangulated 3--manifold are outlined, and the projective admissible solution space is defined and related to the logarithmic limit set of the deformation variety.


None of the material up to this point is new; it is mostly collated from \cite{gb, sh1, tillus_ei, tillmann08-finite, defo} and included for convenience of the reader.


{\bf Section \ref{sec:whl} (The Whitehead link) }
The ideal triangulation $\tri_\whl$ of the complement $\whl$ of the Whitehead link that is used for all computations in this paper is described, and differences between the right-handed and the left-handed Whitehead link, as well as different choices of meridians and longitudes are discussed.


{\bf Section \ref{whl:associated varieties} (Deformation and Eigenvalue varieties) }
The deformation variety $\D(\tri_\whl)$ and the Dehn surgery component $\PEi_0(\whl)$ of the $\PSL$--eigenvalue variety are computed. It turns out that $\D(\tri_\whl)$ and $\PEi_0(\whl)$ are birationally equivalent; this is shown via a variety $\PTa(\whl)$, which parameterises certain representations of $\pi_1(\whl)$ into $\PSL$ and which can be viewed as a de-singularisation of $\D(\tri_\whl).$ These computations were used by Indurskis~\cite{gi} to determine Culler-Shalen semi-norms. The $\SL$-- and $\PSL$--character varieties of $\whl$, and in particular their respective Dehn surgery components $\X_0 (\whl )$ and $\PX_0 (\whl )$, are computed in Appendix \ref{whl:Character varieties}.


All maps constructed are summarised in below commutative diagram. The horizontal maps are birational isomorphisms, and all other maps are generically 4--to--1 and correspond to actions by the Klein four group. The bar in the notation indicates that a variety is associated to representations into $\PSL.$

\begin{center}
\begin{tikzpicture}[ampersand replacement=\&]
  \matrix (m) [matrix of math nodes,row sep=3em,column sep=4em,minimum width=2em] {
  \& \Ta_0(\whl) \&  \Ei_0(\whl)  \\
    \D(\tri_\whl)  \& \PTa_0(\whl) \& \PEi_0(\whl)  \\
    \& \PX_0 (\whl ) \&   \\};
  \path[-stealth]
    (m-1-2) edge node [left] {$q_1$} (m-2-2)
    (m-2-2) edge node [right] {$$} (m-3-2)
    (m-1-3) edge node [right] {$q_3$} (m-2-3)
    (m-2-1) edge node [above] {$\edefo$} (m-2-2)
    (m-1-2) edge node [above] {$\ee$} (m-1-3)
    (m-2-2) edge node [above] {$\pee$} (m-2-3)
    (m-2-1) edge node [below] {$\chidefo$} (m-3-2);
\end{tikzpicture}
\end{center}


{\bf Section \ref{whl:embedded normal surfaces} (Embedded surfaces) }
The projective admissible solution space $\N(\tri_\whl)$ of spun-normal surface theory is computed, and a criterion due to Dunfield is used to determine which spun-normal surfaces are essential. The unit ball of the Thurston norm is derived using this information. Results by Walsh~\cite{wa} and Kang and Rubinstein~\cite{KR-2015} on the normalisation of essential surfaces are used to determine the space $\FH$ of all essential surfaces and the boundary curve space $\BC (\whl).$ This computation is compared with work of Floyd and Hatcher~\cite{fh}, Lash~\cite{la} and Hoste and Shanahan~\cite{hs}. 


{\bf Section \ref{whl:surfaces arising from degenerations} (Logarithmic limit sets) } The logarithmic limit set $\D_\infty(\tri_\whl)$ of the deformation variety $\D(\tri_\whl)$ is homeomorphic to a closed subset of $\N(\tri_\whl)$ according to \cite{defo}. Both $\D_\infty(\tri_\whl)$ and the logarithmic limit set of the eigenvalue variety $\PEi_0(\whl)$ are determined. The computation in particular shows that all strict boundary curves of $\whl$ are detected by the Dehn surgery component of the character variety. Since the boundary curves of fibres are detected by reducible characters, this shows that all boundary slopes of the Whitehead link are strongly detected. In addition, it is shown that $\D_\infty(\tri_\whl)$ has a fake ideal point.


{\bf Acknowledgements.}
The author thanks Debbie Yuster for the calculation in Appendix~\ref{sec:log lim debbie}, and the referee for suggestions that improved the exposition of this paper. This research has benefitted from the use of the following software packages: \tt{SnapPy }\rm by Marc Culler, Nathan Dunfield, Matthias Goerner and Jeff Weeks~\cite{snappy}, \tt{Regina }\rm by Ben Burton \cite{bab}, and \tt{gfan }\rm by Anders Jensen \cite{gfan}. The author thanks Craig Hodgson, Diane Maclagan, Hyam Rubinstein, Saul Schleimer and Bernd Sturmfels for helpful discussions. 
Research of the author is currently supported by an Australian Research Council Future Fellowship (project number FT170100316).


\section{Character varieties, ideal points and essential surfaces}
\label{sec:prel_varieties}

This section summarises background material pertaining to various varieties associated to representations of 3--manifold groups into $\SL$ and $\PSL.$ The first four subsections outline the bread and butter of Culler-Shalen theory; the reader is referred to Shalen~\cite{sh1} for details. This is followed up with a description of Bergman's logarithmic limit set~\cite{gb}  in \S\ref{combinatorics:Logarithmic limit set}, and the eigenvalue varieties~\cite{tillus_ei} in \S\S\ref{boundary:Eigenvalue variety}--\ref{boundary:PSL-Eigenvalue variety}.
 

\subsection{Character variety}
\label{sec:character variety}

Let $\G$ be a finitely generated group. The \emph{representation variety} of
$\G$ is the set $\R (\G) = \Hom (\G, \SL )$, which is regarded as an affine
algebraic set (see \cite{cs}). Two representations are
\emph{equivalent} if they differ by an inner automorphism of $\SL$. A
representation is \emph{irreducible} if the only subspaces of $\C^2$ invariant
under its image are trivial. This is equivalent to saying that the
representation cannot be conjugated to a representation by upper triangular
matrices. Otherwise a representation is \emph{reducible}.

Each $\gamma \in \G$ defines a \emph{trace function} $I_\gamma : \R (\G) \to
\C$ by $I_\gamma(\rho) = \tr\rho(\gamma)$, which is an element of the
coordinate ring $\C [\R (\G)]$.  For each $\rho \in \R (\G)$, its
\emph{character} is the function $\chi_\rho : \Gamma \to \C$ defined by
$\chi_\rho (\gamma ) = \tr\rho (\gamma )$. Irreducible representations are
determined by characters up to equivalence, and the reducible representations
form a closed subset $\Red (\G)$ of $\R (\G)$. The collection of characters
$\X (\G)$ can be regarded as an affine algebraic set, which is called the
\emph{character variety}. There is a regular map $\te : \R (\G) \to \X (\G)$
taking representations to characters.  If $\G$ is the fundamental group of a
topological space $M$, then $\R (M)$ and $\X (M)$ denote $\R (\G)$ and $\X
(\G)$ respectively.

There also is a character variety arising from representations into $\PSL$,
see Boyer and Zhang \cite{bozh}, and the relevant objects are denoted by
placing a bar over the previous notation. The natural map $\qe: \X (\G) \to
\PX (\G)$ is finite--to--one, but in general not onto.  As with the
$\SL$--character variety, there is a surjection $\pte : \PR (\G) \to \PX
(\G)$, which is constant on conjugacy classes, and with the property that if
$\prho$ is an irreducible representation, then $\pte^{-1}(\pte (\prho ))$ is
the orbit of $\prho$ under conjugation. 


\subsection{Dehn surgery component}
\label{Dehn surgery component}

Suppose $M$ is the interior of an orientable, compact, connected 3--manifold $\overline{M}$ with non-empty boundary consisting of a pairwise disjoint union of tori. If $M$ admits a complete hyperbolic structure of finite volume,
then there is a discrete and faithful representation $\pi_1(M) \to \PSL$. This
representation is necessarily irreducible, as hyperbolic geometry otherwise
implies that $M$ has infinite volume. 
The following result (see
\cite{t, sh1}) will be relied on:

\begin{thm}[Thurston]  \label{thurston thm}
  Let $M$ be a complete hyperbolic 3--manifold of finite volume with $h$
  cusps, and let $\prho_0 : \pi_1(M) \to \PSL$ be a discrete and faithful
  representation associated to the complete hyperbolic structure. Then
  ${\prho_0}$ admits a lift $\rho_0$ into $\SL$ which is still discrete and
  faithful. The (unique) irreducible component $\X_0$ in the $\SL$--character
  variety containing the character $\chi_0$ of $\rho_0$ has (complex)
  dimension $h$.
  
  Furthermore, if $T_1, \ldots , T_h$ are the boundary tori of a compact core
  of $M$, and if $\gamma_i$ is a non--trivial element in $\pi_1(M)$ which is
  carried by $T_i$, then $\chi_0 ( \gamma_i) = \pm 2$ and $\chi_0$ is an
  isolated point of the set
  \begin{equation*}
    \X^* = \{ \chi \in \X_0 \mid I^2_{\gamma_1} = \ldots = I^2_{\gamma_h} = 4 \}.
  \end{equation*}
\end{thm}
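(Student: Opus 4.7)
The plan is to establish the three claims in sequence, with Thurston's hyperbolic Dehn surgery theorem as the main ingredient. For the $\SL$-lift of $\prho_0$, the obstruction to lifting a homomorphism $\pi_1(M)\to\PSL$ to $\SL$ lies in $H^2(\pi_1(M);\Z/2)$. Since $M$ is the interior of a compact orientable 3-manifold with non-empty boundary, it is homotopy equivalent to a finite 2-complex, so this group vanishes and a lift $\rho_0$ exists. The kernel of $\SL\to\PSL$ is central, so discreteness and faithfulness are preserved under lifting.

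For the trace values at peripheral elements, in the complete hyperbolic structure each boundary torus $T_i$ corresponds to a cusp, and discreteness of $\prho_0$ forces $\prho_0(\pi_1(T_i))$ to consist of parabolic isometries fixing a common point at infinity -- a loxodromic or elliptic element in a rank-two abelian subgroup would break discreteness. Parabolic matrices in $\SL$ have trace $\pm 2$, whence $\chi_0(\gamma_i)=\pm 2$ for each non-trivial peripheral $\gamma_i$.

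For the dimension of $\X_0$ and the isolation of $\chi_0$ in $\X^*$, I would use Thurston's hyperbolic Dehn surgery theorem, which provides, up to conjugation, a smooth local analytic parameterisation of deformations of $\rho_0$ by $h$ complex variables -- conveniently taken to be the logarithmic eigenvalues of chosen meridians on the $h$ cusps. This yields $\dim_\C \X_0 \geq h$ near $\chi_0$. The matching upper bound follows from Weil's infinitesimal deformation theory together with the cohomological computation of $H^1(M;\operatorname{Ad}\rho_0)$ via Poincar\'e--Lefschetz duality for $(M,\partial M)$. For the isolation, consider the regular map
\[
  \Phi : \X_0 \longrightarrow \C^h, \qquad \chi \longmapsto \bigl( I^2_{\gamma_1}(\chi)-4,\; \ldots,\; I^2_{\gamma_h}(\chi)-4 \bigr),
\]
so that $\X^*=\Phi^{-1}(0)$. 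In the Dehn surgery coordinates the $i$-th component of $\Phi$ vanishes to finite order (namely order two, reflecting that the trace is invariant under inversion of the eigenvalue) in the $i$-th cusp parameter and is non-degenerate in that parameter to leading order. Hence $\Phi$ is locally finite at $\chi_0$, the fibre $\Phi^{-1}(0)=\X^*$ is zero-dimensional there, and $\chi_0$ is isolated in $\X^*$. The main obstacle is precisely Thurston's hyperbolic Dehn surgery theorem; once its local parameterisation by $h$ independent cusp variables is available, each remaining assertion follows quickly.
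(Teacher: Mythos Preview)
The paper does not prove this theorem; it is quoted from Thurston's notes and Shalen's survey \cite{t, sh1} as background. So there is no proof in the paper to compare against, and your sketch is an attempt to supply what the paper omits.

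Your outline of the dimension count and the isolation of $\chi_0$ is the standard one and is essentially correct as a sketch: the lower bound on $\dim \X_0$ comes from hyperbolic Dehn surgery, the upper bound from the computation of $H^1(M;\operatorname{Ad}\rho_0)$ via the half-lives/half-dies principle, and isolation follows because the cusp shapes give local coordinates in which each $I^2_{\gamma_i}-4$ has an isolated zero. The argument that peripheral elements are parabolic is also fine.

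However, your argument for the existence of the $\SL$-lift has a genuine gap. You claim that because $M$ is homotopy equivalent to a finite $2$-complex, the obstruction group $H^2(\pi_1(M);\Z/2)\cong H^2(M;\Z/2)$ vanishes. This is false: a $2$-complex can have nonzero $H^2$, and in fact for the Whitehead link complement itself one has $H_2(\whl;\Z)\cong\Z$ (generated by a Seifert surface for one component), hence $H^2(\whl;\Z/2)\cong\Z/2\neq 0$. So the obstruction group does not vanish in general, and one must instead show that the specific obstruction \emph{class} vanishes. The standard argument (due to Thurston, with a clean treatment by Culler) uses the fact that every orientable $3$-manifold is parallelisable and hence spin: a choice of spin structure trivialises the obstruction, or equivalently one checks that the second Stiefel--Whitney class of the associated flat bundle is zero. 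You should replace your vanishing-of-$H^2$ claim with this argument.
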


The respective irreducible components containing the so--called \emph{complete
  representations} $\rho_0$ and ${\prho_0}$ are denoted by $\R_0(M)$ and
$\PR_0(M)$ respectively.  In particular, $\te (\R_0) = \X_0$ and $\pte (\PR_0)
= \PX_0$ are called the respective \emph{Dehn surgery components} of the
character varieties of $M$, since the holonomy representations of hyperbolic
manifolds or orbifolds obtained by performing high order Dehn surgeries on $M$
are near $\prho_0$ (see \cite{t}). We remark that Dehn surgery components are generally
not unique, even if one considers representations into $\PSL$ (see \cite{CaLuTi} for examples).
The point is that there are two conjugacy classes of discrete and faithful representations into $\PSL.$ They correspond to the two different choices of orientation of $M$ and are related by complex conjugation.


\subsection{Essential surfaces}

An \emph{(embedded) surface} $S$ in the topologically finite 3--manifold $M = \text{int} (\overline{M})$ will always mean a 2--dimensional PL submanifold of $M$ with the property that its closure $\overline{S}$ in $\overline{M}$ is \emph{properly embedded} in $\overline{M},$ that is, a closed subset of $\overline{M}$ with $\partial \overline{S} = \overline{S} \cap \partial \overline{M}.$ A surface $S$ in $M$ is said to be \emph{essential} if its closure $\overline{S}$ is essential in $\overline{M}$ as described in the following definition:
\begin{defn*} \cite{sh1}
A surface $\overline{S}$ in a compact, irreducible, orientable 3--manifold $\overline{M}$ is said to be essential if it has the following five properties:
    \begin{enumerate}
    \item $\overline{S}$ is bicollared;
    \item the inclusion $\pi_1(\overline{S}_i) \to \pi_1(\overline{M})$ is injective for every
      component $\overline{S}_i$ of $\overline{S}$;
    \item no component of $\overline{S}$ is a 2--sphere;
    \item no component of $\overline{S}$ is boundary parallel;
    \item $\overline{S}$ is nonempty.
    \end{enumerate}
\end{defn*}
The boundary curves of $\overline{S},$ $\overline{S} \cap \partial\overline{M},$ are also called the \emph{boundary curves} of $S.$


\subsection{Ideal points, and valuations}
\label{sec:cs-theory}

In case that $V$ is a 1--dimensional
irreducible variety, there is a unique non--singular projective variety
$\tilde{V}$ which is birationally equivalent to $\overline{J(V)}$.
$\tilde{V}$ is called the \emph{smooth projective completion} of $V$, and
the \emph{ideal points} of $\tilde{V}$ are the points of $\tilde{V}$
corresponding to $\overline{J(V)} - J(V)$ under the birational equivalence.
Moreover, the function fields of $V$ and $\tilde{V}$ are isomorphic.

Let $C \subset \X(M)$ be a curve,
i.e. a 1--dimensional irreducible subvariety, and denote its smooth projective
completion by $\tilde{C}$. We will refer to the ideal points of $\tilde{C}$
also as ideal points of $C$.
The function fields of $C$ and $\tilde{C}$ are isomorphic. Denote them by
$K$. Any ideal point $\xi$ of $\tilde{C}$ determines a (normalised, discrete,
rank 1) valuation $ord_\xi$ of $K$, by
{\small
\begin{equation*}
   ord_\xi (f) =
   \begin{cases}k & \text{if $f$ has a zero of order $k$ at $\xi$} \\
           \infty & \text{if $f=0$} \\
   	       -k & \text{if $f$ has a pole of order $k$ at $\xi$}
   \end{cases}
\end{equation*}
}
Note that $ord_\xi (z)=0$ for all non-zero constant functions $z \in \C$.
In the language of algebraic geometry, the valuation ring
$\{ f \in K \mid ord_\xi (f) \ge 0\}$ of
$ord_\xi$ is the local ring at $\xi$.

Culler and Shalen~\cite{cs, sh1} associate essential surfaces in a 3--manifold $M$ to
\emph{ideal points of curves} in the character variety $\X (M)$. The ingredients are as
follows: 
\begin{enumerate}
\item A curve in $\X (M)$ yields a field $F$ with a discrete valuation at
each ideal point and a representation $\mathcal{P} : \pi_1(M) \to \text{SL}_2(F)$.
\item The group $\text{SL}_2(F)$ acts on the associated Bruhat-Tits tree $\tree_v$ and using $\mathcal{P}$ we can pull
this back to an action of $\pi_1(M)$ on $\tree_v$ that is non--trivial and without inversions.
\item A non--trivial action of $\pi_1(M)$ without inversions on a trees gives essential
surfaces in $M$ via a construction due to Stallings.
\end{enumerate}


\subsection{Logarithmic limit set}
\label{combinatorics:Logarithmic limit set}

Let 
$$\C[X^{\pm}]=\C[X_1^{\pm 1}, \ldots , X_n^{\pm 1}].$$
Given an ideal $I\subset \C[X^{\pm}],$ let $V = V(I)$ be the corresponding subvariety of $(\C -\{ 0\})^n.$ Bergman gives the following three descriptions of a \emph{logarithmic limit set} in \cite{gb}:

\emph{1. The tropical variety:} Define $V_{\val}$ as the set of $n$--tuples
\begin{equation} \label{log lim def2}
   (-v(X_1), \ldots , -v(X_n))
\end{equation}
as $v$ runs over all real--valued valuations on $\C[X^{\pm}]/I$ satisfying $\sum v(X_i)^2 = 1.$

\emph{2. A geometric construction:} Define the support $s(f)$ of an element $f \in \C[X^{\pm}]$ to be the set
of points $\alpha = (\alpha_1,{\ldots} ,\alpha_n) \in \Z^n$ such that
$X^\alpha = X_1^{\alpha_1} \cdots X_n^{\alpha_n}$ occurs with non--zero
coefficient in $f$. Then define $V_{\sph}$ to be the set of $\xi \in
S^{n-1}$ such that for all non--zero $f \in I$, the maximum value of the dot
product $\xi \cdot \alpha$ as $\alpha$ runs over $s(f)$ is assumed at least
twice. Geometrically, this is:
$$
V_{\sph} = \bigcap_{0\neq f \in I} \Sph(\Newt(f)),
$$
where $\Newt(f)$ is the Newton polytope and $\Sph(\Newt(f))$ its spherical dual. The latter consists of all outward pointing unit normal vectors to the support planes of $\Newt(f)$ which meet $\Newt(f)$ in more than
one point.

Bergman shows in \cite{gb} that $V_{\val}=V_{\sph}.$

\emph{3. The logarithmic limit set:} Define $V_{\log}$ as the set of limit points on $S^{n-1}$ of the set
of real $n$--tuples in the interior of $B^n$:
\begin{equation} \label{log lim def1}
  \bigg\{ \frac{ (\log |x_1|, \ldots , \log |x_n|)}
            {\sqrt{1 + \sum (\log |x_i|)^2}}
                      \mid x \in V \bigg\} .
\end{equation}

Bergman shows in \cite{gb} that we have  $V_{\log} \subset V_{\val}=V_{\sph}.$ It follows from work by Bieri and Groves \cite{BG1984} that all sets in fact coincide:

\begin{thm}[Bergman, Bieri--Groves] 
We have $V_{\log} = V_{\val}=V_{\sph}.$
\end{thm}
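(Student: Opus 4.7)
The plan is to verify the only remaining inclusion, $V_{\val}\subset V_{\log}$, since Bergman's earlier result already yields $V_{\sph}=V_{\val}$ and $V_{\log}\subset V_{\val}$. Fix $\xi\in V_{\val}$, written as $\xi=(-v(X_1),\ldots,-v(X_n))$ for some real-valued valuation $v$ on the coordinate ring $A=\C[X^{\pm}]/I$ with $\sum v(X_i)^2=1$. The goal is to exhibit a sequence of points $x^{(k)}\in V$ whose normalised logarithmic vectors, as in \eqref{log lim def1}, converge to $\xi$.

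The strategy has two layers, corresponding to the two key inputs from \cite{BG1984}. \emph{First}, one proves a structure theorem: the set $V_{\val}$ is a finite union of rationally defined polyhedral cones intersected with $S^{n-1}$, and in particular the points arising from valuations with values in $\Q$ are dense in $V_{\val}$. This is done by showing that, up to an equivalence relation, any real-valued valuation on $A$ can be perturbed to a $\Q$-valued one whose associated sphere-point is arbitrarily close; the key tool is that $A$ is finitely generated, so only finitely many of Bergman's supporting-hyperplane conditions are active near $\xi$. \emph{Second}, one realises each rational valuation as coming from an honest analytic curve in $V$: a $\Q$-valued valuation is, after clearing denominators, essentially discrete of rank one, so by passing to its completion and using the Newton--Puiseux theorem (or, equivalently, by parameterising an analytic branch of the curve through a suitably chosen centre of the valuation), one produces a holomorphic arc
\begin{equation*}
\phi\colon (0,\epsilon)\to V\subset (\C^*)^n,\qquad \log|\phi_i(t)|=v(X_i)\log t + O(1)\ \text{as}\ t\to 0^+.
\end{equation*}

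Combining these, for a sequence $\xi^{(k)}\in V_{\val}$ of $\Q$-points with $\xi^{(k)}\to\xi$, pick for each $k$ such a parametrising arc $\phi^{(k)}$ and a small parameter value $t_k$. Setting $x^{(k)}=\phi^{(k)}(t_k)$ and substituting into \eqref{log lim def1} yields
\begin{equation*}
\frac{\log|x^{(k)}_i|}{\sqrt{1+\sum_j(\log|x^{(k)}_j|)^2}} \longrightarrow \frac{-v^{(k)}(X_i)}{\sqrt{\sum_j v^{(k)}(X_j)^2}}=\xi^{(k)}_i,
\end{equation*}
as $t_k\to 0^+$, because $\log t_k\to-\infty$ dominates the $O(1)$ error and the normalising denominator absorbs the sign. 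A diagonal choice of $(k,t_k)$ then gives a sequence in $V$ whose normalised log-vectors converge to $\xi$, placing $\xi\in V_{\log}$.

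The technical heart, and the main obstacle, is the second layer: turning an abstract real-valued valuation on $A$ into an actual one-parameter family of points of $V$ with the prescribed logarithmic asymptotics. For rational (essentially discrete, rank one) valuations this is classical via Puiseux expansions once a centre on a suitable model of $V$ has been chosen, but it requires care that the centre is reached by an arc inside $V\cap(\C^*)^n$ rather than on a boundary stratum where some $X_i$ vanishes identically. The approximation-by-$\Q$-points step then relies crucially on the polyhedral structure of $V_{\val}$, which is the genuinely non-trivial contribution of Bieri--Groves and the step one would not attempt to reprove from scratch here.
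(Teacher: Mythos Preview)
The paper does not supply its own proof of this theorem: it is stated as a known result, with the inclusion $V_{\log}\subset V_{\val}=V_{\sph}$ attributed to Bergman~\cite{gb} and the reverse inclusion to Bieri--Groves~\cite{BG1984}. There is therefore no ``paper's proof'' to compare against; your task was really to reconstruct an argument from the cited literature.

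Your outline is a reasonable reconstruction of how the two ingredients combine, and you have correctly identified where the weight lies: polyhedrality of $V_{\val}$ (the Bieri--Groves contribution) gives density of rational points, and each rational point is then realised by an honest curve in $V$. Two small remarks. First, the invocation of ``Newton--Puiseux'' for the second layer is slightly off-target: what one actually uses is that a rank-one discrete valuation on the function field of an irreducible component of $V$ is realised by a map from the spectrum of a DVR (equivalently, an analytic arc) into that component, with the coordinate functions $X_i$ pulling back to elements of the prescribed order; this is closer to the valuative criterion or curve selection than to Puiseux expansions per se. Second, to run the diagonal argument cleanly you should also note that $V_{\log}$, being the set of accumulation points of a subset of $S^{n-1}$, is closed, so it suffices to show that the dense set of rational points of $V_{\val}$ lies in $V_{\log}$. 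With those adjustments your sketch matches the standard route through the literature.
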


Let $V_\infty=V_{\log}$ be the logarithmic limit set of the variety $V$. The ideal generated by a set of polynomials $\{ f_i\}_i$ shall be denoted by $I(f_i)_i$, and the variety generated by the ideal $I$ is denoted by $V(I)$ or $V(f_i)_i$, if $I = I(f_i)_i$.
The following fact completely determines the tropical variety of a principal ideal:

\begin{pro}
  Let $f \in \C[X^\pm ]$. If $f \ne 0$, then $V(f)_\infty = \Sph(\Newt(f))$.
\end{pro}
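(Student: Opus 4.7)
My plan is to invoke the preceding Bergman--Bieri--Groves theorem applied to the principal ideal $I(f) \subset \C[X^\pm]$: this identifies $V(f)_\infty$ with $V_{\sph} = \bigcap_{0 \ne g \in I(f)} \Sph(\Newt(g))$, so it suffices to show that this infinite intersection collapses to the single term at $g = f$. The inclusion $V(f)_\infty \subset \Sph(\Newt(f))$ is immediate since $f$ itself lies in $I(f)$; the real content of the proposition is the reverse inclusion $\Sph(\Newt(f)) \subset \Sph(\Newt(g))$ for every nonzero $g = hf \in I(f)$.

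To establish this reverse inclusion I would use two ingredients about Newton polytopes. The first is the multiplicative identity $\Newt(hf) = \Newt(h) + \Newt(f)$, with the right-hand side interpreted as a Minkowski sum. The second is the well-known face formula $F_\xi(P+Q) = F_\xi(P) + F_\xi(Q)$ for the face in direction $\xi$ of a Minkowski sum, together with the fact that such a sum is a single point precisely when both $F_\xi(P)$ and $F_\xi(Q)$ are single points. Granting these, if $\xi \in \Sph(\Newt(f))$ then by definition $F_\xi(\Newt(f))$ contains more than one point, so the same holds for $F_\xi(\Newt(hf))$, and hence $\xi \in \Sph(\Newt(hf))$, as required.

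The main obstacle, and the only point that really needs care, is the Minkowski identity $\Newt(hf) = \Newt(h) + \Newt(f)$. This is not automatic, because a priori the support $s(hf)$ is only contained in $s(h) + s(f)$ and cancellations in the product can remove monomials. The observation that rescues the identity is that cancellation cannot occur at a \emph{vertex} of $\Newt(h) + \Newt(f)$: any such vertex is uniquely expressible as $v_h + v_f$ where $v_h \in s(h)$ and $v_f \in s(f)$ both maximise a common linear functional $\xi \cdot (-)$ on their respective supports, and consequently the coefficient of $X^{v_h + v_f}$ in $hf$ is exactly the product of two nonzero coefficients, hence nonzero. Thus every vertex of $\Newt(h) + \Newt(f)$ lies in $s(hf)$, which gives the Minkowski equality of Newton polytopes and completes the argument.
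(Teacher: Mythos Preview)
Your proof is correct. The paper does not actually supply a proof of this proposition; it is stated without argument as a known fact in the expository background section on logarithmic limit sets. Your route---reducing via the Bergman--Bieri--Groves identification $V(f)_\infty = V_{\sph}$ to the containment $\Sph(\Newt(f)) \subset \Sph(\Newt(hf))$ for every nonzero $h$, and establishing that via the Minkowski identity $\Newt(hf) = \Newt(h) + \Newt(f)$ and the face formula for Minkowski sums---is the standard one, and the vertex-level argument you give to rule out cancellation is exactly what is needed.
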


In general, we have the following:

\begin{thm}[Bieri-Groves Polyhedrality] \label{Bergman-Bieri-Groves}
  The logarithmic limit set $V_\infty$ of an algebraic variety $V$ in $(\C -\{
  0\})^n$ is a finite union of rational convex spherical polytopes, all having the same dimension, namely
  \begin{equation*}
    (\dim_\sC V) - 1.
  \end{equation*}
\end{thm}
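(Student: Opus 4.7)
The plan is to work with the valuation/spherical description $V_\infty = V_{\val} = V_{\sph}$, which exhibits $V_\infty$ as the concrete intersection $\bigcap_{0 \neq f \in I} \Sph(\Newt(f))$, and to split the proof into establishing the polyhedral finite-union structure first and then pinning down the dimension.

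For the polyhedral structure, the preceding proposition already shows that each individual $\Sph(\Newt(f))$ is a finite union of rational convex spherical polytopes, being the intersection of the unit sphere with the codimension-$\geq 1$ skeleton of the normal fan of the lattice polytope $\Newt(f)$. The genuine content is therefore to show that the a priori infinite intersection is in fact determined by only finitely many $f$. The cleanest modern approach invokes the Gr\"obner fan of $I$: to each weight $\omega \in \RR^n$ one associates the initial ideal $\mathrm{in}_\omega(I)$, and distinct initial ideals occur on only finitely many open cones, which together with their faces assemble into a finite rational polyhedral fan. The Fundamental Theorem of Tropical Geometry (applicable because $\C$ is algebraically closed) identifies $\omega \in V_{\val}$ with the condition that $\mathrm{in}_\omega(I)$ contains no monomial, and this is constant on each Gr\"obner cone. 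Consequently $V_\infty$ is the intersection of the sphere with a finite union of closed rational polyhedral cones, yielding the claimed structure. A Gr\"obner-free alternative, closer to Bieri and Groves's original argument, is to extract a finite $f_1, \ldots, f_k \in I$ with $V_\infty = \bigcap_i \Sph(\Newt(f_i))$ by applying Noetherianity to the decreasing sequence of partial intersections.

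For the dimension equality, the two bounds are proved separately. The upper bound follows from Noether normalization, which supplies a finite surjection onto a $d$-dimensional subtorus, where $d = \dim_\sC V$; the induced map on logarithmic limit sets is surjective and cannot raise spherical dimension, while the target has limit set of dimension $d - 1$ by iterating the principal-ideal proposition. The lower bound is the heart of the theorem: fix a transcendence basis $t_1, \ldots, t_d$ of the function field of $V$ and, for a generic weight $\omega \in \RR^d$, form the real-valued valuation on $\C(t_1, \ldots, t_d)$ sending $t_i$ to $-\omega_i$; extending this through the finite extension $\C(V)/\C(t_1, \ldots, t_d)$ and restricting to $\C[X^{\pm}]/I$ produces a point of $V_{\val}$. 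As $\omega$ varies over a suitable generic open set, these points sweep out a $(d-1)$-dimensional family in $V_\infty$.

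The main obstacle is the dimension lower bound. The polyhedral structure is essentially formal once Gr\"obner-fan machinery (or direct Noetherianity) is available, but matching $\dim V_\infty$ with $\dim_\sC V - 1$ requires a genuine interplay between Krull dimension and valuation theory: one must control the extension of real-valued valuations through a finite field extension and verify that generically chosen weights genuinely remain in the image and do not collapse onto a lower-dimensional subset of the sphere. This is precisely what makes the Bieri-Groves polyhedrality theorem non-trivial and is the substantive content of their original paper.
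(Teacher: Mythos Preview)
The paper does not give a proof of this theorem: it is stated as a background result, attributed to Bieri and Groves \cite{BG1984}, and simply quoted without argument. There is therefore no proof in the paper to compare your proposal against.

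That said, your outline is a reasonable sketch of how this result is established, and it correctly identifies the dimension lower bound via valuation extension as the substantive step. A few points would need tightening in an actual proof. For the upper bound, Noether normalisation in the affine sense does not automatically give a \emph{monomial} (i.e.\ torus) map $V \to (\C-\{0\})^d$, and only monomial maps induce linear maps on logarithmic limit sets; one needs a toric version of Noether normalisation, which is available but requires justification. Your phrase ``by iterating the principal-ideal proposition'' is also off: the target $(\C-\{0\})^d$ is the full torus, whose logarithmic limit set is all of $S^{d-1}$, not something cut out by hypersurfaces. Finally, the statement asserts that \emph{all} maximal polytopes have the same dimension (pure-dimensionality), not merely that the top dimension is $d-1$; your argument gives the latter but not obviously the former, which in the modern treatment follows from the balancing condition or from applying the dimension argument locally.
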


The following is of interest to applications:

\begin{lem}\cite{tillus_ei}\label{curve finding lemma}
  Let $V$ be an algebraic variety in $(\C -\{ 0\})^m$, and $\xi \in V_\infty$
  be a point with rational coordinate ratios. Then there is a curve $C$, i.e.
  an irreducible subvariety of complex dimension one, in $V$ such that $\xi
  \in C_\infty$.
\end{lem}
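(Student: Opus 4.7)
Begin by reducing to $V$ irreducible via the decomposition $V_\infty = \bigcup_i (V_i)_\infty$ over irreducible components, so that $\xi \in (V_0)_\infty$ for some component $V_0$ and it suffices to find the curve inside $V_0$. Put $d := \dim_\sC V_0$. The case $d = 1$ is immediate (take $C = V_0$), so assume $d \ge 2$. The rational coordinate ratio hypothesis lets us write $\xi = w/\|w\|$ for a primitive vector $w \in \Z^m$, so that the ray $\RR_{\ge 0} w$ lies in the cone over $V_\infty$; by Theorem~\ref{Bergman-Bieri-Groves} this ray is contained in some rational polytope of $(V_0)_\infty$ of dimension $d-1$.

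The plan is to intersect $V_0$ with $d-1$ translates of codimension-one subtori chosen tropically transversally through the ray $\RR_{\ge 0} w$. Choose integer vectors $\alpha_1, \ldots, \alpha_{d-1} \in w^\perp \cap \Z^m$ in general position, which is possible since $w^\perp \cap \Z^m$ is a free abelian group of rank $m - 1 \ge d - 1$. The hyperplanes $\alpha_j^\perp$ in $\RR^m$ all contain the ray $\RR_{\ge 0} w$, and by genericity their common intersection meets the polytope of $(V_0)_\infty$ containing $\xi$ only along this ray. For generic scalars $c_1, \ldots, c_{d-1} \in \C^*$, Bertini's theorem gives that
\[
V' := V_0 \cap \bigcap_{j=1}^{d-1} \{X \in (\C^*)^m : X^{\alpha_j} = c_j\}
\]
is non-empty of pure dimension one; let $C$ be an irreducible component of $V'$ (to be pinned down below).

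To verify $\xi \in C_\infty$, invoke the valuation characterization $V_\infty = V_\val$. The hypothesis gives a real-valued valuation $v$ on $\C(V_0)$ with $(-v(X_1), \ldots, -v(X_m))$ proportional to $w$. Because $\alpha_j \perp w$, each monomial $X^{\alpha_j}$ is a $v$-unit with residue $\overline{X^{\alpha_j}} \in k_v$, and for generic $c_j$ (avoiding this residue in $\C \subset k_v$) the binomial $X^{\alpha_j} - c_j$ is also a $v$-unit. Regarding $v$ as a point of a suitable compactification of $V_0$ at infinity, these non-vanishing conditions force $v$ to be centered on the closure of $V'$; a restriction/specialization argument then yields a rank-one valuation on the function field of the irreducible component $C \subset V'$ on which $v$ is centered, whose restriction to the $X_i$ remains proportional to $w$. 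This is exactly the condition for $\xi \in C_\infty$ via $C_\val = C_\infty$.

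The main obstacle is making this valuation restriction precise: one must verify, using Abhyankar's inequality for the rank and residual transcendence degree, that restricting $v$ to $\C(C)$ yields a non-trivial rank-one valuation rather than collapsing to the trivial one. Equivalently, one needs a tropical transversality statement guaranteeing that the cut by the binomials $X^{\alpha_j} - c_j$ meets the tropical variety of $V_0$ transversally along the ray $\RR_{\ge 0} w$, and that at least one irreducible component of $V'$ inherits this ray in its tropicalization. Both conditions can be reduced to finitely many generic choices (of the $\alpha_j$ inside $w^\perp$ and of the $c_j$ in $\C^*$), but coordinating them is the technical heart of the argument.
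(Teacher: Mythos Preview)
The paper does not supply a proof of this lemma; it is simply quoted from \cite{tillus_ei}. So there is no in-paper argument to compare against, and I will assess your sketch on its own terms.

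Your overall strategy---reduce to an irreducible component $V_0$ of dimension $d$, then slice by $d-1$ binomial hypersurfaces $X^{\alpha_j}=c_j$ with $\alpha_j\in w^\perp\cap\Z^m$---is a legitimate route and can be made to work. But the valuation step you give contains an actual error, not merely the gap you flag. You choose $c_j$ to \emph{avoid} the residue $\overline{X^{\alpha_j}}\in k_v$, so that $X^{\alpha_j}-c_j$ is a $v$-unit, and then assert that ``these non-vanishing conditions force $v$ to be centered on the closure of $V'$''. This is backwards: the center of $v$ lies on the zero locus of $X^{\alpha_j}-c_j$ exactly when $v(X^{\alpha_j}-c_j)>0$, i.e.\ when $c_j$ \emph{equals} the residue. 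With your choice the center of $v$ avoids $\overline{V'}$ altogether, and there is no specialization of $v$ to any component of $V'$. (Choosing $c_j$ equal to the residue is also problematic, since that residue need not lie in $\C\subset k_v$ at all.)

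The clean way to finish along your lines is to drop the center-of-$v$ argument and invoke tropical transverse intersection directly: for generic $\alpha_j\in w^\perp$ and generic $c_j\in\C^*$, the tropicalization of $V'=V_0\cap\bigcap_j\{X^{\alpha_j}=c_j\}$ agrees, near the ray $\RR_{\ge 0}w$, with the set-theoretic intersection of $(V_0)_\infty$ with the hyperplanes $\alpha_j^\perp$, and hence contains $\xi$. Since $V'_\infty$ is the union of $(C_i)_\infty$ over the irreducible components $C_i$ of $V'$, some component $C$ has $\xi\in C_\infty$. Alternatively, one can exploit the rationality hypothesis more directly: a rational point of $V_\infty=V_{\val}$ is realized by a \emph{discrete} (rank one, $\Z$-valued) valuation on $\C(V_0)$, whose valuation ring is a DVR; passing to a projective model on which this valuation has divisorial center and taking a generic curve in $V_0$ meeting that divisor transversally produces $C$ with the required orders of vanishing. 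Either route supplies the missing lemma, but neither is the argument you wrote.
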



\subsection{Eigenvalue variety}
\label{boundary:Eigenvalue variety}

Let $M$ be an orientable, irreducible, compact 3--manifold with non--empty
  boundary consisting of a disjoint union of $h$ tori, denoted $T_1, \ldots, T_h.$

Given a presentation of $\pi_1(M)$ with a finite number, $n$, of generators,
$\gamma_1,{\ldots} ,\gamma_n$, introduce four affine coordinates (representing
matrix entries) for each generator, which are denoted by $g_{ij}$ for
$i=1,{\ldots} ,n$ and $j=1,2,3,4$. View $\R (M)$ as an affine algebraic set in
$\C^{4n}$ defined by an ideal $J$ in $\C[g_{11},{\ldots} ,g_{n4}]$. There are
elements $I_\gamma \in \C[g_{11},{\ldots} ,g_{n4}]$ for each $\gamma \in
\pi_1(M)$ such that $I_\gamma (\rho) = \tr \rho (\gamma)$ for each $\rho \in
\R (M)$.

Identify $\m_i$ and $\l_i$ with generators of $\im(\pi_1(T_i)\to\pi_1(M))$.
Thus, $\m_i$ and $\l_i$ are words in the generators for $\pi_1(M)$.
In the ring
$\C[g_{11},{\ldots},g_{n4}, m_1^{\pm 1},l_1^{\pm 1},{\ldots},m_h^{\pm 1}, l_h^{\pm 1}]$, we 
define the
following polynomial equations:
\begin{align}
\label{ev:cood m}  I_{\m_i}       &= m_i + m_i^{-1}, \\
\label{ev:cood l}  I_{\l_i}       &= l_i + l_i^{-1}, \\
\label{ev:cood ml} I_{\m_i\l_i} &= m_il_i + m_i^{-1}l_i^{-1},
\end{align}
for $i=1,{\ldots} ,h$. Let $\R_E(M)$ be the variety in $\C^{4n} \times
(\C-\{0\})^{2h}$ defined by $J$ together with the above equations. For each
$\rho \in \R(M)$, the equations (\ref{ev:cood m}--\ref{ev:cood ml}) have a
solution since commuting elements of $\SL$ always have a common invariant
subspace.  The natural projection $p_1: \R_E(M) \to \R(M)$ is therefore onto,
and $p_1$ is a dominating map.

If $(a_1,{\ldots} ,a_{4n}) \in \R (M)$, then there is an action of $\Z_2^{h}$
on the resulting points
\begin{equation*}
  (a_1,{\ldots} ,a_{4n}, m_1, l_1,{\ldots} ,m_h, l_h) \in \R_E(M)
\end{equation*}
by inverting both entries of a tuple $(m_i, l_i)$ to $(m_i^{-1}, l_i^{-1})$.
The group $\Z_2^{h}$ acts transitively on the fibres of $p_1$. The map $p_1$
is therefore finite--to--one with degree $\le 2^h$. The maximal degree is in
particular achieved when the interior of $M$ admits a complete hyperbolic
structure of finite volume, since Theorem~\ref{thurston thm} implies that
there are points in $\X_0(M) - \cup_{i=1}^h \{ I^2_{\m_i} = 4\}$.

The \emph{eigenvalue variety} $\Ei (M)$ is the closure of the image of
$\R_E(M)$ under projection onto the coordinates $(m_1,{\ldots} ,l_h)$. It is
therefore defined by an ideal of the ring $\C [m_1^{\pm 1}, l_1^{\pm 1},
\ldots , m_h^{\pm 1}, l_h^{\pm 1}]$ in $(\C - \{ 0\})^{2h}$.

Note that this construction factors through a variety $\X_E(M)$, which is the
character variety with its coordinate ring appropriately extended, since
coordinates of $\X (M)$ can be chosen such that $I_\gamma$ (as a function on
$\X (M)$) is an element of the coordinate ring of $\X (M)$ for each $\gamma
\in \pi_1(M)$.  Let $\te_E : \R_E(M) \to \X_E(M)$ be the natural quotient map,
which is equal to $\te: \R (M) \to \X (M)$ on the first $4n$ coordinates, and
equal to the identity on the remaining $2h$ coordinates.

There also is a restriction map $r: \X (M) \to \X(T_1) \times {\ldots} \times
\X(T_h)$, which arises from the inclusion homomorphisms $\pi_1(T_i) \to
\pi_1(M)$. Therefore denote the map $\X_E(M) \to \Ei (M)$ by $r_E$.
Denote the closure of the image of $r$ by $\X_\partial (M)$. There is the
following commuting diagram of dominating maps:
\begin{center}
$ 
\begin{CD}
      \R_E(M)    @>\te_E >>     \X_E(M)  @>r_E >> \Ei (M)   \\ 
       @Vp_1VV          @Vp_2VV         @Vp_3VV   \\
     \R(M)  @>\te >>     \X(M) @>r >> \X_\partial(M)
\end{CD}
$
\end{center}
Note that the maps $p_1,p_2,t$ and $t_E$ have the property that every point in
the closure of the image has a preimage, and that the maps $p_1, p_2$ and
$p_3$ are all finite--to--one of the same degree.

Recall the construction by Culler and Shalen. Starting with a curve $C \subset
\X (M)$ and an irreducible component $R_C$ in $\R (M)$ such that $\te (R_C) =
C$, one obtains the tautological representation $\P : \pi_1(M) \to SL_2(\C
(R_C))$. Let $R'_C$ be an irreducible component of $\R_E(M)$ with the property
that $p_1(R'_C) = R_C$. Then $\C (R'_C)$ is a finitely generated extension of
$\C(R_C)$, and $\P$ may be thought of as a representation $\P : \pi_1(M) \to
SL_2(\C (R'_C))$.

If $\overline{r_E\te_E(R'_C)}$ contains a curve $E$, then $\C (R'_C)$ is a
finitely generated extension of $\C (E)$, and essential surfaces can be
associated to each ideal point of $E$ using $\P$. Since the eigenvalue of at
least one peripheral element blows up at an ideal point of $E$, the associated
surfaces necessarily have non--empty boundary (see \cite{ccgls}).

Thus, if there is a closed essential surface associated to an ideal point of
$C$, then either $\overline{r_E\te_E(R'_C)}$ is 0--dimensional, or there is a
neighbourhood $U$ of an ideal point $\xi$ of $R'_C$ such that there are
(finite) points in $\overline{r_E\te_E(U)} - r_E\te_E(U)$. The later are
called \emph{holes in the eigenvalue variety}, examples of which are given in
\cite{tillus_kino}. 

The action of $\Z_2^{h}$
on the eigenvalue variety $\Ei (M)$ induces an action of $\Z_2^{h}$
on its
logarithmic limit set $\Ei_\infty (M)\subset S^{2h -1}$.  
The action is generated by taking $\xi = (x_1,{\ldots} ,x_{2i-1}, x_{2i}, {\ldots} ,x_{2h})
\in\Ei_\infty(M)$ to $(x_1,{\ldots} ,-x_{2i-1}, -x_{2i}, {\ldots} ,x_{2h})
\in\Ei_\infty(M)$ for $i=1,{\ldots} ,h$. Factoring by these symmetries, a
quotient of $\Ei_\infty (M)$ is obtained in the space $\RR
P^{2h-1}/\Z_2^{h-1} \cong S^{2h-1}$. The quotient map extends to a map $\varphi :
S^{2h -1} \to S^{2h -1}$ of spheres, which has degree $2^h$.  Let
\begin{equation*}
  P  = \begin{pmatrix} 0 & 1 \\ -1 & 0 \end{pmatrix}
\end{equation*}
and let $P_h$ be the block diagonal matrix with $h$ copies of $P$ on its
diagonal. Then $P_h$ is orthogonal, and its restriction to $S^{2h -1}$ is a
map of degree one.
The main result of \cite{tillus_ei} is:

\begin{lem}[Boundary slopes] \label{boundary slopes lemma}
  Let $M$ be an orientable, irreducible, compact 3--manifold with non--empty
  boundary consisting of a disjoint union of $h$ tori.  If $\xi \in \Ei_\infty
  (M)$ is a point with rational coordinate ratios, then there is an essential
  surface with boundary in $M$ whose projectivised boundary curve coordinate
  is $\varphi (P_h \xi)$. Moreover, the image $\varphi(P_h \Ei_\infty(M))$ is a closed subset of $\BC (M)$.
\end{lem}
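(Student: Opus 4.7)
The plan is to construct the essential surface by applying the Culler-Shalen machinery of \S\ref{sec:cs-theory} to a curve in $\X(M)$ lifted from a curve in $\Ei(M)$ through $\xi$, and then to match the resulting boundary slope with $\varphi(P_h\xi)$ by a direct valuation computation.

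First, since $\xi$ has rational coordinate ratios, I would invoke Lemma~\ref{curve finding lemma} to produce a curve $E \subset \Ei(M)$ with $\xi \in E_\infty$. Pulling back along the commuting diagram of \S\ref{boundary:Eigenvalue variety}, pick an irreducible component $R'_C$ of the preimage of $E$ in $\R_E(M)$ which dominates $E$, and let $R_C = p_1(R'_C)$ and $C = \te(R_C)$. The ideal point of $E$ corresponding to $\xi$ extends, via the finite extensions of function fields coming from the right column of that diagram, to an ideal point $\xi'$ of the smooth projective completion of $R'_C$. The tautological representation $\P : \pi_1(M) \to \text{SL}_2(\C(R'_C))$ together with the valuation $ord_{\xi'}$ gives an action of $\pi_1(M)$ on the associated Bruhat-Tits tree $\tree$. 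Because at least one coordinate of $\xi$ is non-zero, some peripheral eigenvalue has non-trivial valuation at $\xi'$, so the corresponding peripheral element fails to fix a vertex of $\tree$; Stallings' construction then yields an essential surface $S \subset M$ with non-empty boundary.

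Next I would track the boundary slope cusp by cusp. Write $\xi = (x_1, y_1, \ldots, x_h, y_h)$ with $(x_i, y_i)$ proportional to $(-ord_{\xi'}(m_i), -ord_{\xi'}(l_i))$. The standard Bass-Serre dictionary for a diagonalisable peripheral element shows that $\gamma = \m_i^a \l_i^b \in \pi_1(T_i)$ fixes a vertex of $\tree$ iff its eigenvalue $m_i^a l_i^b$ satisfies $a\,ord_{\xi'}(m_i) + b\,ord_{\xi'}(l_i) = 0$. Hence the boundary slope of $S$ on $T_i$ is the unique primitive $(a:b)$ with $a x_i + b y_i = 0$, namely $(a:b) = (y_i : -x_i)$, which is exactly the effect of the $i$th block of $P_h$ acting on $(x_i, y_i)$. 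The inversion symmetry $(m_i, l_i) \mapsto (m_i^{-1}, l_i^{-1})$ negates $(x_i, y_i)$ and leaves this projective slope invariant; quotienting by this $\Z_2^h$ action is precisely $\varphi$, so $\varphi(P_h \xi)$ coincides with the projectivised boundary curve coordinate of $S$.

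For the final assertion, $\Ei_\infty(M)$ is a finite union of rational convex spherical polytopes by Theorem~\ref{Bergman-Bieri-Groves}, hence compact in $S^{2h-1}$, and both $P_h$ and $\varphi$ are continuous, so the image $\varphi(P_h \Ei_\infty(M))$ is compact. Rational directions are dense in each polytope of $\Ei_\infty(M)$ and the first part of the lemma sends each one into $\BC(M)$; the closedness of $\BC(M)$ inside the ambient projective slope space then forces the whole image to lie in $\BC(M)$, where it is closed by compactness. The hard part will be the Bass-Serre step: one has to ensure that $\P(\gamma)$ can be diagonalised over a sufficiently large valued extension without distorting the valuations of its eigenvalues, and that the boundary of the Stallings dual surface on $T_i$ carries precisely the primitive element of $H_1(T_i)$ selected by the linear equation above. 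Once this slope identification is in hand, the remainder is the linear-algebra bookkeeping encoded in $P_h$ and $\varphi$.
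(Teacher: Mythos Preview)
The paper does not prove this lemma; it is quoted as the main result of \cite{tillus_ei}. Your sketch follows exactly the strategy outlined in the paragraph immediately preceding the lemma in the paper (curve in $\Ei(M)$, lift to $\R_E(M)$, tautological representation, Bass--Serre tree, then read off slopes from peripheral eigenvalue valuations), so at the level of approach you are aligned with the intended argument.

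Two technical points deserve more care. First, when you write ``pick an irreducible component $R'_C$ of the preimage of $E$ in $\R_E(M)$ which dominates $E$'', this is not automatic: $\Ei(M)$ is only the \emph{closure} of the image of the projection from $\R_E(M)$, and the curve $E$ produced by Lemma~\ref{curve finding lemma} could in principle lie in the boundary $\Ei(M)\setminus\mathrm{image}$. The clean fix is to work with valuations throughout: the rational point $\xi\in\Ei_\infty(M)$ gives a valuation on the function field of an irreducible component $\Ei_0$ of $\Ei(M)$, and since some component of $\R_E(M)$ dominates $\Ei_0$, this valuation extends to the function field of that component; then apply Lemma~\ref{curve finding lemma} upstairs rather than downstairs. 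Second, your closing argument invokes ``the closedness of $\BC(M)$ inside the ambient projective slope space'' as if it were obvious, but this is a genuine theorem (Floyd--Oertel branched surface theory shows $\BC(M)$ is a finite union of rational polyhedral cones) and should be cited. The Bass--Serre slope identification you flag as ``the hard part'' is, by contrast, entirely standard once the peripheral subgroup is upper-triangularised over the valued field.
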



\subsection{$\mathbf{\PSL}$--eigenvalue variety} 
\label{boundary:PSL-Eigenvalue variety}

Analogous to $\Ei(M)$, one can define a $\PSL$--eigenvalue variety $\PEi(M)$,
since the function $I^2_\gamma : \PX (M) \to \C$ defined by $I^2_\gamma
(\prho) = (\tr\prho(\gamma))^2$ is regular (i.e. polynomial) for all
$\gamma\in\pi_1(M)$ (see \cite{bozh}). Thus, a variety $\PR_E(M)$ is
constructed using the relations
\begin{align*}
  I^2_{\m_i} &= M_i + 2 + M_i^{-1}, \\
  I^2_{\l_i} &= L_i + 2+L_i^{-1}, \\
  I^2_{\m_i\l_i} &= M_iL_i + 2+M_i^{-1}L_i^{-1},
\end{align*}
for $i=1,{\ldots} ,h$.

Consider the representation of $\Z \oplus \Z$ into $\PSL$ generated by the
images of
\begin{equation} \label{four group}
    \begin{pmatrix} i&0 \\ 0&-i \end{pmatrix}
    \quad \text{and} \quad
    \begin{pmatrix} 0&1 \\ -1&0 \end{pmatrix}.
\end{equation}
In $\PSL$, the image of this representation is isomorphic to $\Z_2 \oplus
\Z_2$, but the image of any lift to $\SL$ is isomorphic to the quaternion
group $Q_8$ (in Quaternion group notation).  If $\prho \in \PR (M)$ restricts
to such an irreducible abelian representation on a boundary torus $T_i$, then
the equations $M_i + 2 + M_i^{-1}=0$, $L_i + 2+L_i^{-1}=0$, $M_iL_i +
2+M_i^{-1}L_i^{-1}=0$ have no solution. In particular, the projection
$\overline{p}: \PR_E(M) \to \PR(M)$ may not be onto.

The closure of the image of $\PR_E(M)$ onto the coordinates
$(M_1,L_1,{\ldots},M_h ,L_h)$ is denoted by $\PEi (M)$ and called the
\emph{$\PSL$-eigenvalue variety}. The relationship between ideal points of
$\PEi (M)$ and strongly detected boundary curves is the same as for the
$\SL$--version, since the proof of Lemma \ref{boundary slopes lemma} applies
without change:

\begin{lem} \label{psl boundary slopes lemma}
  Let $M$ be an orientable, irreducible, compact 3--manifold with non--empty
  boundary consisting of a disjoint union of $h$ tori.  If $\xi \in
  \PEi_\infty (M)$ is a point with rational coordinate ratios, then there is
  an essential surface with boundary in $M$ whose projectivised boundary curve
  coordinate is $\varphi (P_h \xi)$.
\end{lem}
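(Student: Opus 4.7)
The approach is to imitate the proof of Lemma \ref{boundary slopes lemma} with $\PSL$ in place of $\SL$, working throughout with the squared eigenvalue coordinates $M_i, L_i$. First I would apply Lemma \ref{curve finding lemma} to the given $\xi \in \PEi_\infty(M)$ with rational coordinate ratios, producing an irreducible curve $E \subset \PEi(M)$ with $\xi \in E_\infty$. Using the commuting diagram of dominating maps for the $\PSL$--version (obtained by barring everything in the diagram of \S\ref{boundary:Eigenvalue variety}), I would then pull $E$ back along $\overline{r}_E \circ \overline{t}_E$ to an irreducible curve $R'_E \subset \PR_E(M)$ whose image is dense in $E$. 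This yields a finitely generated field extension $\C(R'_E) \supset \C(E)$ and a tautological representation $\mathcal{P} : \pi_1(M) \to \PSL_2(\C(R'_E))$.

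Next, the ideal point $\xi$ of $E$ determines a discrete rank--one valuation $v$ on $\C(E)$, which extends (after replacing $\C(R'_E)$ by a finite extension if necessary) to a discrete valuation $\widetilde{v}$ on $\C(R'_E)$. Composing $\mathcal{P}$ with the action of $\PSL_2(\C(R'_E))$ on the Bruhat--Tits tree $\tree_{\widetilde{v}}$ gives an action of $\pi_1(M)$ on a tree; the fact that some eigenvalue coordinate $M_i$ or $L_i$ is unbounded at $\xi$ (because $\xi$ is an ideal point) guarantees that the stabiliser of no vertex contains all of $\pi_1(M)$, and in particular the action is non--trivial and without inversions. The Stallings/Culler--Shalen construction then yields an essential surface $S$ in $M$ dual to this action.

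To identify the boundary curves of $S$, I would analyse the restriction of $\mathcal{P}$ to each peripheral subgroup $\pi_1(T_i)$. Because $M_i + 2 + M_i^{-1} = I^2_{\m_i}$ and similarly for $L_i$ and $M_iL_i$, the squared eigenvalue of the element $\m_i^a \l_i^b$ has $\widetilde{v}$--valuation equal to $|a\, v(M_i) + b\, v(L_i)|$ up to sign. The peripheral element that fixes a vertex of $\tree_{\widetilde{v}}$, and therefore gives the boundary slope of $S \cap T_i$, is characterised by $a\, v(M_i) + b\, v(L_i) = 0$. This is exactly the slope $(-v(L_i) : v(M_i))$, which in the coordinates of Bergman's limit set is $\varphi(P_h\xi)|_{T_i}$, since $P_h$ implements the $90^\circ$ rotation $(v(M_i), v(L_i)) \mapsto (v(L_i), -v(M_i))$ on each two--torus factor and $\varphi$ identifies antipodal pairs in accordance with the $\Z_2^h$--action.

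The main obstacle is the final bookkeeping step: carefully verifying that the valuations $v(M_i), v(L_i)$ at the ideal point $\xi$ produce precisely the projectivised boundary curve coordinate $\varphi(P_h\xi)$ after passing through the Stallings construction, and that the potential degeneracy of $\mathcal{P}|_{\pi_1(T_i)}$ to the abelian representation into the Klein four--group (generated by the images of \eqref{four group}) does not cause the construction to fail. The first concern is handled exactly as in the proof of Lemma~\ref{boundary slopes lemma} since the relations defining $\PR_E(M)$ differ from those defining $\R_E(M)$ only by replacing eigenvalues with their squares, which does not affect projective ratios. The second concern does not arise at an ideal point $\xi$, because at such a representation $M_i + 2 + M_i^{-1} = 0$ forces $M_i = -1$, a \emph{finite} nonzero value, so the component of $\PR_E(M)$ supplying the tautological representation can be chosen disjoint from this locus; this is precisely the statement that the argument for Lemma~\ref{boundary slopes lemma} applies without change.
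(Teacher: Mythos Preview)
Your proposal is correct and takes essentially the same approach as the paper: the paper's entire proof is the sentence preceding the lemma, namely that ``the proof of Lemma~\ref{boundary slopes lemma} applies without change,'' and your sketch simply unpacks what that proof (from \cite{tillus_ei}) looks like in the $\PSL$ setting. Your attention to the Klein four--group obstruction is a nice touch, though strictly speaking it is not needed for the argument, since the curve $E$ already lies in $\PEi(M)$ and the tautological representation is built from a component of $\PR_E(M)$ dominating it---the obstruction only concerns whether $\overline{p}\co \PR_E(M)\to\PR(M)$ is surjective, which plays no role here.
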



\section{The deformation variety and normal surfacess}
\label{sec:tillus_defo}

This section summarises the approach to Culler-Morgan-Shalen theory using ideal triangulations, spun-normal surfaces and tropical geometry given in \cite{defo}.
Throughout this section, $M$ is the interior of an orientable, compact, connected 3--manifold with non-empty boundary consisting of a pairwise disjoint union of tori.


\subsection{Ideal triangulations and shape parameters}
\label{combinatorics:Deformation variety}

An ideal (topological) triangulation $\tri$ of $M$ is a triple $\tri = (\widetilde{\Delta}, \Phi, h),$ where 
\begin{enumerate}
\item $\widetilde{\Delta} = \bigcup_{k=1}^{n} \widetilde{\Delta}_k$ is a pairwise disjoint union of oriented standard Euclidean 3--simplices, 
\item $\Phi$ is a collection of orientation reversing Euclidean isometries between the 2--simplices in $\widetilde{\Delta};$ termed \emph{face pairings},
\item $h\co (\widetilde{\Delta} \setminus \widetilde{\Delta}^{(0)} )/ \Phi \to M$ is a homeomorphism, where the domain is given the quotient topology. 
\end{enumerate}
We write $p \co (\widetilde{\Delta} \setminus \widetilde{\Delta}^{(0)} ) \to M$ for the composition of the natural quotient map with $h,$ and note that $M$ is determined by $\tri.$

The associated \emph{pseudo-manifold} (or \emph{end-compactification} of $M$) is $P = \widetilde{\Delta} / \Phi$. We denote the quotient map with the same letter, $p\co \widetilde{\Delta} \to P$.
Denote $\Sigma^k$ the set of all (possibly singular) $k$--simplices in $P.$ The degree $\deg(\sigma^k)$ of a $k$--simplex in $\Sigma^k$ is the number of $k$--simplices in its preimage in $\widetilde{\Delta}.$

An \emph{ideal $k$--simplex} is a $k$--simplex with its vertices removed. The vertices of the $k$--simplex are termed the \emph{ideal vertices} of the ideal $k$--simplex. Similar for singular simplices. The standard terminology of (ideal) edges, (ideal) faces and (ideal) tetrahedra will be used for the (ideal) simplices of dimensions one, two, and three respectively.

Let $\Delta^3$ be the standard 3--simplex with a chosen orientation. Suppose the edges $\Delta^3$ are labeled by $z,$ $z'$ and $z''$ so that opposite edges have the same labeling and all labels are used. Then the cyclic order of $z,$ $z'$ and $z''$ viewed from each vertex depends only on the orientation of the 3--simplex. It follows that, up to orientation preserving symmetries, there are two possible labelings, and we fix one of these labelings. 

Suppose $\Sigma^3 = \{ \sigma_1, \ldots, \sigma_n\}.$ Since $M$ is orientable, the 3--simplices in $\Sigma^3$ may be oriented coherently. For each $\sigma_i \in \Sigma^3,$ fix an orientation preserving simplicial map $f_i\co \Delta^3 \to \sigma_i.$ An Euler characteristic argument shows that $|\Sigma^3| = |\Sigma^1|.$ Let $\Sigma^1 = \{ e_1, \ldots, e_n\},$ and let $a^{(k)}_{ij}$ be the number of edges in $f_i^{-1}(e_j)$ which have label $z^{(k)}.$

For each $i \in \{1,\ldots, n\},$ define
\begin{equation}\label{eq:para}
    p_i   = z_i (1 - z''_i) - 1,\quad
    p'_i  = z'_i (1 - z_i) - 1,\quad
    p''_i = z''_i (1 - z'_i) - 1,
\end{equation}
and for each $j \in \{1,\ldots, n\},$ let
\begin{equation}\label{eq:glue}
    g_j = \prod_{i=1}^n z_i^{a_{ij}} {(z'_i)}^{a'_{ij}} {(z''_i)}^{a''_{ij}}
    -1. 
\end{equation}
Setting $p_i=p'_i=p''_i = 0$ gives the \emph{parameter equations}, and setting $g_j=0$ gives the \emph{hyperbolic gluing equations}. For a discussion and geometric interpretation of these equations, see \cite{t, nz}. The parameter equations imply that $z_i^{(k)} \neq 0,1.$

\begin{defn}
Suppose $\tri$ is an ideal triangulation of $M.$
The \emph{deformation variety $\D (\tri)$} is the variety in $(\C-\{0 \})^{3n}$ defined by the parameter equations and the hyperbolic gluing equations.
\end{defn}


\subsection{Developing maps, characters and eigenvalues}
\label{sec: developing}
\label{sec: Holonomies and eigenvalue variety}

The following facts can be found in \cite[\S5]{y}; see also \cite{ST} for a more detailed discussion.

Given $Z \in \D (\tri),$ each ideal tetrahedron in $M$ has edge labels which can be lifted equivariantly to $\widetilde{M}.$ Following \cite{y}, we define a continuous map $\dev_Z\co \widetilde{M} \to \H^3,$ which maps every ideal tetrahedron $\sigma$ in $\widetilde{M}$ to an ideal hyperbolic 3--simplex $\Delta(\sigma),$ such that the labels carried forward to the edges of $\Delta(\sigma)$ correspond to the shape parameters of $\Delta(\sigma)$ determined by its hyperbolic structure; see \cite{t} for the geometry of hyperbolic ideal tetrahedra. If each shape parameter has positive imaginary part, then the associated
hyperbolic ideal tetrahedron is positively oriented and $\dev_Z$ is a developing map for a (possibly incomplete) hyperbolic structure on $M.$

For each $Z \in \D (\tri),$ the Yoshida map $\dev_Z$ can be used to define a representation $\rho_Z \co \pi_1(M) \to \PSL$ as follows (see \cite{y}). A representation into $\PSL$ is an action on $\H^3,$ and this is the unique representation which makes $\dev_Z$ $\pi_1(M)$--equivariant: $\dev_Z(\gamma \cdot x) = \rho_Z(\gamma) \dev_Z(x)$ for all $x \in \widetilde{M},$ $\gamma \in \pi_1(M).$  Thus, $\rho_Z$ is well--defined up to conjugation, since it only depends upon the choice of the embedding of the initial tetrahedron $\sigma.$ This yields a well--defined map $\chi_{\tri}\co \D (\tri) \to \PX (N)$ from the deformation variety to the $\PSL$--character variety. It is implicit in \cite{nr} that $\chi_{\tri}$ is algebraic; see \cite{ac} for details using a faithful representation of $\PSL \to SL(3, \C).$ Note that the image of each peripheral subgroup under $\rho_Z$ has at least one fixed point on the sphere at infinity. We summarise this discussion in the following lemma:

\begin{lem}
Let $M$ be the interior of a compact, connected, orientable 3--manifold with non-empty boundary consisting of a pairwise disjoint union of tori, and let $\tri$ be an ideal triangulation of $M.$ For each $Z \in \D(\tri),$ there exists a representation $\prho_Z \co \pi_1(M) \to \PSL$ and a $\prho_Z$--equivariant, continuous map $D_Z \co \widehat{P} \to \overline{\H}^3$ with the property that for each ideal tetrahedron $\sigma \subset \widetilde{M},$ the image $D_Z(\sigma)$ is a hyperbolic ideal tetrahedron with edge invariants determined by $Z.$ Moreover, $\prho_Z$ is well-defined up to conjugacy and the well-defined map 
$$\chi_\tri \co \D(\tri) \to \PX (M)$$
is algebraic.
\end{lem}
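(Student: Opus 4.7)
The strategy is the classical one going back to Thurston and Neumann--Zagier: construct $D_Z$ by inductively developing the ideal tetrahedra of $\widehat{P}$ into $\overline{\H}^3$ with the prescribed shape parameters. Fix a base lift $\widetilde{\sigma}_0 \subset \widehat{P}$ of some $\sigma_i \in \Sigma^3$, and choose an orientation-preserving realisation $D_Z|_{\widetilde{\sigma}_0}$ of $\widetilde{\sigma}_0$ as an ideal hyperbolic tetrahedron in $\overline{\H}^3$ whose edge invariants (in the labelling of $\Delta^3$) are $z_i, z'_i, z''_i$. Extend tetrahedron by tetrahedron: given a face shared by a developed tetrahedron and an undeveloped one whose assigned shape parameter is prescribed by $Z$, there is a unique way to place the new tetrahedron in $\overline{\H}^3$ so that its ideal vertices include the three already placed and so that its edge invariants match those dictated by $Z$. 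Iterating along paths in the dual $1$--skeleton of $\widehat{P}$ defines $D_Z$ on the full $3$--skeleton.

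The only possible obstruction to the well-definedness of this procedure lies at edges of $\widehat{P}$: different paths from $\widetilde{\sigma}_0$ to a given tetrahedron differ by loops, which in turn are generated by loops encircling edges. Around an edge $e_j$ the holonomy is a loxodromic element of $\PSL$ fixing the two endpoints of the common edge image in $\overline{\H}^3$, and its loxodromic multiplier is exactly
\begin{equation*}
   \prod_{i=1}^n z_i^{a_{ij}} (z_i')^{a'_{ij}} (z_i'')^{a''_{ij}}.
\end{equation*}
The parameter equations $p_i = p_i' = p_i'' = 0$ force $z_i' = 1/(1-z_i)$ and $z_i'' = (z_i-1)/z_i$, i.e.\ the three edge invariants of $\sigma_i$ are standard Thurston shape parameters of a single ideal hyperbolic tetrahedron, and the gluing equation $g_j = 0$ says the multiplier above equals $1$. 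Hence the edge holonomy is trivial, so $D_Z$ extends to a continuous, locally injective map on the whole $3$--skeleton of $\widehat{P}$, and extends canonically to the $0$--skeleton by sending ideal vertices to the appropriate points of $\partial \H^3$. I expect that verifying this holonomy calculation, keeping careful track of orientations and of which of $z$, $z'$, $z''$ is attached to which edge in the universal cover, will be the main bookkeeping obstacle.

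The covering group of $\widehat{P} \to P$ restricted to the preimage of $M$ is $\pi_1(M)$. For $\gamma \in \pi_1(M)$ and any tetrahedron $\widetilde{\sigma}$, the two developed tetrahedra $D_Z(\widetilde{\sigma})$ and $D_Z(\gamma \cdot \widetilde{\sigma})$ carry identical edge labels, so there is a unique $\prho_Z(\gamma) \in \PSL$ mapping one to the other preserving the labelling. This yields a homomorphism $\prho_Z \co \pi_1(M) \to \PSL$ making $D_Z$ equivariant; replacing the initial embedding of $\widetilde{\sigma}_0$ by its composition with $A \in \PSL$ replaces $D_Z$ by $A \circ D_Z$ and conjugates $\prho_Z$ by $A$, giving the claimed well-definedness up to conjugacy.

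Finally, for algebraicity of $\chi_\tri$, choose generators $\gamma_1, \ldots, \gamma_N$ of $\pi_1(M)$ and realise each $\prho_Z(\gamma_k)$ as a finite product of face-pairing isometries of $\overline{\H}^3$. Each such face-pairing, computed in a fixed normalisation of $\widetilde{\sigma}_0$, is an element of $\PSL$ whose matrix entries lie in $\C[z_i^{\pm 1}, (1-z_i)^{\pm 1}]$; on $\D(\tri)$ both $z_i$ and $1-z_i$ are invertible (by the parameter equations), so these entries are regular on $\D(\tri)$. Consequently each trace-squared function $I^2_{\gamma}(\prho_Z) = (\tr \prho_Z(\gamma))^2$ pulls back to a regular function on $\D(\tri)$, and by Boyer--Zhang \cite{bozh} such functions generate the coordinate ring of $\PX(M)$. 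Hence $\chi_\tri$ is algebraic.
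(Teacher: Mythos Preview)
Your argument is correct and is exactly the Yoshida/Thurston construction that the paper summarises and cites (\cite{y, ST, nr, ac}) rather than proves in full. One small imprecision worth fixing: the face-pairing isometries, when lifted to $2\times 2$ matrices with determinant $1$, pick up a square root of a rational function in the shape parameters (this is visible in the explicit Whitehead computations later in the paper), so their individual ``matrix entries'' are not literally in $\C[z_i^{\pm 1},(1-z_i)^{\pm 1}]$; what \emph{is} regular on $\D(\tri)$ is the squared trace $(\tr\prho_Z(\gamma))^2$, and that is what you need. The paper's preferred citation for algebraicity (Champanerkar~\cite{ac}) avoids this bookkeeping by composing with the faithful algebraic homomorphism $\PSL\to SL(3,\C)$, after which genuine matrix entries and traces are regular functions of~$Z$.
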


For cusped hyperbolic 3--manifolds, there are a number of special facts that are well-known to follow from work in \cite{t, nz}. The following highlights the fact that the deformation variety facilitates the study of the Dehn surgery components in the character variety, i.e.\thinspace the components containing the characters of discrete and faithful representations.

\begin{thm}
Let $M$ be an orientable, connected, cusped hyperbolic 3--manifold. Let $\tri$ be an ideal triangulation of $M$ with the property that all edges are essential. Then there exists $Z \in \D(\tri),$ such that $\prho_Z\co \pi_1(M)\to \PSL$ is a discrete and faithful representation. Moreover, the whole {Dehn surgery component} $\PX_0(M)$ containing the character of $\prho_Z$ is in the image of $\chi_\tri.$
\end{thm}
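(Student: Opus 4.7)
The plan is a two-stage argument: first produce a distinguished point $Z \in \D(\tri)$ whose associated representation is the complete hyperbolic holonomy; second, use Thurston's deformation theory to promote this single preimage to a statement about the whole Dehn surgery component.

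For the first stage, let $\prho_0 \co \pi_1(M) \to \PSL$ be a discrete faithful holonomy of the complete structure. Fix a base ideal tetrahedron $\widetilde{\sigma}_0 \subset \widetilde{M}$ and develop equivariantly to build a $\prho_0$-equivariant map $D_0 \co \widetilde{M} \to \overline{\H}^3$ that sends each ideal tetrahedron of $\widetilde{M}$ to an ideal hyperbolic simplex. This is where the essential-edges hypothesis enters: each edge of $\tri$ lifts in the universal cover to an arc whose two endpoints develop to \emph{distinct} points of $\partial \H^3$, so every developed tetrahedron is non-degenerate. Reading off the three cross-ratio shape parameters at the edges of each tetrahedron produces a candidate $Z \in (\C \setminus \{0,1\})^{3n}$. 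The parameter equations \eqref{eq:para} record the algebraic relation among the three shape parameters of a single hyperbolic ideal simplex and so hold automatically; the gluing equations \eqref{eq:glue} hold because the cycle of developed tetrahedra around each edge of $P$ closes up without translational or rotational monodromy in the genuine complete structure. Hence $Z \in \D(\tri)$, and the uniqueness clause of the preceding lemma identifies $\prho_Z$ with $\prho_0$ up to conjugacy, giving $\chi_\tri(Z) = \chi_{\prho_0} \in \PX_0(M)$.

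For the second stage, I would invoke Thurston's hyperbolic Dehn surgery theorem, which provides a local biholomorphism between a neighbourhood of $Z$ in $\D(\tri)$ and a neighbourhood of $\chi_{\prho_0}$ in $\PX_0(M)$; the two sides both have complex dimension $h$ by Theorem~\ref{thurston thm}, and the local inverse is given by re-computing shape parameters from the cusp holonomies of a nearby representation. Let $\D_0$ denote the irreducible component of $\D(\tri)$ through $Z$. The image $\chi_\tri(\D_0)$ is then a constructible (by Chevalley) subset of $\PX_0(M)$ containing a non-empty Zariski-open set, so, by irreducibility of $\PX_0(M)$, it is Zariski-dense.

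The main obstacle is to upgrade this Zariski density to set-theoretic containment: one must exclude the possibility that some character $\chi \in \PX_0(M)$ is only approached by $\chi_\tri(Z_k)$ with $Z_k$ escaping to an ideal point of $\D(\tri)$. I would rule this out using the framework of \cite{defo} summarised in \S\ref{combinatorics:Logarithmic limit set}: an ideal point of $\D(\tri)$ produces a non-trivial action of $\pi_1(M)$ on a tree and hence a non-trivial transversely measured lamination in $M$; any limiting character obtained this way must have some trace-function unbounded along the ideal point, which, combined with the dimension and irreducibility of $\PX_0(M)$, contradicts the fact that a limit along a Dehn surgery deformation has only peripheral traces blowing up under control of Thurston's surgery parameters. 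Consequently a subsequence of $\{Z_k\}$ converges in $\D(\tri)$, its limit maps to $\chi$, and $\PX_0(M) \subseteq \chi_\tri(\D(\tri))$ as required.
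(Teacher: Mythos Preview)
The paper does not supply a proof of this theorem; it is stated as a known consequence of work of Thurston \cite{t} and Neumann--Zagier \cite{nz}, with the pseudo-developing-map formalism fleshed out in \cite{ST, defo}. So there is no in-paper argument to compare against line by line.

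Your first stage is the standard construction and is correct. Essentiality of an edge is precisely the condition that, under the discrete faithful holonomy, its two ideal endpoints develop to distinct points of $\partial\H^3$; hence all four vertices of each lifted tetrahedron are distinct, the cross-ratio shape parameters lie in $\C\setminus\{0,1\}$, and both the parameter and gluing relations hold.

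Your second stage has a genuine gap. The passage from Zariski density to set-theoretic surjectivity rests on the assertion that an ideal point of $\D(\tri)$ forces some trace function to be unbounded. This is false, and the counterexample is worked out later in this very paper: the \emph{fake ideal point} $V_0 \in \D_\infty(\tri_\whl)$, approached for instance along $(w,x,y,z)=(w,w^{-1},w^{-1},w)$ with $w\to 1$, has all shape parameters degenerating while the associated characters converge to \emph{finite} reducible characters on $\PX_0(\whl)$; see \S\ref{whl:surfaces arising from degenerations} and Appendix~\ref{whl:eigenvalues at (1,1,1,1)}. Indeed the paper remarks, immediately after Theorem~\ref{comb:essential prop}, that the general relationship between ideal points of $\D(\tri)$ and ideal points of the character variety is an open question. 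So you cannot extract a contradiction from the hypothesis that $Z_k$ escapes to $\D_\infty(\tri)$ while $\chi_\tri(Z_k)$ stays bounded, and your surjectivity argument collapses.

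The route in the cited references is not a limiting argument but a direct one: for each $\prho$ with character on $\PX_0(M)$ one constructs a $\prho$-equivariant pseudo-developing map $\widetilde{M}\to\overline{\H}^3$ and reads off shape parameters, invoking the essential-edges hypothesis again (now for $\prho$, not just for $\prho_0$) to control non-degeneracy. This is the content of \cite{ST}; your compactness argument does not substitute for it.
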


Denote $C$ a compact core of $M;$ this is obtained by removing a small open regular neighbourhood from each vertex in $P,$ such that $C$ inherits a decomposition into truncated tetrahedra. Each boundary torus $T_i,$ $i=1,{\ldots} ,h,$ of $C$ inherits a triangulation $\tri_i$ induced by $\tri.$ Let $\gamma$ be a closed simplicial path on $T_i.$ In
\cite{nz}, the \emph{holonomy} $\mu(\gamma)$ is defined as $(-1)^{|\gamma|}$ times the product of the moduli of the triangle vertices touching $\gamma$ on the right, where $|\gamma|$ is the number of 1--simplices of $\gamma,$ and the moduli asise from the corresponding edge labels.

At $Z \in \D (\tri),$ evaluating $\mu(\gamma)$ gives a complex number $\mu_Z(\gamma) \in \C \setminus\{0\}.$ It is stated in \cite{t, nz}, that $\mu_Z(\gamma)$ is the square of an eigenvalue; one has: 
$$(\tr \prho_Z (\gamma))^2 = \mu_Z(\gamma) + 2 + \mu_Z(\gamma)^{-1}.$$ 
This can be seen by putting a common fixed point of $\prho_Z (\pi_1(T_i))$ at infinity in the upper--half space model, and writing
$\prho_Z(\gamma)$ as a product of M\"obius transformations, each of
which fixes an edge with one endpoint at infinity and takes one face
of a tetrahedron to another. 

Choose a basis $\{\m_i, \l_i\}$ of $\pi_1(T_i)\cong H_1(T_i)$ for each boundary torus $T_i.$  Since $\mu_Z\co \pi_1(T_i) \to \C\setminus \{0\}$ is a homomorphism for each $i=1,{\ldots} ,h,$ there is a well--defined rational map:
\begin{equation*}
    \pee \co \D (\tri) \to (\C-\{0\})^{2h} \qquad
    \pee(Z) = (\mu_Z (\m_1),{\ldots} ,\mu_Z (\l_h )).
\end{equation*}
The closure of its image is contained in the \emph{$\PSL$--eigenvalue variety} $\PEi (M)$ of \cite{tillus_ei}.


\subsection{Ideal points}
\label{sec:Ideal points and normal surfaces}

Since $\D (\tri)$ is a variety in $(\C \setminus\{ 0\})^{3n},$ Bergman's construction in \cite{gb} can be used to define its set of ideal points. Let
\begin{align*}
  Z &= (z_1, z'_1, z''_1, z_2, {\ldots} ,z''_n),\\
  \log |Z| &= (\log |z_1|, \ldots , \log |z''_n|),\\
  u(Z) &= \frac{1}{\sqrt{1 + (\log |z_1|)^2 + {\ldots} + (\log |z''_n|)^2}}.
\end{align*}
The map $\D (\tri) \to B^{3n}$ defined by $Z \to u(Z)\log |Z|$ is continuous, and the \emph{logarithmic limit set} $\D_\infty (\tri)$ is the set of limit points on $S^{3n-1}$ of its image. Thus, for each 
$\xi \in \D_{\infty}(M)$ there is a sequence $\{ Z_i\}$ in $\D (M)$ such that
\begin{equation*}
   \lim_{i\to \infty} u(Z_i) \log |Z_i| = \xi.
\end{equation*}
The sequence $\{ Z_i\}$ is said to \emph{converge} to $\xi,$ written $Z_{i} \to \xi,$
and $\xi$ is called an ideal
point of $\D (\tri).$ Whenever an
edge invariant of a tetrahedron converges to one, the other two edge
invariants ``blow up''. Thus, an ideal point of $\D (\tri)$ is approached
if and only if a tetrahedron degenerates.

Since the Riemann sphere is compact, there is a subsequence, also denoted by $\{ Z_i\},$ with the property that each shape parameter converges in $\C \cup \{ \infty \}.$ In this case $\{ Z_i\}$ is said to \emph{strongly converge} to $\xi.$ If $\xi$ has rational coordinate ratios, a strongly convergent sequence may be chosen on a curve in $\D (\tri)$ according to Lemma~\ref{curve finding lemma}.


\subsection{Normal surface theory}

A \emph{normal surface} in a (possibly ideal) triangulation
$\tri$ is a properly embedded surface which intersects each
tetrahedron of $\tri$ in a pairwise disjoint collection of
\emph{triangles} and \emph{quadrilaterals}, as shown in Figure~\ref{normaldiscs}. These triangles and quadrilaterals are called
\emph{normal discs}. In an ideal triangulation of a non-compact
3--manifold, a normal surface may contain infinitely many triangles;
such a surface is called \emph{spun-normal} \cite{tillmann08-finite}.
A normal surface may be disconnected or empty.

\begin{figure}[h!]  
\centering
\includegraphics[scale=1]{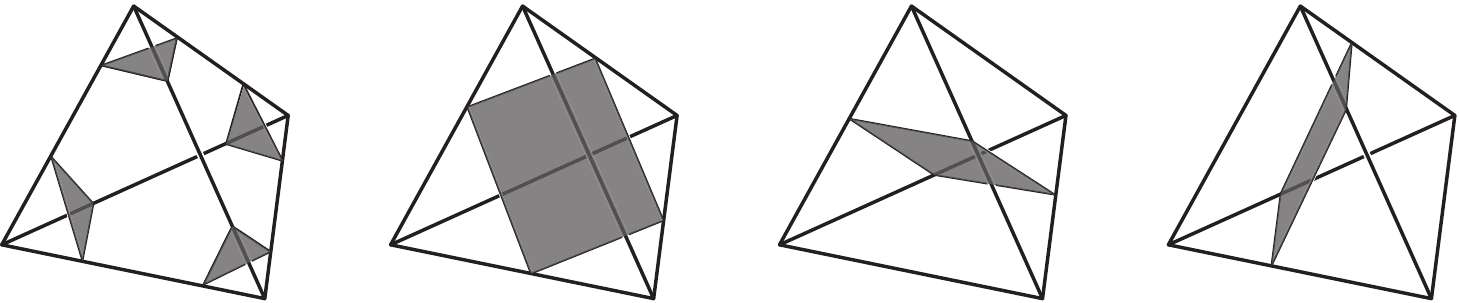}
\caption{The seven types of normal disc in a tetrahedron.} 
\label{normaldiscs}
\end{figure}

We now describe an algebraic approach to normal surfaces.
The key observation is that each normal surface contains
finitely many quadrilateral discs, and is uniquely determined
(up to normal isotopy) by these quadrilateral discs. Here a \emph{normal
isotopy} of $M$ is an isotopy that keeps all simplices of all dimensions
fixed. Let $\square$ denote the set of all normal isotopy classes
of normal quadrilateral discs in $\tri$. A normal quadrilateral disc is up to normal
isotopy uniquely determined by a pair of opposite edges of a tetrahedron.
Hence $|\square| = 3n$ where $n$ is the number of tetrahedra in $\tri$.
These normal isotopy classes are called quadrilateral \emph{types}.

We identify $\RR^\square$ with $\RR^{3n}.$
Given a normal surface $S,$ let $x(S) \in \RR^\square = \RR^{3n}$
denote the integer vector for which each coordinate $x(S)(q)$
counts the number of quadrilateral discs in $S$ of type $q \in \square$.
This \emph{normal $Q$--coordinate} $x(S)$ satisfies the
following two algebraic conditions.

First, $x(S)$ is admissible.
A vector $x \in \RR^\square$ is \emph{admissible} if
$x \ge 0$, and for each tetrahedron $x$ is non-zero
on at most one of its three quadrilateral types. 
This reflects the fact that an embedded surface cannot contain
two different types of quadrilateral in the same tetrahedron.

Second, $x(S)$ satisfies a linear equation for each interior
edge in $M,$ termed a \emph{$Q$--matching equation}.
Intuitively, these equations arise from the fact that as one
circumnavigates the earth, one crosses the equator from north to south
as often as one crosses it from south to north.
We now give the precise form of these equations.
To simplify the discussion,
we assume that $M$ is oriented and all tetrahedra are given
the induced orientation; see \cite[Section~2.9]{tillmann08-finite} for
details. These are easiest to describe with respect to a model for the neighbourhood of $e$ in $M,$ called its \emph{abstract neighbourhood}.

\begin{figure}[h]
    \centering
    \subfigure[The abstract neighbourhood $B(e)$]{%
        \label{fig:matchingquadbdry}%
        \includegraphics[scale=1.1]{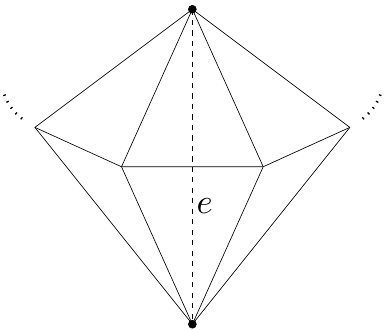}}
    \qquad
    \subfigure[Positive slope $+1$]{%
        \label{fig:matchingquadpos}%
        \includegraphics[scale=1.1]{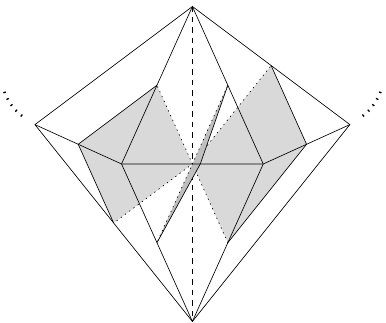}}
    \qquad
    \subfigure[Negative slope $-1$]{%
        \label{fig:matchingquadneg}%
        \includegraphics[scale=1.1]{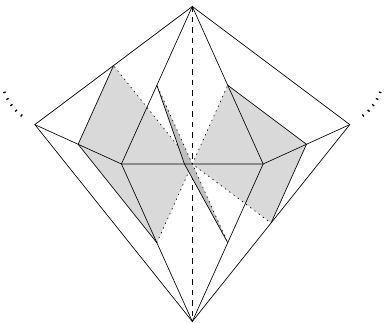}}
    \caption{Slopes of quadrilaterals}
    \label{fig:slopes}
\end{figure}

The \emph{degree of an edge} $e$ in $P,$ $\deg(e),$ is the number of 1--simplices in $\widetilde{\Delta}$ which map to $e.$ Given the edge $e$ in $P,$ there is an associated \emph{abstract neighbourhood $B(e)$} of $e.$ This is a ball triangulated by $\deg (e)$ 3--simplices, having a unique interior edge $\widetilde{e},$ and there is a well-defined simplicial quotient map $p_{e}\co B(e)\to P$ taking $\widetilde{e}$ to $e.$ This abstract neighbourhood is obtained as follows.

If $e$ has at most one pre-image in each 3--simplex in $\widetilde{\Delta},$ then $B(e)$ is obtained as the quotient of the collection $\widetilde{\Delta}_{e}$ of all 3--simplices in $\widetilde{\Delta}$ containing a pre-image of $e$ by the set $\Phi_{e}$ of all face pairings in $\Phi$ between faces containing a pre-image of $e.$ There is an obvious quotient map $b_{e}\co B(e)\to P$ which takes into account the remaining identifications on the boundary of $B(e).$

If $e$ has more than one pre-image in some 3--simplex, then multiple copies of this simplex are taken, one for each pre-image. The construction is modified accordingly, so that $B(e)$ again has a unique interior edge and there is a well defined quotient map $b_{e}\co B(e)\to P.$ Complete details can be found in \cite{tillus_normal}, Section 2.3.

Let $\sigma$ be a
tetrahedron in $B(e)$. The boundary square of a normal
quadrilateral of type $q$ in $\sigma$ meets the equator of $\partial
B(e)$ if and only it has a vertex on $e$. In this case, it has a slope
$\pm1$ of a well--defined sign on $\partial B(e)$ which is independent of the
orientation of $e$. Refer to Figures~\ref{fig:matchingquadpos} and
\ref{fig:matchingquadneg}, which show quadrilaterals with
\emph{positive} and \emph{negative slopes} respectively.

Given a quadrilateral type $q$ and an edge $e,$ there is a
\emph{total weight} $\wt_e(q)$ of $q$ at $e,$ which records the sum of
all slopes of $q$ at $e$ (we sum because $q$ might meet $e$ more than
once, if $e$ appears as multiple edges of the same tetrahedron).
If $q$ has no corner on $e,$ then we set
$\wt_e(q)=0.$ Given edge $e$ in $M,$ the $Q$--matching equation of $e$
is then defined by $0 = \sum_{q\in \square}\; \wt_e(q)\;x(q)$.

\begin{thm}\label{thm:admissible integer solution gives normal}
For each $x\in \RR^\square$ with the properties that $x$ has integral coordinates, $x$ is admissible and
$x$ satisfies the $Q$--matching equations, there is a (possibly
non-compact) normal surface $S$ such that $x = x(S).$ Moreover, $S$ is
unique up to normal isotopy and adding or removing vertex linking surfaces,
i.e., normal surfaces consisting entirely of normal triangles.
\end{thm}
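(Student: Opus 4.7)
The plan is to reconstruct $S$ tetrahedron-by-tetrahedron from the quadrilateral data, then extend across edges, and finally spin triangles toward the ideal vertices. Admissibility tells us that in each tetrahedron $\sigma_i$ at most one quadrilateral type $q_i$ has $x(q_i)>0$, so start by placing $x(q_i)$ parallel, disjoint, normally isotopic copies of a quadrilateral of type $q_i$ inside $\sigma_i$. These quadrilaterals have arcs on each face of $\sigma_i$, and their placement naturally partitions the four vertex-corners of $\sigma_i$ into ``stacks'' of would-be triangles: for each ideal vertex $v$ of $\sigma_i$, the region between that vertex and the closest quadrilateral is already a tower of triangles, while the region beyond the quadrilaterals is empty. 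The only remaining freedom is how many parallel normal triangles of each triangle type $t$ to stack between the vertex and the outermost quadrilateral of the tetrahedron.

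Next, extend across faces and around edges. Across a face of $\tri$, the two incident tetrahedra each contribute finitely many arcs from quadrilaterals (whose number is forced by $x$) plus some arcs from triangles; demanding that arcs match up along the face only constrains the triangle counts, and the face pairings identify these counts in pairs. The key point is then the edge condition: around each interior edge $e$, walking once around the abstract neighbourhood $B(e)$, the slopes $+1$ and $-1$ of the quadrilaterals incident to $e$ are exactly what the $Q$--matching equation at $e$ records. The equation $0 = \sum_q \wt_e(q)x(q)$ says that along the equator of $\partial B(e)$ the net vertical displacement is zero, which is precisely the consistency condition for completing the partial triangles around $e$ into honest normal triangles whose counts match from tetrahedron to tetrahedron as one circles $e$. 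Thus the $Q$--matching equations at all interior edges are exactly what is needed to choose the triangle coordinates so that local pieces assemble into an embedded normal surface.

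For the non-compact (spun) case, since $M$ has torus cusps the only vertices of the pseudo-manifold $P$ are ideal, and the ``degrees of freedom'' left over after fixing the triangle counts by the edge matching condition correspond precisely to how many vertex-linking tori to add at each cusp. If the quadrilateral pattern has non-zero algebraic boundary on a cusp torus $T_i$ (computed from the slopes on $\partial B(e)$ as $e$ varies among edges hitting that cusp), then there is no finite way to close up the triangles near that cusp, and one is forced to spin infinitely many triangles along a direction on $T_i$; this produces a properly embedded (but non-compact) surface. If the algebraic boundary on $T_i$ is zero, the triangles can be truncated after finitely many steps, and the residual freedom is addition or removal of a vertex-linking torus.

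The uniqueness statement then follows since any two normal surfaces $S, S'$ with $x(S) = x(S') = x$ have the same quadrilaterals in each tetrahedron up to normal isotopy, so $S$ and $S'$ differ only in triangle counts; the triangle-only difference is by Theorem hypotheses itself a normal surface whose $Q$--coordinate is $0$, hence consists entirely of normal triangles, hence is a disjoint union of vertex-linking surfaces. The main obstacle I expect is the spun case: verifying that the infinitely-many-triangle extension at a cusp is genuinely a proper embedding of a surface into $M$ (rather than an immersion or a non-proper limit), and that the ``algebraic boundary'' on each cusp torus is well-defined and compatible with the global $Q$--matching data. This is a careful but mechanical check using the abstract neighbourhoods $B(e)$ for edges ending at that cusp and the induced triangulation $\tri_i$ of $T_i$.
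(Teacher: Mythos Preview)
The paper does not give its own proof of this theorem; immediately after the statement it simply says ``For a proof of Theorem~\ref{thm:admissible integer solution gives normal}, see \cite[Theorem~2.1]{kang05-spun} or \cite[Theorem~2.4]{tillmann08-finite}.'' Your outline is essentially the standard argument found in those references: place the quadrilaterals using admissibility, then solve for compatible triangle counts around each edge using the $Q$--matching equations (interpreted, as you correctly do, as the vanishing of net slope around the abstract neighbourhood $B(e)$), and finally spin infinitely many triangles into any cusp on which the boundary is non-trivial. Your uniqueness argument---that two surfaces with the same $Q$--coordinate differ by a triangle-only normal surface, which must be a union of vertex links---is also the standard one.

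The one place where your sketch is a bit soft is exactly where you flag it: in the spun case you need to argue carefully that the triangle counts can be chosen consistently and finitely everywhere except possibly near the ideal vertices, and that near an ideal vertex with non-trivial boundary the infinite stack of triangles genuinely yields a properly embedded surface. In the cited references this is handled by working in the induced triangulation of the vertex link and showing that the quadrilateral data determines a well-defined normal curve on the link torus; the infinite tail of triangles is then the trace of an isotopy spinning that curve around the torus. You have identified the right mechanism, but a complete proof would need to make this step precise rather than leave it as ``a careful but mechanical check.''
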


This is related to Hauptsatz 2 of \cite{haken61-knot}. For a proof of Theorem \ref{thm:admissible integer solution gives normal}, see
\cite[Theorem~2.1]{kang05-spun} or \cite[Theorem~2.4]{tillmann08-finite}.
The set of all $x\in \RR^{\square}$ with the property that
(i)~$x\ge 0$ and (ii)~$x$ satisfies the $Q$--matching equations is denoted
$\mathcal{Q}(\tri).$ This
naturally is a polyhedral cone, but
the set of all admissible $x\in \RR^{\square}$
typically meets $\mathcal{Q}(\tri)$ in a non-convex set $\mathcal{F}(\tri).$

The \emph{projectivised solution space} $\mathcal{PQ}(\tri)$ and the \emph{projectivised admissible solution space} $\N(\tri)$, are obtained as the quotient spaces under the multiplication action of the positive real numbers. These are usually identified via radial projection with compact subsets of either the standard simplex or a sphere centred at the origin. For the purpose of this paper, we view $\N(\tri)$ as a compact subset of a sphere centred at the origin.


\subsection{The relationship between gluing and matching equations}
\label{subsec:relationship between gluing and matching}

The deformation variety $\D (\tri)$ is not defined by a principal ideal,
hence its logarithmic limit set $\D_\infty(\tri)$ is in general not
directly determined by its defining equations (see \cite{tillus_ei} for
details).  However, it is contained in the intersection of the spherical duals of the Newton polytopes of its defining equations. The Newton polytope of a polynomial is the convex hull of its exponent vectors. The spherical dual of a polytope is the set of unit normal vectors to its supporting hyperplanes. For a polynomial $p,$ we write $\Sph(p)$ for the spherical dual of the Newton polytope of $p.$ See \cite{gb, tillus_ei} for details. From \eqref{eq:para} and \eqref{eq:glue}, we have
\begin{equation} \label{comb:tent log lim}
      \D_\infty(\tri) \subseteq \D_{\text{pre-}\infty}(\tri) = 
\overset{n}{\underset{i=1}{\bigcap}}
      \big( \Sph(g_i) \cap \Sph(p_i) \cap \Sph(p'_i) \cap \Sph(p''_i) 
\big).
\end{equation}
The set $\D_{\text{pre-}\infty} (\tri)$ is termed a \emph{tropical pre-variety}.

\begin{pro}[\cite{defo}, Proposition~3.1] \label{comb:homeo}
Let $M$ be the interior of an orientable, connected, compact 3--manifold with non-empty boundary consisting of tori, and $\tri$ be an ideal triangulation of $M.$ The set $\D_{\text{pre-}\infty}(\tri)$ is homeomorphic with the projective admissible solution space $\N(\tri)$ of normal surface theory. In particular, $\D_\infty (\tri)$ is homeomorphic with a closed subset of $\N (\tri).$
\end{pro}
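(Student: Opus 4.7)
The plan is to tropicalize the defining equations of $\D(\tri)$ one family at a time and to match the resulting spherical dual pieces on the nose with the admissibility condition and the $Q$--matching equations that cut out $\N(\tri)$. The homeomorphism will then be obtained by reading off the quadrilateral coordinates from the components of a vector in $\D_{\text{pre-}\infty}(\tri)$.

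First I would analyse the parameter equations tetrahedron by tetrahedron. Fix a tetrahedron $\sigma_i$ and restrict attention to the three coordinates $(x_i, x'_i, x''_i)$ of $\xi \in S^{3n-1}$ dual to $(z_i, z'_i, z''_i)$. The Newton polytope of $p_i = z_i - z_iz''_i - 1$ is the triangle on the exponent vectors $(1,0,0)$, $(1,0,1)$, $(0,0,0)$, so $\xi_i \in \Sph(p_i)$ iff the maximum of $\{x_i,\, x_i+x''_i,\, 0\}$ is attained twice; a short case analysis (done cyclically for $p'_i$ and $p''_i$) shows that the intersection
\[
\Sph(p_i)\cap\Sph(p'_i)\cap\Sph(p''_i)
\]
is exactly the union of the three rays $\mathbb{R}_{\ge 0}\cdot(1,-1,0)$, $\mathbb{R}_{\ge 0}\cdot(0,1,-1)$, $\mathbb{R}_{\ge 0}\cdot(-1,0,1)$, together with the origin. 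These three ray directions are precisely the tropical degenerations ``$z_i^{(k)}\to\infty$, next label $\to 0$, remaining label $\to 1$,'' and I identify each ray with a quadrilateral type $q\in\square$ meeting $\sigma_i$. Thus a component $\xi_i$ of a candidate point records, by the length of its projection onto the chosen ray, exactly one non--negative quadrilateral coordinate per tetrahedron. This implements the admissibility condition: at most one quadrilateral type in each tetrahedron.

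Next I would tropicalize the gluing equations. Since each $g_j$ is a binomial ($\prod_i z_i^{a_{ij}}(z'_i)^{a'_{ij}}(z''_i)^{a''_{ij}} - 1$), its Newton polytope has only two vertices, so $\Sph(g_j)$ is simply the great subsphere
\[
\sum_i\bigl(a_{ij}x_i + a'_{ij}x'_i + a''_{ij}x''_i\bigr) = 0.
\]
Substituting the parametrisation of the three rays from Step 1 (so that a ray parameter equals the quadrilateral coordinate $x(q)$ for the corresponding quad $q$), this linear relation becomes
\[
\sum_i\bigl[(a_{ij}-a'_{ij})\,x(q_i) + (a'_{ij}-a''_{ij})\,x(q'_i) + (a''_{ij}-a_{ij})\,x(q''_i)\bigr] = 0,
\]
which I then need to match, coefficient by coefficient, with the $Q$--matching equation for $e_j$, i.e.\ $\sum_{q\in\square}\wt_{e_j}(q)x(q)=0$. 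The verification is the combinatorial heart of the proposition: around the interior edge $\widetilde{e}$ of the abstract neighbourhood $B(e_j)$, each appearance of a label $z_i^{(k)}$ at $e_j$ contributes exactly one corner of each quadrilateral in $\sigma_i$ sitting opposite the $z_i^{(k)}$-edges, and one checks that the resulting $\pm 1$ slope contributions from Figures \ref{fig:matchingquadpos}--\ref{fig:matchingquadneg} reproduce the differences $a_{ij}-a'_{ij}$, etc. This is the main obstacle; it is a sign/convention bookkeeping that, once established for one tetrahedron, propagates everywhere, but it has to be carried out carefully because the cyclic labelling of shape parameters must be threaded consistently with the choice of ray $\leftrightarrow$ quadrilateral type.

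With the combinatorial identity in hand, define the map $\Psi\colon \D_{\text{pre-}\infty}(\tri)\to\N(\tri)$ by sending $\xi=(\xi_1,\dots,\xi_n)$ to the projective class of the vector whose $q$-coordinate is the (unique) ray-parameter at tetrahedron $i$ if $\xi_i$ lies on the ray associated with $q$, and $0$ otherwise. Continuity in both directions is immediate because the rays meet only at the origin and $\Psi$ is piecewise linear on the sphere. The image lands in $\mathcal{Q}(\tri)$ by Step 2 and in $\mathcal{F}(\tri)$ by Step 1, hence in $\N(\tri)$; conversely every admissible solution to the $Q$--matching equations lifts uniquely, after rescaling to $S^{3n-1}$, to an element of $\D_{\text{pre-}\infty}(\tri)$. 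This gives the homeomorphism. Finally, since $\D_\infty(\tri)\subseteq\D_{\text{pre-}\infty}(\tri)$ by \eqref{comb:tent log lim} and $\D_\infty(\tri)$ is closed in $S^{3n-1}$, its image under $\Psi$ is a closed subset of $\N(\tri)$, giving the ``in particular'' statement.
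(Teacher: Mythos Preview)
Your approach is essentially the same as the paper's (and the original argument in \cite{defo}). The paper packages the map via a single linear isomorphism: let $C_1$ be the skew--symmetric $3\times 3$ matrix with rows $(0,1,-1),\ (-1,0,1),\ (1,-1,0)$ and $C_n$ the block--diagonal matrix with $n$ copies of $C_1$; then the homeomorphism is $\xi = C_n^T N(\xi)$. The columns of $C_1^T$ are exactly your three rays, so admissibility of $N(\xi)$ is precisely the statement that each $\xi_i$ lies on one of them, and the gluing--to--matching step is the identity $B=AC_n$ (with $A$ the exponent matrix of the gluing equations and $B$ the coefficient matrix of the $Q$--matching equations), so $A\xi=0\iff BN(\xi)=0$.

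One point to fix: your ray--to--quadrilateral labelling is cyclically off. The correct dictionary is $q_i^{(k)}\leftrightarrow$ the degeneration $z_i^{(k)}\to 1$ (geometrically, this is when the pair of edges labelled $z_i^{(k)}$ collapse together and the splitting disc is the quadrilateral separating them), so $q_i\leftrightarrow(0,1,-1)$, $q'_i\leftrightarrow(-1,0,1)$, $q''_i\leftrightarrow(1,-1,0)$. With this labelling the substitution in your Step~2 yields
\[
\sum_i\bigl[(a''_{ij}-a'_{ij})\,x(q_i)+(a_{ij}-a''_{ij})\,x(q'_i)+(a'_{ij}-a_{ij})\,x(q''_i)\bigr]=0,
\]
which is exactly the $Q$--matching equation for $e_j$ as stated in the paper; your displayed coefficients $(a_{ij}-a'_{ij},\ldots)$ come from the shifted assignment $q_i\leftrightarrow(1,-1,0)$ and would not match. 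You were right to flag this as the point requiring care.
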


The proof of the above result in \cite{defo} gives a canonical homeomorphism $N \co \D_\infty (\tri) \to \N (\tri)$ as follows. 
At its heart is the following simple relationship between gluing equations and matching equations.
Recall the description of the hyperbolic gluing equation (\ref{eq:glue}) of
$e_j$:
\begin{equation*}
    1 = \prod_{i=1}^n z_i^{a_{ij}} {(z'_i)}^{a'_{ij}} 
{(z''_i)}^{a''_{ij}}.
\end{equation*}
Denoting $q_i^{(j)}$ the quadrilateral type separating the pair of edges with label $z_i^{(j)}$ in tetrahedron $\sigma_i$, it follows that the 
the Q--matching equation of $e_j$ is:
\begin{equation*}
    0 = \sum_{i=1}^n (a''_{ij} - a'_{ij}) q_i
                     + (a_{ij} - a''_{ij}) q'_i
+ (a'_{ij} - a_{ij}) q''_i
\end{equation*}
(see \cite{tillmann08-finite}, Section 2.9). Let \begin{equation}
    C_1 = \begin{pmatrix}
       0 & 1 & -1 \\
       -1 & 0 & 1 \\
       1 & -1 & 0
       \end{pmatrix},
\end{equation}
and let $C_n$ be the $(3n \times 3n)$ block diagonal matrix with $n$ copies of
$C_1$ on its diagonal. Hence letting $A$ be the exponent matrix of the hyperbolic gluing equations and $B$ the coefficient matrix of the 
Q--matching equations, we have $AC_n = B.$ The desired natural homeomorphism is given by letting $N(\xi)$ be the unique normal $Q$--coordinate such that $\xi = C_n^T N(\xi)$ for a given $\xi \in \D_\infty (\tri).$ 

 If $\xi$ has rational coordinate ratios, one can associate a unique spun-normal surface $S(\xi)$ to it as follows. By assumption, there is $r>0$ such that $rN(\xi)$ is an integer solution, and hence corresponds to a unique spun-normal surface without vertex linking components according to Theorem~\ref{thm:admissible integer solution gives normal}. One then requires $r$ to be minimal with respect to the condition that the surface is 2--sided.


\subsection{Essential surfaces and boundary curves}

Morgan and Shalen \cite{ms1} compactified the character variety of a 3--manifold by identifying ideal points with certain actions of $\pi_1(M)$ on $\RR$--trees (given as points in an infinite dimensional space), and dual to these are codimension--one measured laminations in $M.$ The work in~\cite{defo} takes a different but related approach by compactifying the deformation variety with certain transversely measured singular codimension--one foliations (a finite union of convex rational polytopes), and dual to these are (possibly trivial) actions on $\RR$--trees. The relationship between these compactifications is investigated in Section 6 of~\cite{defo}, with focus on the original consideration of ideal points of curves and surfaces dual to Bass--Serre trees due to Culler and Shalen \cite{cs}. A surface which can be reduced to an essential surface that is detected by the character variety in the sense of \cite{tillus_mut} will be called \emph{weakly dual to an ideal point of a curve in the character variety}. The following result combines statements of Theorem~1.2 and Proposition~4.3 in \cite{defo}:

\begin{thm}\cite{defo}\label{comb:essential prop}
Let $M$ be the interior of a compact, connected, orientable, irreducible 3--manifold with non-empty boundary consisting of a disjoint union of tori, and let $\tri$ be an ideal triangulation of $M.$  Let $\xi \in \D_\infty(\tri)$ be a point with rational coordinate ratios, and assume that the spun-normal surface $S(\xi)$ is not closed. 

Then $S(\xi)$ is non-trivial and (weakly) dual to an ideal point of a curve in the character variety of $M$. Moreover, every essential surface detected by this ideal point has the same boundary curves as $S(\xi).$ In particular, the boundary curves of $S(\xi)$ are strongly detected.
\end{thm}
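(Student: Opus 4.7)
Since $\xi$ has rational coordinate ratios, I would begin by applying Lemma~\ref{curve finding lemma} to obtain an irreducible curve $C \subset \D(\tri)$ with $\xi \in C_\infty$. The composition $\pee|_C\co C \to \PEi(M)$ records the squares of the peripheral eigenvalues, and I would first argue that the image $\chi_\tri(C) \subset \PX(M)$ is itself a curve rather than a point. The assumption that $S(\xi)$ is not closed means that the $Q$--coordinate $N(\xi)$ has non-zero weight on some quadrilateral meeting a boundary torus $T_i$. Via the identity $\xi = C_n^T N(\xi)$ from \S\ref{subsec:relationship between gluing and matching}, combined with the Neumann--Zagier holonomy formula $\mu(\gamma) = (-1)^{|\gamma|}\prod z_i^{(k)}$ for a peripheral curve $\gamma$, this forces $\log|\mu(\m_i)|$ or $\log|\mu(\l_i)|$ to be unbounded along any sequence in $C$ converging to $\xi$. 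Hence $\pee(C)$ has an ideal point in $\PEi_\infty(M)$, and so $\chi_\tri(C)$ must be a curve with an ideal point $\tilde\xi$ over $\xi$.

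With $\tilde\xi$ now an honest ideal point of a curve in $\PX(M)$, I would invoke the Culler--Shalen machinery of \S\ref{sec:cs-theory}: the valuation $\mathrm{ord}_{\tilde\xi}$ on the function field of $\chi_\tri(C)$ pulls back via the tautological representation to give a non-trivial action of $\pi_1(M)$ on the associated Bruhat--Tits tree, and Stallings' construction then yields an essential surface $S'\subset M$ dual to this action. By Lemma~\ref{psl boundary slopes lemma}, the boundary slope of $S'$ on $T_i$ is $\varphi(P_h\,\cdot)$ applied to the image of $\xi$ in $\PEi_\infty(M)$, which is the projectivised ratio $(-\mathrm{ord}_\xi(\mu(\m_i)):-\mathrm{ord}_\xi(\mu(\l_i)))$. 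Computing the same ratio directly from $N(\xi)$ using the $Q$--matching equations shows that it equals the boundary slope of $S(\xi)$ on $T_i$. This simultaneously yields weak duality, coincidence of boundary curves between $S(\xi)$ and $S'$, and the forcing statement that any essential surface dual to $\tilde\xi$ must carry these slopes, since the slopes are dictated by $\mathrm{ord}_{\tilde\xi}$ alone.

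The main obstacle, I expect, is the compatibility computation in the previous paragraph: translating the algebraic identity $\xi = C_n^T N(\xi)$ into the assertion that $\mathrm{ord}_\xi$ applied to $\mu(\gamma)$ equals, up to sign, the signed weight $\sum_{q}\wt_{e}(q)\,x(S(\xi))(q)$ counting intersections of $\partial S(\xi)$ with $\gamma$ on $T_i$. This is the technical heart of~\cite{defo}, and it is precisely where the exponent identity $AC_n = B$ does its work. Once it is in place, non-triviality of $S(\xi)$ follows because $\partial S(\xi)$ is an essential multi-curve on $\partial M$, so $S(\xi)$ cannot be a union of vertex-linking tori; and strong detection of the boundary slopes is automatic since they are read off directly from $\chi_\tri(C) \subset \PX(M)$. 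Finally, the normalisation theorems of Walsh and Kang--Rubinstein recalled in \S\ref{whl:embedded normal surfaces} permit replacing the abstract Stallings surface $S'$ by a spun-normal representative with the same boundary as $S(\xi)$, closing the loop with Theorem~\ref{thm:admissible integer solution gives normal}.
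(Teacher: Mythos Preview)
The paper does not supply a proof of this theorem; it is quoted verbatim from \cite{defo} with the attribution ``The following result combines statements of Theorem~1.2 and Proposition~4.3 in \cite{defo}.'' So there is no in-paper argument to compare against, and your sketch is effectively a reconstruction of the argument in \cite{defo}.

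Your outline is the right one and matches the strategy there: pass to a curve $C\subset\D(\tri)$ via Lemma~\ref{curve finding lemma}, use non-closedness of $S(\xi)$ together with $\xi=C_n^T N(\xi)$ and the holonomy formula to force some peripheral holonomy $\mu(\gamma)$ to have non-zero valuation (so $\chi_\tri(C)$ is a genuine curve with an ideal point $\tilde\xi$), then run Culler--Shalen and read off boundary slopes from the valuations via $AC_n=B$. The identification of the boundary slopes of any essential surface dual to $\tilde\xi$ with those of $S(\xi)$ is exactly the content of Proposition~4.3 in \cite{defo}, and you have correctly located $AC_n=B$ as the mechanism.

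Two places where your sketch drifts. First, the final appeal to Walsh and Kang--Rubinstein is misdirected: those results are used in \S\ref{incompressible after FH} to show essential surfaces in $\whl$ normalise, which is the converse direction and irrelevant here. Second, and more substantively, producing an essential Stallings surface $S'$ with the same boundary curves as $S(\xi)$ does not by itself establish that $S(\xi)$ is \emph{weakly dual} in the sense defined just before the theorem: one must show that $S(\xi)$ can be \emph{reduced} (via compressions and discarding trivial components) to an essential surface detected by $\tilde\xi$. In \cite{defo} this is done by identifying the Bass--Serre tree at $\tilde\xi$ with (a quotient of) the tree dual to the normal surface $S(\xi)$, so that the reduction of $S(\xi)$ itself realises the essential dual surface. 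Your route---produce $S'$ abstractly, then normalise it back---does not close this loop, and in any case normalisation need not recover $S(\xi)$ rather than some other spun-normal surface with the same slopes.
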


It is an open question whether there is an ideal point of the deformation variety that is associated with a closed essential surface but with the property that any associated sequence of characters does not approach an ideal point of the character variety.

For a more general statement pertaining to all points in $\D_\infty(\tri)$, and thus completing the description of its relationship with the Morgan--Shalen compactification, see Theorem 1.2 in \cite{defo}. 


\section{The Whitehead link}
\label{sec:whl}

The right--handed Whitehead link is shown  in the figure on the right. Throughout this paper,  $\whl = \whl^+$ 
denotes its complement in $S^3.$ This is a hyperbolic 3--manifold  isometric with  the same  orientation\begin{wrapfigure}{r}{44mm}
\begin{center}
  \includegraphics[width=3.8cm]{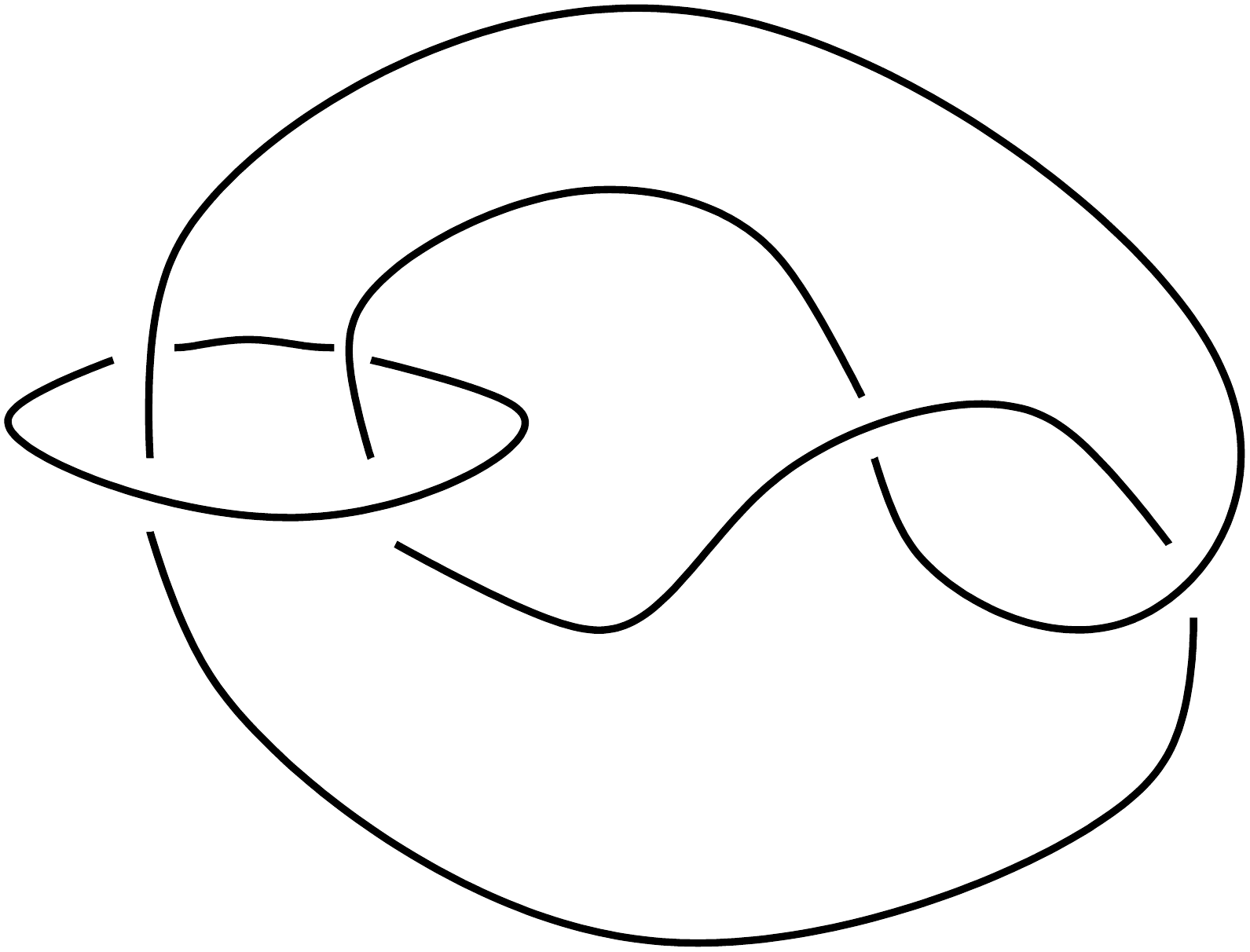}
\end{center}
\end{wrapfigure}
 to the manifold $m129$ in \tt{SnapPy }\rm's census. The complement of a left--handed Whitehead  link is oppositely oriented, and denoted by $\whl^-.$ 

The standard triangulation $\tri_\whl$ with four ideal tetrahedra of $\whl$ will be considered throughout; the triangulation is given in  Figure~\ref{fig:whl_triangulation} and Table~\ref{tab:Face pairings}, and is derived from \tt{SnapPy}\rm's manifold data (see Appendix \ref{sec:snap data}). We remark that $\whl$ has three inequivalent minimal triangulations with four tetrahedra; they arise from the three diffent ways one can choose a ``diagonal" in an octahedron; the triangulation used here is shipped as $m129 : \#2$ in \tt{Regina}\rm's cusped orientable manifold census.
There are four tetrahedra in $\tri_\whl,$ shown in Figure~\ref{fig:whl_triangulation}, and hence four edges, which will be called red, green, black and blue. The black edge is labelled with one arrow, the blue with two, the green with three and the red with four. One of the cusps corresponds to the ideal endpoints of the red edge, and the other to the ideal endpoints of the green edge. The cusps are therefore referred to as the red cusp (also:\thinspace cusp 0) and the green cusp (also:\thinspace cusp 1) respectively. The face pairings are described in Table~\ref{tab:Face pairings}, where the face of tetrahedron $\sigma_i$ opposite vertex $j$ is denoted by $F_{i,j}.$

\begin{figure}[h]
  \begin{center}
      \includegraphics[width=14cm]{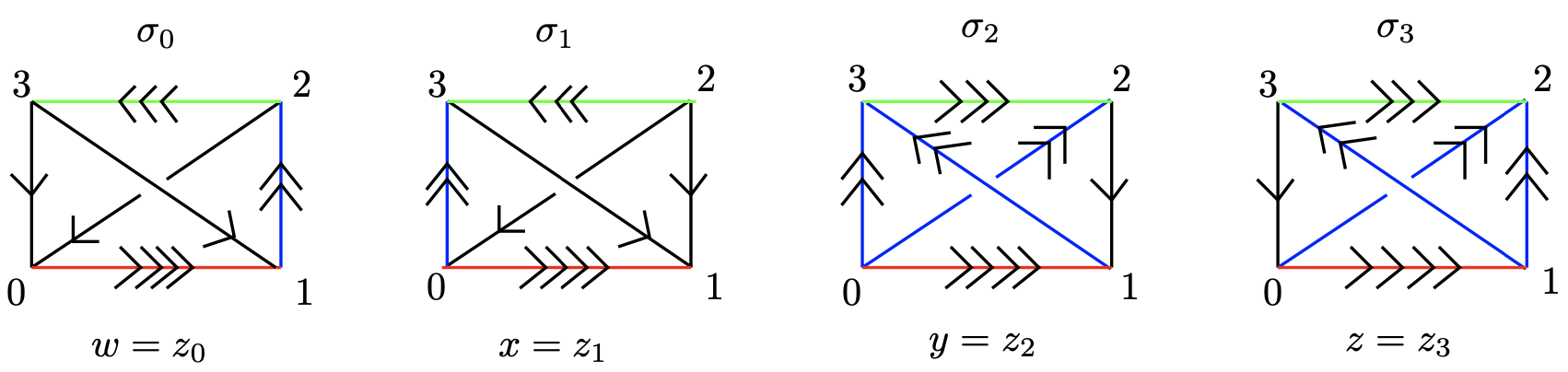}
   \end{center}
  \caption{Triangulation $\tri_\whl$ of $\whl$}
  \label{fig:whl_triangulation}
\end{figure}

\begin{table}[h]
{\small
\begin{center}
\begin{tabular}{|l|  l|  l|  l|}
\hline
$F_{0,0} \to F_{2,0}$ & $F_{1,0} \to F_{0,1}$ & $F_{2,0} \to F_{0,0}$ 
&$F_{3,0} \to F_{2,1}$ \\
$F_{0,1} \to F_{1,0}$ & $F_{1,1} \to F_{3,1}$ & $F_{2,1} \to F_{3,0}$
&$F_{3,1} \to F_{1,1}$ \\
$F_{0,2} \to F_{1,3}$ & $F_{1,2} \to F_{2,3}$ & $F_{2,2} \to F_{3,3}$
&$F_{3,2} \to F_{0,3}$\\
$F_{0,3} \to F_{3,2}$ & $F_{1,3} \to F_{0,2}$ & $F_{2,3} \to F_{1,2}$
&$F_{3,3} \to F_{2,2}$ \\ 
\hline
\end{tabular}
\end{center}
}
\caption{Face pairings}
\label{tab:Face pairings}
\end{table}

\begin{wrapfigure}{r}{58mm}
\psfrag{M}{{\small $\m$}}
\psfrag{L}{{\small $\l$}}
\begin{center}
  \includegraphics[width=5.5cm]{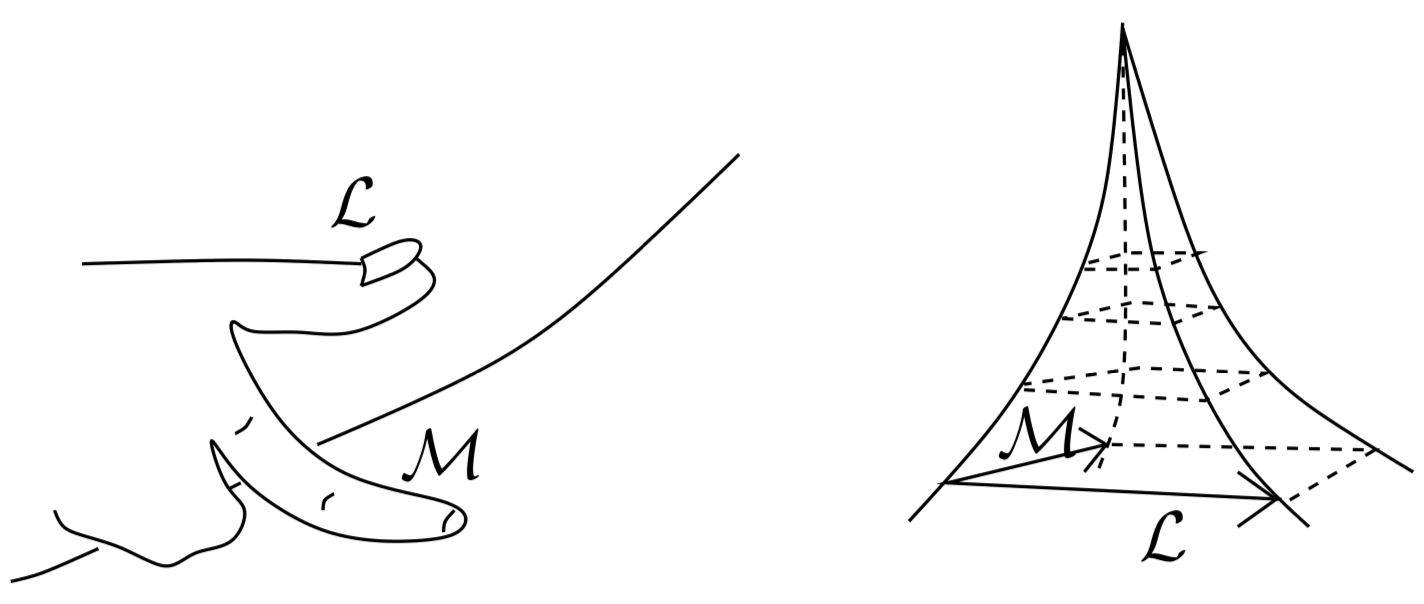}
\end{center}
\end{wrapfigure}
The convention for orientations of the peripheral elements is shown on the right, where the right--hand rule is applied to a link projection and the induced triangulation of the cusp cross--section is viewed from outside the manifold.

The induced triangulations of the cusp cross--sections of $\whl$ are given in Figure~\ref{fig:whl_cusps}. There is a standard meridian, and two choices for longitudes are considered, which are termed \emph{geometric} and \emph{topological} and denoted by $\l^g$ and $\l^t$ respectively. The geometric longitudes are chosen by \tt{SnapPy }\rm for $m129.$ They have the following property. Let $\whl (p,q)$ be the manifold obtained from hyperbolic Dehn filling on one of the cusps. The coefficients $(p,q) \in \RR^2$ are called \emph{exceptional} if $\whl (p,q)$ does not admit a complete hyperbolic metric. If the longitude is geometric, then the set of exceptional coefficients is contained in the rectangle with vertices $\pm (2, 1),$ $\pm (2, -1)$ (see \cite{hmw}). If a null--homologous longitude is chosen, the set of exceptional coefficients is contained in a parallelogram with vertices $\pm (-4,1),$ $\pm (0,1)$ for $\whl^-$ (see \cite{nr}), and $\pm (4,1),$ $\pm (0,1)$ for $\whl^+.$ 
The natural linear maps between the Dehn surgery coefficients with respect to the different peripheral systems are given in Table~\ref{tab:surg coeff}.

\begin{table}[h]
\begin{center}
\begin{tabular}{|c| c| c|}
\hline
$\{\m , \l^t\}$ for $\whl^-$ & $\{\m , \l^g\}$ for $\whl^+$ and $\whl^-$ & $\{\m , \l^t\}$ for $\whl^+$\\
\hline
$(p+2q, -q)$ & $(p,q)$      & $(p+2q,q)$\\
\hline
\end{tabular}
\end{center}
\caption{Surgery coefficients with respect to the peripheral systems}
\label{tab:surg coeff}
\end{table}

\begin{figure}[t]
  \begin{center}
                 \includegraphics[width=12cm]{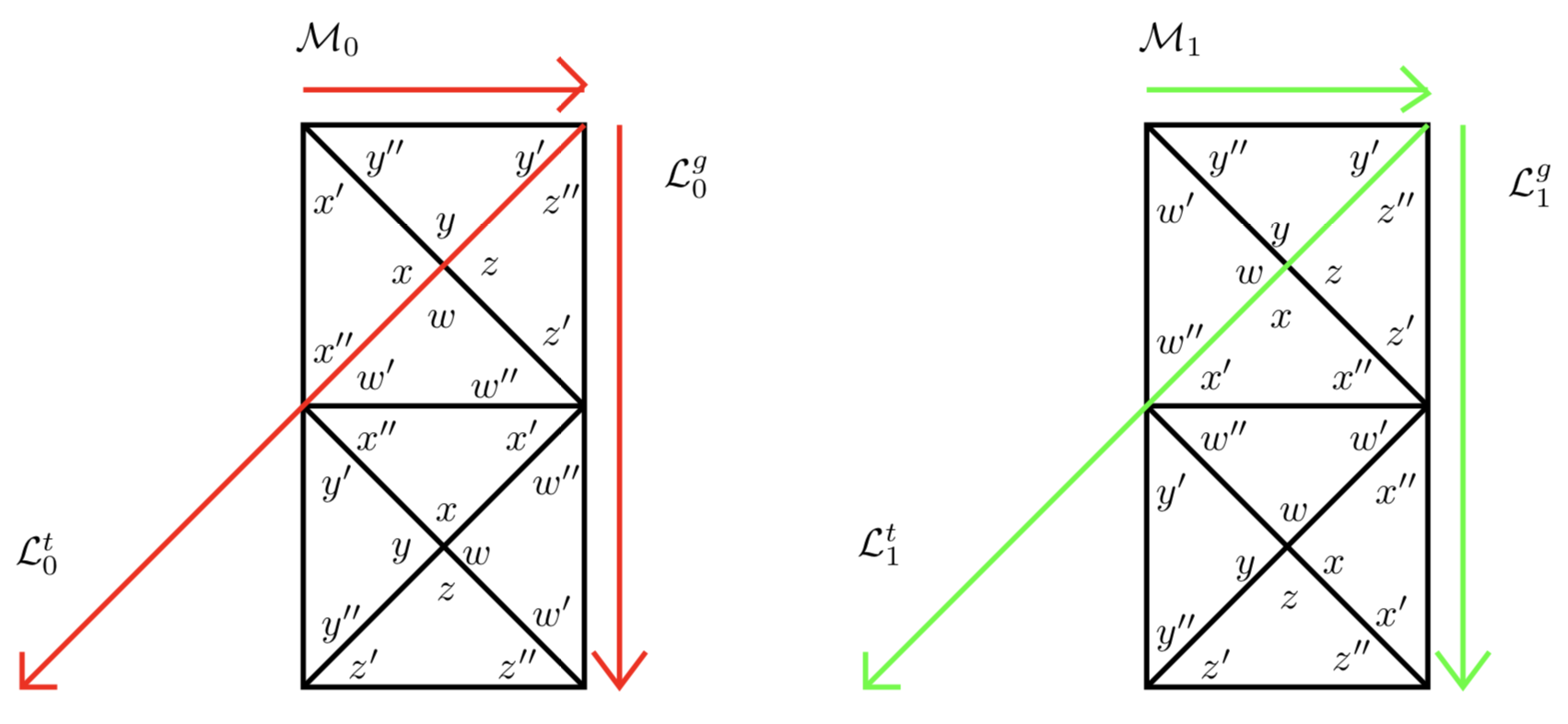}
  \end{center}
  \caption{Triangulation of cusp cross--section and peripheral elements}
  \label{fig:whl_cusps}
\end{figure}


\section{Affine algebraic sets}
\label{whl:associated varieties}

This section determines the various affine algebraic sets associated with the Whitehead link complement, as well as the maps between them. Differences between result below and results in Neumann-Reid \cite{nr} arise from the fact that the latter use a left-handed link and have a different convention regarding shape parameters.


\subsection{Deformation variety}
\label{sec:defo equations}

The shape parameters are assigned in Figure~\ref{fig:whl_triangulation}, and the following hyperbolic gluing equations can be read off from this figure: 
\begin{align}
\tag{\text{red \& green}}   1 &= w x y z \\
\tag{\text{black}} 1 &= w'x'y'z'(w'')^2 (x'')^2\\
\tag{\text{blue}}  1 &= w'x'y'z'(y'')^2 (z'')^2.
\end{align}
The deformation variety $\D(\tri_\whl)$ is defined by these equations and the parameter relations: 
\begin{align}
&1=w (1 - w''),  &&1=x (1 - x''),  &&1=y (1 - y''),  &&1=z (1 - z''),\label{eq:whl:z} \\
&1=w' (1 - w),   &&1=x' (1 - x),   &&1=y' (1 - y),   &&1=z' (1 - z), \label{eq:whl:z'}\\
&1=w'' (1 - w'), &&1=x'' (1 - x'), &&1=y'' (1 - y'), &&1=z'' (1 - z').\label{eq:whl:z''}
\end{align}
Using the fact that equations of the form $ww'w''=-1$ result from the parameter relations, it is not hard to see that $\D(\tri_\whl)$ can be defined by the parameter relations in (\ref{eq:whl:z}) and (\ref{eq:whl:z'}) together with the following two equations:
\begin{equation} \label{whl: defo simple relations}
1 = w x y z \qquad\text{and}\qquad w''x'' = y''z''.
\end{equation}
Then $\D(\tri_\whl)$ is mapped into $(\C-\{ 0\})^4$ by the projection $\varphi (w,w',{\ldots} ,z'') = (w,x,y,z).$ The closure of the resulting image is a variety defined by:
\begin{align}
\label{whl:param1} 0 &= 1 - wxyz,\\
\label{whl:param2} 0 &= wx(1 - y)(1 - z) - (1 - w)(1 - x)yz.
\end{align}
This variety is the \emph{parameter space} of \cite{t, nz}, and is denoted by $\D'(\tri_\whl).$ For any $w,x,y,z\in\C-\{0,1\}$ subject to (\ref{whl:param1}) and (\ref{whl:param2}), there is a unique point on
$\D(\tri_\whl).$ Thus, $\D(\tri_\whl)$ and $\D'(\tri_\whl)$ are birationally equivalent, and the inverse map:
\begin{align*}
&\varphi^{-1}(w,x,y,z)\\
= &\bigg(w,\frac{1}{1-w}, \frac{w-1}{w}, 
                           x,\frac{1}{1-x}, \frac{x-1}{x}, 
		   y,\frac{1}{1-y}, \frac{y-1}{y}, 
			   z,\frac{1}{1-z}, \frac{z-1}{z}\bigg)
\end{align*}
is not regular at the intersection of $\D'(\tri_\whl)$ with the collection of hyperplanes
\begin{equation*}
\{ w=1 \} \cup \{ x=1 \} \cup \{ y=1 \} \cup \{ z=1 \}
\end{equation*}
in $(\C-\{0\})^4.$ If $\D'(\tri_\whl)$ intersects any one of these hyperplanes, then it intersects either exactly two or four of them.

One of the variables, say $w,$ can be eliminated from the system of equations (\ref{whl:param1}) \& (\ref{whl:param2}), and hence there is a map $\varphi' : \D' (\whl ) \to (\C-\{0\})^3$ with the closure of its image defined by a single irreducible equation:
\begin{equation}
0= g(x,y,z) = x - xy - xz + yz - xy^2z^2 + x^2y^2z^2.
\end{equation}
Again, $\varphi'$ is a birational isomorphism onto its image, and this in particular shows that $\D(\tri_\whl)$ is irreducible. Whence $\D(\tri_\whl)$ is an irreducible variety in $(\C-\{0\})^{12}$ defined by the $10$ equations in (\ref{eq:whl:z}), (\ref{eq:whl:z'}) and  (\ref{whl: defo simple relations}), and so:

\begin{pro}
The deformation variety $\D(\tri_\whl)$ is a complete intersection variety of dimension two.
\end{pro}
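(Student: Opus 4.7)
The plan is to combine two facts that the preceding discussion has already essentially established: (i) the variety $\D(\tri_\whl)\subset (\C-\{0\})^{12}$ is cut out by the 10 equations in (\ref{eq:whl:z}), (\ref{eq:whl:z'}), and (\ref{whl: defo simple relations}); and (ii) the dimension of $\D(\tri_\whl)$ is exactly $2$, so that $\mathrm{codim}\,\D(\tri_\whl) = 12 - 2 = 10$ matches the number of defining equations. Together these give a complete intersection of dimension two by definition.

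First I would carefully account for the defining equations. The three parameter relations $z(1-z'')=1$, $z'(1-z)=1$, $z''(1-z')=1$ for a single tetrahedron are pairwise related: any two imply the third (via the symmetry $zz'z''=-1$ noted in the text). Hence for each of the four tetrahedra it suffices to keep the two relations in (\ref{eq:whl:z}) and (\ref{eq:whl:z'}), giving $8$ equations; these alone force the third-factor relation in (\ref{eq:whl:z''}). Next, I would check that, modulo these $8$ parameter relations, the three hyperbolic gluing equations (red \& green, black, blue) reduce to the two equations $wxyz=1$ and $w''x''=y''z''$ of (\ref{whl: defo simple relations}). Indeed, under the parameter relations one has $w'x'y'z' = (1-w)^{-1}(1-x)^{-1}(1-y)^{-1}(1-z)^{-1}$, so the black and blue gluing equations both become equivalent (given $wxyz=1$) to a single monomial relation among the $w'', x'', y'', z''$, which rearranges to $w''x''=y''z''$. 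This justifies the count of $10$ defining equations.

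Second, I would extract the dimension from the birational analysis already carried out. The projection $\varphi\co \D(\tri_\whl) \to \D'(\tri_\whl)\subset (\C-\{0\})^4$ and the further projection $\varphi'\co \D'(\tri_\whl)\to V(g)\subset (\C-\{0\})^3$ are both birational isomorphisms onto their images, and $V(g)$ is the zero set of the single irreducible polynomial
\[
g(x,y,z) = x - xy - xz + yz - xy^2z^2 + x^2y^2z^2.
\]
Since $g$ is irreducible and nonconstant, $V(g)$ is an irreducible hypersurface in $(\C-\{0\})^3$, hence of (complex) dimension two. Birational equivalence preserves dimension and irreducibility, so $\D(\tri_\whl)$ is irreducible of dimension two.

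Combining: $\D(\tri_\whl)$ is an irreducible subvariety of $(\C-\{0\})^{12}$ of codimension $10$ defined by $10$ equations, so it is a complete intersection of dimension two. The only step that is not entirely routine is the bookkeeping that reduces the full set of parameter and hyperbolic gluing equations to the chosen $10$ equations without losing or gaining components; everything else follows formally from the birational equivalences already in place.
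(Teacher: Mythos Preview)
Your proposal is correct and follows essentially the same approach as the paper: reduce the defining equations to the ten in (\ref{eq:whl:z}), (\ref{eq:whl:z'}), (\ref{whl: defo simple relations}), establish irreducibility and dimension two via the birational chain $\D(\tri_\whl)\to\D'(\tri_\whl)\to V(g)$, and conclude by matching codimension to the number of equations. The paper presents this more tersely, leaving the bookkeeping you spell out (the redundancy among the three parameter relations per tetrahedron, and the reduction of the three gluing equations to two) as brief remarks rather than detailed checks.
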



\subsection{Symmetries}

There are symmetries in the defining equations of the deformation variety $\D(\tri_\whl),$ which
descend to symmetries in  (\ref{whl:param1}) and (\ref{whl:param2}).
Consider the following involutions:
\begin{align}
\label{whl:klein1}  &\tau_1 (w,x,y,z) = (z,y,x,w)  &\text{and}  &&\tau_2 (w,x,y,z) = (y,z,w,x), \\
                                 &\tau_3 (w,x,y,z) = (x,w,z,y), &                   &&\\ 
\label{whl:klein2}  &\tau_4 (w,x,y,z) = (w,x,z,y)  &\text{and}  &&\tau_5 (w,x,y,z) = (x,w,y,z).
\end{align}
Then $\tau_1\tau_2=\tau_3=\tau_4\tau_5,$ and each of the pairs (\ref{whl:klein1}) and (\ref{whl:klein2}) generates a Klein four group. The elements $\tau_6 = \tau_1 \tau_4$ and $\tau_7 = \tau_2 \tau_4$ have order four, and the group $D_4$ generated by all involutions is a dihedral group. For any $p \in \D'(\tri_\whl)$ and any $\tau \in D_4,$ $\tau p \in \D'(\tri_\whl),$ and if $\varphi^{-1} : \D'(\tri_\whl) \to \D(\tri_\whl)$ is regular at $p,$ then it is regular at $\tau p.$


\subsection{Holonomies}
\label{subsec:holonomies}

Each cusp cross section inherits a triangulation induced by $\tri_\whl.$ Let $\gamma$ be a closed simplicial path on a cusp cross section. In
\cite{nz}, the \emph{derivative of the holonomy} $H'(\gamma)$ is defined as $(-1)^{|\gamma|}$ times the product of the moduli of the triangle vertices touching $\gamma$ on the right, where $|\gamma|$ is the number of 1--simplices of $\gamma,$ and the moduli asise from the corresponding edge labels.

The derivatives of the holonomies can this be read off from Figure~\ref{fig:whl_cusps}, and simplify to:
\begin{align}
\label{whl:holo0}
H'(\m_0 ) &= \frac{z}{w'y''} =\frac{x'z''}{y}, 
&& H'(\l^t_0)= x^2 y^2, 
&& H'(\l^g_0) = \frac{(w'')^2}{(y'')^2} =\frac{(z'')^2}{(x'')^2}, \\
\label{whl:holo1}
H'(\m_1 ) &= \frac{w'z''}{y} = \frac{x}{w''z'},   
&& H'(\l^t_1)= w^2 y^2,
&& H'(\l^g_1) = \frac{(z'')^2}{(w'')^2}=\frac{(x'')^2}{(y'')^2}.
\end{align}
The completeness equations for the holonomies yield that the complete hyperbolic structure is attained at $(w,x,y,z) = (i,i,i,i).$ The action of $D_4$ on the holonomies is described in Table \ref{tab:action on hol}, where $(m_i,l_i) = (H'(\m_i ), H'(\l^t_i)).$ The relationship between elements of $D_4$ and elements of the symmetry group of $\whl$ can be deduced from this table.
\begin{table}[h]
{\small
\begin{center}
{\tiny
\begin{tabular}{ |c | c | c | c | c | c | c | c |}
\hline
& $\tau_1$ & $\tau_2$ & $\tau_3$ & $\tau_4$ & $\tau_5$
& $\tau_6$ & $\tau_7$ \\
\hline
$(m_0,l_0)$ & $(m_0,l_0)$ & 
$(m_0^{-1},l_0^{-1})$ & $(m_0^{-1},l_0^{-1})$ & $(m_1^{-1},l_1^{-1})$ & 
$(m_1,l_1)$ & $(m_1,l_1)$ & $(m_1^{-1},l_1^{-1})$\\
\hline
$(m_1,l_1)$ & $(m_1^{-1},l_1^{-1})$ 
& $(m_1,l_1)$ & $(m_1^{-1},l_1^{-1})$ & $(m_0^{-1},l_0^{-1})$ & 
$(m_0,l_0)$ & $(m_0^{-1},l_0^{-1})$ & $(m_0,l_0)$\\
\hline
\end{tabular}
}
\end{center}
}
\caption{Action of $D_4$ on holonomies}
\label{tab:action on hol}
\end{table}


\subsection{Fundamental group}
\label{subsec:Fundamental group}

An abstract presentation of $\pi_1(\whl)$ can be computed from the triangulation using the set-up in the next section:
\begin{equation*}
\pi_1(\whl )=\langle\a,\b,\c,\d \mid \d = \a\b, \b\c\d = \c\d\c, \a\c = \b\a\rangle, 
\end{equation*}
and the peripheral elements are the following words in the generators:
\begin{align*}
&\m_0 = \b^{-1},               
&&\l_0^g = \a \d^{-1} \c^{-1},
&&\l_0^t = \m_0^{-2} \l_0^g,\\
&\m_1 = \d,                  
&&\l_1^g = \a^{2}\b\c,
&&\l_1^t = \m_1^{-2} \l_1^g.
\end{align*}
Thus, $\pi_1(\whl )$ can be generated by two meridians, and one obtains a
single relation:
\begin{align}
\label{whl:fundamental group}
\pi_1(\whl ) = \langle \m_0, \m_1 \mid &
\m_1\m_0\m_1 \m_0^{-1}\m_1^{-1}\m_0^{-1}\m_1\m_0\\
&\notag\quad = \m_0\m_1 \m_0^{-1}\m_1^{-1}\m_0^{-1}\m_1\m_0\m_1 \rangle
\end{align}


\subsection{Developing map}

Given a point $Z = (w,w',{\ldots} ,z'') \in \D(\tri_\whl),$ there are (1) a continuous map $\dev_Z \co \widetilde{\whl} \to \H^3$ such that each ideal tetrahedron in $\widetilde{\whl}$ (with the lifted ideal triangulation) is mapped to an ideal hyperbolic tetrahedron of the specified shape, and (2) a unique representation $\prho_Z \co \pi_1(\whl) \to \PSL$ which makes $\dev_Z$ $\pi_1(\whl)$--equivariant (see Section 2.5 in \cite{defo}). We start with an embedding of (a lift of) $\sigma_0$ with vertices at the points $0,1,\infty, w,$ and develop around the edge $[0, \infty]$ according to the parameters given by $Z.$ This results in one lift of each ideal tetrahedron to $\H^3.$ The resulting ideal vertices of the tetrahedra are indicated in Figure~\ref{fig:whl_triang_dev}. Some tetrahedra may intersect or be ``inverted''. If all tetrahedra are positively oriented, then the resulting fundamental domain is an ideal octahedron.

Indeed, one obtains a combinatorial ideal octahedron, if one only applies the face pairings shown in Table~\ref{tab:pairings for fundom}. The Whitehead link complement is then obtained by applying the remaining face pairings, which match the boundary faces of the ideal octahedron.
\begin{table}[h]
\begin{center}
\begin{tabular}{|l|  l|  l|  l|}
\hline
$F_{0,2} \to F_{1,3}$ & $F_{1,2} \to F_{2,3}$ & $F_{2,2} \to F_{3,3}$
&$F_{3,2} \to F_{0,3}$\\
$F_{0,3} \to F_{3,2}$ & $F_{1,3} \to F_{0,2}$ & $F_{2,3} \to F_{1,2}$
&$F_{3,3} \to F_{2,2}$ \\ 
\hline
\end{tabular}
\end{center}
\caption{Face pairings used for fundamental domain}
\label{tab:pairings for fundom}
\end{table}

\begin{figure}[h]
  \begin{center}
      \includegraphics[width=16cm]{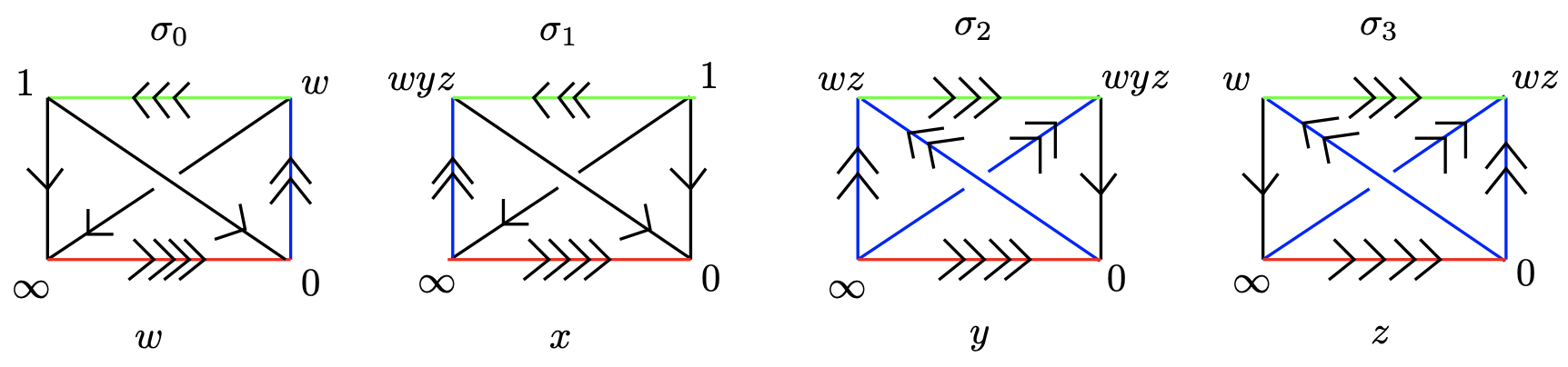}
   \end{center}
  \caption{Developing map}
  \label{fig:whl_triang_dev}
\end{figure}

Applied to the developed ideal tetrahedra in $\H^3,$ the remaining face pairings are:
\begin{align*}
\rho_Z(\a)\co & [0,1,wyz]\to [\infty, w, 1], \qquad   \rho_Z(\b)\co [w, \infty, wz] \to [1, \infty, wyz],\\
\rho_Z(\c)\co & [0,1,w] \to [0,wyz,wz],      \qquad   \rho_Z(\c\d)\co [w,0,wz] \to [wz,\infty,wyz],
\end{align*}
This determines a representation of $\pi_1(\whl)$ into $\PSL$: 
\begin{align*}
\rho_Z(\a) &= \sqrt{\frac{w'}{x'}}
    \begin{pmatrix}
          w+\frac{x'}{w'} & -\frac{x'}{w'} \\
	  1 & 0\\
    \end{pmatrix}	,
&&
\rho_Z(\b)= \sqrt{\frac{x'z''}{y}} 
    \begin{pmatrix}
          \frac{y}{x'z''} & 1 - x''z' \\
	  0 & 1\\
    \end{pmatrix}	,
\\
\rho_Z(\c) &= \sqrt{\frac{w''y'}{x}} 
    \begin{pmatrix}
          1 & 0 \\
	  xw'(1-wy)& \frac{x}{w''y'}\\
    \end{pmatrix},
&&
\rho_Z(\d) = \sqrt{\frac{y}{w'z''}} 
    \begin{pmatrix}
          (1-wx)w'             & \frac{y''}{w''} \\
	 \frac{w'}{x'} & \frac{1 - xz}{w''}\\
    \end{pmatrix}.
\end{align*}
We note:
$$\tr\rho_Z(\b) = \sqrt{\frac{x'z''}{y}}  + \sqrt{\frac{y}{x'z''}} = \sqrt{H'(\m_0 )} +  \frac{1}{\sqrt{H'(\m_0 )}}.$$
A calculation using the identity $wxyz=1$ and the standard identities for shape parameters (such as $ww'w''=-1$ and $w'(1-w)=1$) gives the following (where we have shown every second step):
\begin{align*}
\tr\rho_Z(\d) &=  \sqrt{\frac{y}{w'z''}} \Big(w'-ww'x+\frac{1}{w''}-\frac{xz}{w''}\Big)  = \sqrt{\frac{y}{w'z''}} \Big(1+\frac{w'(z-1)}{yz}\Big)
=\sqrt{\frac{y}{w'z''}} \Big(1+\frac{w'z''}{y}\Big)\\ &= \sqrt{H'(\m_1 )} +  \frac{1}{\sqrt{H'(\m_1 )}}.
\end{align*}
This algebraically confirms the consistency of the choice of meridians (up to orientation) in \S\ref{subsec:holonomies} and \S\ref{subsec:Fundamental group}.

At the complete structure, the peripheral elements have to be parabolic. This forces:
\begin{align*}
\a_0 &= \begin{pmatrix} 1+i & -1 \\ 1 & 0 \end{pmatrix}, 
&&\b_0 = \begin{pmatrix} 1 & 1-i \\ 0 & 1 \end{pmatrix},\\
\c_0 &= \begin{pmatrix} 1 & 0 \\ -1+i & 1 \end{pmatrix},
&&\d_0 = \begin{pmatrix} 1+i & 1 \\ 1 & 1-i \end{pmatrix}.
\end{align*}
There is a well--defined map $\chidefo=\chi_{\tri_\whl}\co \D(\tri_\whl)\to \PX (\whl),$ and it is shown in the next section that $\chidefo$ has degree 4. This corresponds to the observation that the manifolds $\whl (p_0,q_0,p_1,q_1),$ $\whl (-p_0,-q_0,p_1,q_1),$ $\whl (p_0,q_0,-p_1,-q_1),$ and $\whl (-p_0,-q_0,-p_1,-q_1)$ are geometrically distinguished (they ``spiral in different directions into the cusps''), whilst they have isomorphic fundamental groups.  


\subsection{Face pairings}
\label{whl:tautologicum}

A variety $\Ta (\whl )$ parameterising representations into $\SL$ is computed as follows. Putting:
\begin{equation}
\label{whl:taut}
\rho(\m_0 ) = \begin{pmatrix} s & c \\ 0 & s^{-1} \end{pmatrix} \text{  and  }
\rho(\m_1 ) = \begin{pmatrix} u & 0 \\ 1 & u^{-1} \end{pmatrix},
\end{equation}
one obtains a single equation using (\ref{whl:fundamental group}):
\begin{align}
\label{whl:taut_rel}
0&= f(s,u,c) \\
\notag&= (s-s^{-1})(u-u^{-1}) 
         + c(s^{-2}u^{-2}-u^{-2}-s^{-2}+4-s^2-u^2+s^2u^2)\\
  \notag &\qquad +c^2(2s^{-1}u^{-1}-su^{-1}-s^{-1}u+2su) + c^3 
  \in \C[s^{\pm 1}, u^{\pm 1}, c].
\end{align}
Then $\Ta (\whl )$ is an irreducible hypersurface in $(\C - \{ 0\})^2 \times \C,$
and its intersection with $c=0$ is the collection of lines
$\{ s^2 = 1, c=0\} \cup \{ u^2 = 1, c=0\},$ which parametrises reducible
representations. Moreover, $\Ta (\whl )$ is a cover of the Dehn surgery
component $\X_0(\whl)$ of the character variety, since any irreducible
representation of $\pi_1(\whl )$ into $\SL$ is conjugate to an element of
$\Ta (\whl ),$ and it is a 4--to--1 branched cover of $\X_0(\whl)$ since
$\Ta (\whl )$ is not contained in the union of hypersurfaces $s^2 = 1$ and
$u^2 = 1.$

If  $f(s,u,c)=0,$ then $f(-s,u,-c) = f(s,-u,-c)=f(-s,-u,c)=0,$ and the four
solutions correspond to the action of $\Hom(\pi_1(\whl), \Z_2)$ on
$\Ta(\whl ).$ To obtain a description of the corresponding quotient map
$\Ta (\whl ) \to \PTa (\whl ),$ and hence a parametrisation of the associated variety $\PTa
(\whl)$ of representations into $\PSL,$ note that  (\ref{whl:taut}) can be adjusted by a conjugation and
rewritten in the form:  
\begin{align}
\label{whl:good form}
&\rho(\m_0 ) = \frac{1}{s} \begin{pmatrix} s^2 & c s u\\ 0 & 1 \end{pmatrix}
\quad\text{and}\quad
\rho(\m_1 ) = \frac{1}{u} \begin{pmatrix} u^2 & 0 \\ 1 & 1 \end{pmatrix},
\end{align}
which is still subject to $0= f(s,u,c).$
The map $\qe_1: (\C-\{0\})^2\times \C \to (\C-\{0\})^2\times \C$ defined by
$\qe_1 (s,u,c) = (s^2, u^2, c s u)$ can be identified with the natural
quotient map $\Ta (\whl ) \to \PTa (\whl ),$ and the defining equation for
$\PTa (\whl )$ can be derived from this relationship. Thus, $\PTa (\whl )$ can be
viewed as a variety of representations into $GL_2(\C)$:
\begin{align}
\label{whl:ptaut}
& \prho_{GL}(\m_0 ) = \begin{pmatrix} \s & d \\ 0 & 1 \end{pmatrix}
\quad\text{and}\quad
\prho_{GL}(\m_1 ) = \begin{pmatrix} \u & 0 \\ 1 & 1 \end{pmatrix}
\quad \text{subject to}\\
0&= \s\u (\s -1)(\u-1)
     + d(1  - \s- \u + 4 \s \u - \s^2\u - \s\u^2 + \s^2\u^2)\\
\notag
&\quad + d^2 ( 2  - \s- \u + 2\s\u)+ d^3 \in \C[\s^{\pm 1}, \u^{\pm 1},d].
\end{align}
For each $\prho_{GL}$ as above, there is a unique $\PSL$--representation $\prho$ such that $\prho_{GL}$ and $\prho$ are identical as representations into the group of projective transformations of $\C P^1.$ Hence there is a bijection between the set of all of the above $GL_2(\C )$--representations of $\pi_1(\whl )$ with $d\neq 0$ and the set of all irreducible $\PSL$--representations of $\pi_1(\whl )$ (up to conjugation) which lift to $\SL.$

\begin{lem}
There is a birational isomorphism $\edefo : \D(\tri_\whl) \to \PTa (\whl ).$ 
\end{lem}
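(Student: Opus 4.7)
The plan is to read off $(\s,\u,d)$ directly from the matrix form of $\rho_Z$ given in the developing-map subsection, verify that this defines a rational map into $\PTa(\whl),$ and then exhibit a rational inverse obtained by recovering the shape parameters from the developed positions of ideal vertices.

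The matrix $\rho_Z(\b)$ is upper triangular, so $\rho_Z(\m_0)=\rho_Z(\b)^{-1}$ is as well. After a scalar rescaling passing from the $\SL$ lift to the $GL_2$ lift with bottom-right entry equal to $1,$ it takes the form $\bigl(\begin{smallmatrix}\s & * \\ 0 & 1\end{smallmatrix}\bigr)$ with $\s(Z)=H'(\m_0)(Z)=x'z''/y.$ Since $\rho_Z(\m_1)=\rho_Z(\d)$ does not fix $0,$ I would conjugate $\rho_Z$ by an upper-triangular translation $\bigl(\begin{smallmatrix}1 & b \\ 0 & 1\end{smallmatrix}\bigr)$ whose entry $b$ is the rational function of shape parameters (read off from the matrix $\rho_Z(\d)$) that sends the fixed point of $\rho_Z(\m_1)$ with eigenvalue $H'(\m_1)^{-1/2}$ to $0.$ A further scalar rescaling normalising the new bottom-right entry of $\rho_Z(\m_1)$ to $1$ puts it in the form $\bigl(\begin{smallmatrix}\u & 0 \\ 1 & 1\end{smallmatrix}\bigr)$ with $\u(Z)=H'(\m_1)(Z)=w'z''/y,$ while the new $(1,2)$-entry of $\rho_Z(\m_0)$ is $d(Z).$ Because $\rho_Z$ is a representation, the relator in (\ref{whl:fundamental group}) forces the triple $(\s(Z),\u(Z),d(Z))$ to satisfy the defining equation of $\PTa(\whl).$ Both $\D(\tri_\whl)$ and $\PTa(\whl)$ are irreducible of dimension $2$ (the former by the preceding proposition, the latter as an irreducible hypersurface in $(\C-\{0\})^2\times\C$), and the resulting rational map $\edefo$ is nonconstant, hence dominant.

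For the inverse direction, given $(\s,\u,d)\in\PTa(\whl)$ with $d\ne 0,$ the matrices (\ref{whl:good form}) act on $\CP^1=\partial\overline{\H}^3.$ Placing $\sigma_0$ at ideal vertices $(0,1,\infty,w)$ for an indeterminate $w$ and propagating along the face-pairings of Table~\ref{tab:Face pairings} exactly as in the developing-map subsection, one obtains the ideal vertices of every tetrahedron in Figure~\ref{fig:whl_triang_dev} as rational functions of $(\s,\u,d,w).$ The holonomy identity $\s=H'(\m_0)$ then pins down $w$ rationally in terms of $(\s,\u,d),$ and each shape parameter is a cross-ratio of four of these developed vertices, hence also a rational function of $(\s,\u,d).$ This assembles a rational map $\PTa(\whl)\dashrightarrow\D(\tri_\whl)$ that a direct substitution verifies is a two-sided inverse of $\edefo$ on a dense open set. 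The main obstacle is the combinatorial bookkeeping required to propagate the $GL_2$ action through the identifications in a way that keeps the expressions manageable; the cleanest organisation is to build the inverse over the fundamental domain obtained by applying only the face pairings in Table~\ref{tab:pairings for fundom}, so that the remaining pairings become $\prho(\a),\prho(\b),\prho(\c),\prho(\c\d)$ expressed in terms of $(\s,\u,d,w).$ Everything else — in particular the factorisation $\chidefo = q\circ\edefo$ through the map $q:\PTa(\whl)\to\PX_0(\whl),$ whose degree can now be extracted as a corollary — is then formal.
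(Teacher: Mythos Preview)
Your forward construction of $\edefo$ follows the paper's approach: conjugate and rescale $\rho_Z$ so that the images of $\m_0,\m_1$ land in the standard $GL_2$ form of $\PTa(\whl).$ The paper is slightly more explicit --- it records the result of the conjugation as
\[
\prho'_Z(\m_0)=\begin{pmatrix}\tfrac{w''y'}{x}&z-1\\0&1\end{pmatrix},\qquad
\prho'_Z(\m_1)=\begin{pmatrix}\tfrac{x}{w''z'}&0\\1&1\end{pmatrix},
\]
so that $\edefo(Z)=\bigl(\tfrac{w''y'}{x},\tfrac{x}{w''z'},z-1\bigr)$ --- but the idea is the same.

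Where you diverge is the inverse. The paper simply writes down an explicit rational map $(\s,\u,d)\mapsto(w,x,y,z)\in\D'(\tri_\whl),$ composes with $\varphi^{-1},$ and checks by substitution that the two compositions are the identity. Your route via redeveloping the octahedron from the $GL_2$ data is conceptually natural, but as stated it has a gap. Propagating through the face-pairings of Table~\ref{tab:pairings for fundom} does \emph{not} use the representation; those pairings build the octahedron combinatorially, and each new tetrahedron contributes a genuinely new ideal vertex. So after placing $\sigma_0$ you have three unknowns (your $w$ together with the developed positions of the two remaining octahedron vertices, playing the roles of $wz$ and $wyz$), not one. It is only the four \emph{boundary} face-pairings $\prho(\a),\prho(\b),\prho(\c),\prho(\c\d)$ --- now known as M\"obius maps in $(\s,\u,d)$ --- that give enough equations to solve for all three; the single constraint ``$\s=H'(\m_0)$'' does not pin down $w$ by itself. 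You also need to account for the conjugation mismatch: the $\PTa(\whl)$ normalisation fixes $\infty$ and $0$ for $\m_0$ and $\m_1,$ whereas the developing-map normalisation in the paper places different points at $0,1,\infty,$ so a further rational conjugacy enters before the octahedron can be read off. None of this is fatal, but carrying it through is roughly as much algebra as simply guessing the inverse formula and checking it, which is what the paper does.
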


\begin{proof}
To construct the map $\edefo: \D(\tri_\whl) \to \PTa (\whl ),$ conjugate the face
pairings $\rho_Z(\a),$ \ldots, $\rho_Z(\d)$ by a \emph{suitably chosen} matrix $A$ to obtain a form
analogous to (\ref{whl:good form}), and then adjust the resulting
representation by multiplication:
\begin{align*}
\prho'_Z(\m_0) &:=
\sqrt{\frac{x}{w''y'}}\  A \ \rho_Z(\b^{-1}) \ A^{-1} =
\begin{pmatrix} \frac{w''y'}{x} & z -1 \\ 0 & 1 \end{pmatrix}
\\
\text{and}\quad
\prho'_Z(\m_1 ) &:=
\sqrt{\frac{w''z'}{x}}\ A \ \rho_Z(\d)\ A^{-1} = 
\begin{pmatrix} \frac{x}{w''z'} &  0 \\ 1 & 1 \end{pmatrix}.
\end{align*}
It can be verified that $\prho'_Z$ defines a
$GL_2(\C)$--representation for each $Z \in \D(\tri_\whl).$
This induces a map $\edefo : \D(\tri_\whl) \to \PTa (\whl )$ defined by:
\begin{align}
\edefo (w,w',{\ldots} ,z'') = 
\bigg( \frac{w''y'}{x} , \frac{x}{w''z'}, z-1 \bigg).
\end{align}
An elementary calculation shows that this map is 1--1 and that its image is
dense in $\PTa (\whl ).$ A rational inverse is given by the following map
$\PTa (\whl ) \to \D'(\tri_\whl)$: 
{\small
\begin{align}
(\s, \u, d) \to 
\bigg( 
\frac{d + d^2 - d\s + \s\u + d\s\u}{(1 + d)(d + \s\u)},
\frac{\s\u}{d + d^2 - d\s + \s\u + d\s\u}, 
1 + \frac{d}{\s\u}, 
1 + d \bigg),
\end{align}
}
which composed with $\varphi: \D'(\tri_\whl) \to \D(\tri_\whl)$
gives a map $\edefo^{-1} : \PTa (\whl ) \to \D(\tri_\whl).$
Composition of the maps $\edefo$ and $\edefo^{-1}$ induces the identity on both $\D(\tri_\whl)$ and
$\PTa (\whl ),$ and this proves the lemma.
\end{proof}

The only singularity of $\D(\tri_\whl)$ is at infinity; at the ideal point where all of $w,x,y,z$ tend to one, and the only singularity of $\D' (\whl )$ is at the corresponding point $(w,x,y,z)=(1,1,1,1).$ The variety $\PTa (\whl )$ is an irreducible variety without singularities, and the above proof shows that it parametrises developing maps when thought of as a variety in $(\C-\{0\})^3.$ It is shown in Subsection \ref{whl:eigenvalues at (1,1,1,1)} that the intersection of $\PTa (\whl )$ with $c=0$ corresponds to the ideal point of $\D(\tri_\whl)$ where $w,x,y,z \to 1.$ Thus, it is a natural de-singularisation of $\D(\tri_\whl).$

\begin{cor}
$\chidefo\co \D(\tri_\whl) \to \PX_0 (\whl )$ is generically 4--to--1 and onto.
\end{cor}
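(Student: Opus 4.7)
The map $\chidefo$ factors as $\D(\tri_\whl) \xrightarrow{\edefo} \PTa(\whl) \xrightarrow{\bar{\chi}} \PX_0(\whl)$, where $\bar{\chi}$ sends $(\s,\u,d)$ to the $\PSL$-character of the normalised representation in~\eqref{whl:ptaut}. Since the preceding lemma gives that $\edefo$ is a birational isomorphism, it will suffice to show that $\bar{\chi}$ is generically $4$-to-$1$ and onto. Ontoness of $\chidefo$ (and hence of $\bar{\chi}$) is immediate from the theorem at the end of~\S\ref{sec: developing}: the entire Dehn surgery component $\PX_0(M)$ lies in the image of $\chi_\tri$ for any ideal triangulation of a cusped orientable hyperbolic $3$-manifold whose edges are essential, and $\tri_\whl$ satisfies these hypotheses.

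For the generic degree I would fit $\bar{\chi}$ into the commutative square with $\Ta(\whl)$ and $\PTa(\whl)$ on top, $\X_0(\whl)$ and $\PX_0(\whl)$ below, whose horizontal arrows are $q_1$ and the canonical quotient $\X_0 \to \PX_0$, and whose vertical arrows are the character maps. Three of the four generic degrees are already in hand. First, $q_1$ is generically $4$-to-$1$ via the sign-change action $(s,u,c)\mapsto(\pm s,\pm u,\pm c)$ from~\S\ref{whl:tautologicum}. Second, the character map $\Ta(\whl) \to \X_0(\whl)$ is the $4$-to-$1$ branched cover described in~\S\ref{whl:tautologicum}, arising from the two-fold ambiguity in labelling the eigenvalues of each of $\rho(\m_0)$ and $\rho(\m_1)$. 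Third, $\X_0(\whl) \to \PX_0(\whl)$ is generically $4$-to-$1$ because Thurston's theorem exhibits an $\SL$-lift of $\prho_0$, whence the irreducible component $\R_0$ surjects onto $\PR_0$ with generic fiber $\Hom(\pi_1(\whl),\Z_2) \cong (\Z_2)^2$ acting freely by scalar multiplication on irreducible representations. A diagram chase then shows that the composition $\Ta(\whl) \to \PX_0(\whl)$ has generic fiber of size $16$ along either route, forcing $\bar{\chi}$ to be generically $4$-to-$1$.

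The main obstacle I anticipate is verifying this last degree statement, namely that $\X_0(\whl) \to \PX_0(\whl)$ is genuinely $4$-to-$1$. This requires that every $\PSL$-representation in the Dehn surgery component $\PR_0$ lift globally to $\SL$, not just the complete hyperbolic representation. Locally near $\prho_0$ this is Thurston's theorem; to extend the lift globally one uses the irreducibility of $\R_0$ together with a dimension count showing that $\R_0 \to \PR_0$ is a $4$-fold branched cover rather than one of smaller degree. The remainder of the argument is then a routine diagram chase.
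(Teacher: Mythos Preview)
Your factoring through $\PTa(\whl)$ matches the paper, and your surjectivity argument via the general theorem in \S\ref{sec: developing} is valid. The paper argues surjectivity slightly differently: since $\PTa(\whl)$ is irreducible, its image in $\PX(\whl)$ is irreducible; and since the birational isomorphism $\edefo$ shows $\PTa(\whl)$ contains a discrete and faithful representation, that image lies in $\PX_0(\whl)$. Together with finiteness of the map, dimension reasons force the image to be all of $\PX_0(\whl)$.

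For the degree, your diagram chase through $\Ta(\whl)$ and $\X_0(\whl)$ works, but it is a genuine detour compared with the paper's direct argument. The paper simply observes that for an irreducible $\prho$ with $(\tr\prho(\m_0))^2\ne 4\ne(\tr\prho(\m_1))^2$, each $\prho(\m_i)$ has two distinct fixed points on $\C P^1$, and the normal form \eqref{whl:ptaut} pins one fixed point of $\prho(\m_0)$ at $\infty$ and one of $\prho(\m_1)$ at $0$; hence exactly $2\times 2=4$ conjugates of $\prho$ lie in $\PTa(\whl)$. Since such $\prho$ exist, this gives the generic degree~$4$ immediately. This is precisely the same mechanism behind the statement in \S\ref{whl:tautologicum} that $\Ta(\whl)\to\X_0(\whl)$ is a $4$-to-$1$ branched cover, just applied at the $\PSL$ level. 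The payoff is that it completely avoids your flagged obstacle: determining the degree of $\X_0(\whl)\to\PX_0(\whl)$ requires knowing that the $\Hom(\pi_1(\whl),\Z_2)$-action preserves the component $\X_0(\whl)$ (equivalently, that there is a unique component of irreducible $\SL$-characters), a fact the paper only establishes later in Appendix~\ref{whl:Character varieties}. Your route is correct in the end, but it front-loads a nontrivial structural fact about $\X(\whl)$ that the direct argument does not need.
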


\begin{proof}
Note that $\chidefo\co \D(\tri_\whl) \to \PX (\whl )$ factors through
$\PTa (\whl),$ and hence it is enough to show that the map
$\PTa (\whl) \to \PX_0 (\whl )$ is generically 4--to--1 and onto. Firstly,
$\PTa (\whl)$ is irreducible, and hence its image in $\PX (\whl)$ is
irreducible. Secondly, the above lemma implies that $\PTa (\whl)$ contains a
discrete and faithful representation, and hence maps to the Dehn surgery
component. Since there are $\prho \in \PTa (\whl)$ such that
$(\tr\prho (\m_0))^2 \ne 4 \ne (\tr\prho (\m_1))^2,$ there are generically four
elements of each conjugacy class of representations contained in
$\PTa (\whl).$ Hence, the degree of $\chidefo$ is four.
\end{proof}


\subsection{Eigenvalue maps}
\label{whl:Eigenvalue maps}

As in \cite{tillus_ei}, denote the respective eigenvalue varieties by $\Ei (\whl )$ and $\PEi (\whl ).$ The aim of this section is to show that the subvariety $\PEi_0 (\whl)$ corresponding to $\PX_0(\whl )$ is birationally equivalent to $\D(\tri_\whl).$ The following lemma establishes suitable affine coordinates and maps needed for this. 

\begin{lem}\label{whl: eigenvalue maps lem}
There are a quotient map $\qe_3\co \Ei (\whl ) \to \PEi (\whl )$ corresponding
to the action of $\Hom(\pi_1(\whl ), \Z_2),$ and
\emph{eigenvalue maps} $\ee \co \Ta (\whl ) \to \Ei (\whl )$
and $\pee \co \PTa (\whl ) \to \PEi (\whl )$ such that the following diagram
commutes:
\begin{center}
$ 
\begin{CD}
      \Ta(\whl )    @>\ee>>     \Ei_0(\whl )   \\ 
       @V\qe_1 VV          @VV\qe_3 V\\
     \PTa (\whl )   @>\pee>>     \PEi_0 (\whl )
\end{CD}
$
\end{center}
\end{lem}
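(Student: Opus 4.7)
The plan is to construct all three new maps explicitly in coordinates and to verify commutativity by direct substitution. First I would set $q_3\co \Ei(\whl)\to\PEi(\whl)$ to be the coordinate-wise squaring map
\[
q_3(m_0,l_0,m_1,l_1)=(m_0^2,l_0^2,m_1^2,l_1^2),
\]
and check that this lands in $\PEi(\whl)$. The key is that $I_{\m_i}^2=m_i^2+2+m_i^{-2}$ and the $\PSL$ eigenvalue variety is defined by $I^2_{\m_i}=M_i+2+M_i^{-1}$ (and analogously for $\l_i$), which forces $M_i=m_i^2$ and $L_i=l_i^2$; so the squaring map matches the coordinates of $\PEi$. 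The map $q_3$ realises the quotient by the $\Hom(\pi_1(\whl),\Z_2)\cong\Z_2^2$ action of twisting an $\SL$ representation by a sign character, since such twisting multiplies each eigenvalue by $\pm 1$ and leaves its square invariant.

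Next I would construct the eigenvalue map $e\co\Ta(\whl)\to\Ei_0(\whl)$ by reading eigenvalues directly off the normal form (\ref{whl:taut}). Since $\rho(\m_0)$ is upper triangular with eigenvalues $s,s^{-1}$ (fixed point at $\infty$) and $\rho(\m_1)$ is lower triangular with eigenvalues $u,u^{-1}$ (fixed point at $0$), the distinguished meridian eigenvalues are simply $m_0=s$ and $m_1=u$. Because $\l_0^t$ commutes with $\m_0$ in $\pi_1(\whl)$, the matrix $\rho(\l_0^t)$ shares the fixed point of $\rho(\m_0)$; on the open set where $s^2\neq 1$ this forces $\rho(\l_0^t)$ to be upper triangular in the same basis, and its top-left entry $l_0(s,u,c)$ is a rational function giving the corresponding eigenvalue. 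The analogous argument with the $\l_1^t$-$\m_1$ pair produces $l_1(s,u,c)$ on the subset $u^2\neq 1$, and I set $e(s,u,c)=(s,l_0(s,u,c),u,l_1(s,u,c))$. To compute $l_0, l_1$ concretely, write $\l_0^t,\l_1^t$ as words in $\a,\b,\c,\d$ using Subsection \ref{subsec:Fundamental group}, multiply out the matrix products from (\ref{whl:taut}), and read off the triangular entries. Irreducibility of $\Ta(\whl)$ together with the existence of a point mapping onto the eigenvalues of the discrete faithful representation shows that the image of $e$ is contained in (and dominates) the Dehn surgery component $\Ei_0(\whl)$.

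The map $\pee\co\PTa(\whl)\to\PEi_0(\whl)$ is constructed identically using the $GL_2$ form (\ref{whl:good form}) with coordinates $\s=s^2,\ \u=u^2,\ d=csu$. The same triangularity argument shows that the squared longitude eigenvalues $L_0=l_0^2$ and $L_1=l_1^2$ descend to rational functions in $(\s,\u,d)$ (they are invariant under the $\Z_2^2$-action of Subsection \ref{whl:tautologicum}, so they can contain no odd monomials in $s,u,c$ outside of $csu$). Commutativity of the square then reduces to the identity
\[
q_3(e(s,u,c))=(s^2,l_0^2,u^2,l_1^2)=\pee(s^2,u^2,csu)=\pee(q_1(s,u,c)),
\]
which holds by the construction of $\pee$ as the $\PSL$ version of $e$.

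The main obstacle is the bookkeeping in the middle step: verifying algebraicity of $l_0$ and $l_1$ on $\Ta(\whl)$ and establishing that their squares descend to regular (equivalently, $\Z_2^2$-invariant rational) functions of $(\s,\u,d)$. This is a finite symbolic computation: it amounts to writing $\l_0^t, \l_1^t$ as the explicit words from Subsection \ref{subsec:Fundamental group} and extracting diagonal entries of the resulting products. Once this computation is carried out, commutativity of the diagram and containment of the images in the Dehn surgery components are formal consequences of the preceding setup together with the irreducibility of $\Ta(\whl)$ and $\PTa(\whl)$.
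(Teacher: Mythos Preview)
Your overall strategy---read off eigenvalues as upper-left entries of triangular matrices, use the commutation of $\m_i$ with $\l_i^t$ to force triangularity of $\rho(\l_i^t)$ on the dense open set where $\rho(\m_i)$ is non-parabolic, and deduce containment in the Dehn surgery component from irreducibility---matches the paper's approach. But your definition of $q_3$ is wrong, and this propagates to your description of $\pee$.

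You set $q_3(m_0,l_0,m_1,l_1)=(m_0^2,l_0^2,m_1^2,l_1^2)$ and assert that twisting by a sign character $\epsilon\in\Hom(\pi_1(\whl),\Z_2)$ multiplies each eigenvalue by $\pm 1$. That is false for the longitudes. The topological longitudes $\l_0^t,\l_1^t$ are null-homologous (explicitly, each is a balanced word in $\m_0^{\pm 1},\m_1^{\pm 1}$), so $\epsilon(\l_i^t)=1$ for every $\epsilon$. Hence the $\Z_2^2$ action on $\Ei(\whl)$ is $(s,t,u,v)\mapsto(\pm s,\,t,\,\pm u,\,v)$, and the genuine quotient map is $q_3(s,t,u,v)=(s^2,t,u^2,v)$. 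Your map squares too much: it is $16$-to-$1$ generically rather than $4$-to-$1$, and it does not realise the stated quotient.

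The paper turns this observation into the construction of $\pee$. Because $\l_i^t$ has zero total exponent in each meridian, the $GL_2$ image $\prho_{GL}(\l_i^t)$ has determinant $1$ and in fact equals $\prho(\l_i^t)$ for the associated $\PSL$ representation. So the upper-left entry $\vartheta_{\l_i^t}(\prho_{GL})$ is already the longitude eigenvalue itself, well-defined independently of the sign choices for $\prho(\m_0),\prho(\m_1)$---not merely its square. This is why $\PEi(\whl)$ can be given coordinates $(\s,t,\u,v)$ with only the meridian coordinates squared, and why $q_3$ leaves $t$ and $v$ alone. Your claim that ``$L_i=l_i^2$ descend to rational functions of $(\s,\u,d)$'' is true but beside the point: the sharper fact is that $l_i$ itself descends, and this is what makes $q_3$ the $\Hom(\pi_1(\whl),\Z_2)$-quotient.
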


\begin{proof}
Let $\vartheta_\gamma \co \Ta (\whl ) \to \C$ be
the holomorphic map which takes $\rho$ to the upper left entry of
$\rho (\gamma).$ Then define
$\ee \co \Ta (\whl ) \to \Ei (\whl )$ to be the map
\begin{equation}\label{whl:eigenvalue map}
\ee (\rho ) = (\vartheta_{\m_0}(\rho ), \vartheta_{\l^t_0}(\rho ),
                 \vartheta_{\m_1}(\rho ), \vartheta_{\l^t_1}(\rho )).
\end{equation}
Since every $\rho \in \Ta (\whl )$ is triangular on the peripheral
subgroups, it follows that this map is well--defined. Denote the affine
coordinates of $\Ei (\whl )$ by $(s,t,u,v),$ so they correspond to
eigenvalues of $\m_0,$ $\l^t_0,$ $\m_1$ and $\l^t_1.$

Denote the map which takes
$\prho_{GL}$ to the upper left entry of $\prho_{GL}(\gamma )$ by
$\vartheta_\gamma$ as well, and let 
$\pee \co \PTa (\whl ) \to \PEi (\whl )$ be the map
\begin{equation}
\pee (\prho_{GL} ) = (\vartheta_{\m_0}(\prho_{GL} ),
\vartheta_{\l^t_0}(\prho_{GL} ),
                 \vartheta_{\m_1}(\prho_{GL} ), \vartheta_{\l^t_1}(\prho_{GL} )).
\end{equation}
To verify that this map has the right range, note that the upper left entries of $\prho_{GL} (\m_0)$ and 
$\prho_{GL} (\m_1)$ are the squares of the eigenvalues of matrices representing the associated $\PSL$--representation. The longitudes are the following words in the meridians:
\begin{align}
\l^t_0 &=  \m_0^{-1}\m_1\m_0\m_1^{-1}\m_0^{-1}\m_1^{-1}\m_0\m_1\\
\l^t_1 &=  \m_1^{-1}\m_0\m_1\m_0^{-1}\m_1^{-1}\m_0^{-1}\m_1\m_0,
\end{align}
and hence $\prho_{GL} (\l^t_i) = \prho (\l^t_i)$ for any $\prho_{GL}$ and its corresponding unique $\PSL$--representation $\prho.$ In particular, $\vartheta_{\l^t_i} (\prho_{GL})$ is an eigenvalue of
$\prho_{GL} (\l^t_i) = \prho (\l^t_i)$ and it is independent of the choice of the signs of matrices representing $\prho (\m_0)$ and  $\prho (\m_1).$ It follows that $\PEi (\whl )$ can be given the affine coordinates $(\s, t, \u, v).$

The natural quotient map which makes the above diagram commute is therefore $\qe_3 \co \Ei (\whl ) \to \PEi (\whl )$ defined by $\qe_3 (s,t,u,v) = (s^2,t,u^2,v).$ This map clearly corresponds to the action of $\Hom(\pi_1(\whl ), \Z_2)$ on $\Ei (\whl ).$

Since any irreducible $\SL$--representation is conjugate to a representation in $\Ta(\whl),$ the closure of the image of $\ee$ is the component of $\Ei(\whl)$ corresponding to the Dehn surgery component $\X_0(\whl).$ It follows that the closure of the image of $\pee$  corresponds to $\PX_0(\whl).$ In particular the composite mapping $\Psi= \pee \circ \edefo \co \D(\tri_\whl) \to \PEi_0 (\whl )$ is onto.
\end{proof}

From the face pairing $\rho_Z\co \pi_1(M)\to \PSL,$ one computes
$\vartheta_{\m_0} (\prho_Z) = \frac{x'z''}{y},$
$\vartheta_{\l^t_0} (\prho_Z) = xy,$
$\vartheta_{\m_1} (\prho_Z) = \frac{w'z''}{y}$ and
$\vartheta_{\l^t_1} (\prho_Z) = wy.$
The map $\Psi \co \D(\tri_\whl) \to \PEi (\whl)$ with respect to the chosen
coordinates is therefore given by
\begin{align} \label{whl:holo map}
\Psi (w,w',w'',x,x',x'',y,y',y'',z,z',z'') = 
\big( \frac{x'z''}{y},x y, \frac{w'z''}{y}, w y \big)
\end{align} 
We have $\Psi\varphi^{-1}(i,i,i,i) = (1,-1,1,-1)$ at the complete structure.

\begin{lem} \label{whl:degree one lemma}
The map $\Psi \co \D(\tri_\whl) \to \PEi_0 (\whl)$ is a birational isomorphism.
\end{lem}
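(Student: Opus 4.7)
The plan is to exhibit an explicit rational inverse of $\Psi.$ Since $\Psi$ is surjective by Lemma~\ref{whl: eigenvalue maps lem} and $\D(\tri_\whl)$ is irreducible of complex dimension two, a rational inverse will force $\PEi_0(\whl)$ to have the same dimension and $\Psi$ to be a birational isomorphism.

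To construct the inverse, I would start from the coordinate description in~\eqref{whl:holo map}, namely $s=x'z''/y,$ $t=xy,$ $u=w'z''/y,$ $v=wy,$ and take ratios to obtain
\[
\frac{v}{t}=\frac{w}{x}, \qquad \frac{s}{u}=\frac{x'}{w'}=\frac{1-w}{1-x},
\]
where the last equality uses the parameter equations~\eqref{eq:whl:z'}. These are two linear equations in $w$ and $x,$ which I would solve as
\[
x=\frac{t(s-u)}{st-uv}, \qquad w=\frac{v(s-u)}{st-uv}.
\]
Then $y=t/x$ is immediate, and $z$ is recovered from the edge equation $wxyz=1$ in~\eqref{whl:param1}. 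Composing the resulting rational map $\PEi_0(\whl)\dashrightarrow \D'(\tri_\whl)$ with the birational map $\varphi^{-1}\co \D'(\tri_\whl)\dashrightarrow \D(\tri_\whl)$ from \S\ref{sec:defo equations} produces the candidate $\Psi^{-1}\co \PEi_0(\whl)\dashrightarrow \D(\tri_\whl).$ Verifying $\Psi^{-1}\circ\Psi=\mathrm{id}$ on a dense open subset of $\D(\tri_\whl)$ is then a direct substitution, after which irreducibility and the dimension count give the conclusion.

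The delicate step will be to confirm that the indeterminacy locus of $\Psi^{-1}$ is a proper subvariety of $\PEi_0(\whl),$ i.e.\ that neither $s-u$ nor $st-uv$ vanishes identically on the image of $\Psi.$ This is not vacuous, because at the complete hyperbolic structure one has $(s,t,u,v)=(1,-1,1,-1),$ where both denominators vanish. To handle this, I would observe that $s-u=(x'-w')z''/y$ vanishes on $\D(\tri_\whl)$ precisely along $\{w=x\},$ which is a proper subvariety: a perturbation of the complete structure along a Dehn surgery family supported on a single cusp breaks the $w=x$ symmetry. A parallel argument handles $st-uv.$ Together with the rational inverse constructed above, this completes the proof.
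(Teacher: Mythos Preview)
Your proof is correct and takes essentially the same approach as the paper: both arguments hinge on showing that $\Psi$ is generically injective off the proper subvariety $\{w=x\}\subset\D(\tri_\whl)$, with the paper phrasing this as a degree-one fibre calculation and recording your explicit inverse formula immediately after the proof. To sharpen your final paragraph, you can exhibit the curve $(w,x,y,z)=(w,-w^{-1},w,-w^{-1})\subset\D'(\tri_\whl)$---precisely the single-cusp Dehn surgery family you allude to---on which $w\neq x$ whenever $w\neq\pm i$; this is the explicit witness the paper uses.
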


\begin{proof}
Since $\Psi$ is a regular map and $\D(\tri_\whl)$ is irreducible, it follows that the closure of its image, which has been identified as $\PEi_0(\whl ),$ is irreducible. Thus, $\Psi \co \D(\tri_\whl) \to\PEi_0 (\whl )$ is a regular map of irreducible varieties. It remains to show that it has degree one. 

Assume that $\Psi\varphi^{-1}(w_0,x_0,y_0,z_0)=\Psi\varphi^{-1}(w_1,x_1,y_1,z_1),$ where $(w_i,x_i,y_i,z_i)$ are two regular points of $\varphi$ on $\D'(\tri_\whl).$ An elementary calculation shows that the points are either identical or we have $w_0=x_0$ and $w_1=x_1.$ Thus, all points on which $\Psi$ does not have degree one are contained on the hypersurface $w=x.$ Since for any $w \in\C-\{0,\pm 1\},$ the point $\varphi^{-1} (w,-w^{-1},w,-w^{-1})$ is contained in $\D(\tri_\whl),$ $\D(\tri_\whl)$ is not contained in this hypersurface.
\end{proof}

An inverse $\PEi_0(\whl ) \to \D(\tri_\whl)$ taking
$(\s,t,\u,v) \to (w,w',{\ldots} ,z'')$ is determined by the following map
$\PEi_0(\whl ) \to \D' (\whl )$ which can be computed from
(\ref{whl:holo map}):
\begin{equation*}
(\s,t,\u,v) \to
\Bigg(
\frac{v(\s - \u)}{\s t - \u v},
\frac{t(\s - \u)}{\s t - \u v},
\frac{\s t-\u v}{\s-\u},
\frac{\s t-\u v}{tv(\s-\u)}
\Bigg).
\end{equation*}
This map is not regular on a 1--dimensional subvariety of $\PEi_0(\whl),$
which is defined by the following three equations: 
\begin{align} \label{whl: inverse not defined}
\s = \u, && t = v, && 0 = \s - t + \s t + \s^2 t - \s^2 t^2 - \s^3 t^2 + \s^4
t^2 - \s^3 t^3.
\end{align}
See Subsection \ref{whl:compute eigen} for a computation of the eigenvalue varieties and this subvariety.


\section{Embedded surfaces}
\label{whl:embedded normal surfaces}

This section gives a complete description of the space $\FH$ of all essential surfaces in the Whitehead link complement, and deduces the associated boundary curve space $\BC (\whl)$ and the unit ball of the Thurston norm. This is compared with previous work of Floyd and Hatcher~\cite{fh}, Lash~\cite{la} and Hoste and Shanahan~\cite{hs}.

The main tools used to analyse incompressible surfaces in the projective admissible solution space $\N(\tri_\whl)$ of spun-normal surface theory are a criterion due to Dunfield that determines which spun-normal surfaces are essential, and results by Walsh~\cite{wa} and Kang and Rubinstein~\cite{KR-2015} on the normalisation of essential surfaces.


\subsection{Complete description of the projective admissible solution space}

The convention for quadrilateral coordinates for orientable manifolds in \cite{tillmann08-finite} is used. Table \ref{tab:whl:quad types} indicates the position of the quadrilaterals in the tetrahedra. For reference, we also add the shape parameters, where we write $w = z_0,$ $x = z_1,$ $y = z_2,$ and $z = z_3.$ 
The notation $ij/kl$ means that the particular quadrilateral type separates the vertices $i$ and $j$ from the vertices $k$ and $l.$ 
\begin{table}[h]
\begin{center}
\begin{tabular}{| l |c | c | c |}
\hline
Quadrilateral & $q_i$ & $q'_i$ & $q''_i$ \\
\hline
Separates     & $01/23$ & $03/12$ & $02/13$ \\
\hline
Edge label & $z_i$ & $z'_i$ & $z''_i$ \\
\hline
\end{tabular}
\end{center}
\caption{Quadrilateral types}
\label{tab:whl:quad types}
\end{table}

The $Q$--matching equations can be worked out directly from the triangulation or by using their relationship with the gluing equations described in \S\ref{subsec:relationship between gluing and matching}. There are the following three equations:
\begin{align}
\tag{\text{red \& green}}   0 &= q'_0-q''_0 + q'_1-q''_1 + q'_2-q''_2 + q'_3-q''_3,  \\
\tag{\text{black}} 0 &= q_0-2q'_0+q''_0 + q_1-2q'_1+q''_1-q_2+q''_2-q_3+q''_3
\\
\tag{\text{blue}}  1 &= -q_0+q''_0-q_1+q''_1+q_2-2q'_2+q''_2+q_3-2q'_3+q''_3
\end{align}
These are equivalent to the following two, which correspond to \eqref{whl: defo simple relations}:
\begin{align}
\label{whl:Q-match1} 0 &= q'_0-q''_0 + q'_1-q''_1 + q'_2-q''_2 + q'_3-q''_3,  \\
\label{whl:Q-match2} 0 &= q_0-q'_0 + q_1-q'_1 - q_2+q'_2 - q_3+q'_3. 
\end{align}
To obtain an admissible solution, one now sets two quadrilateral coordinates from each tetrahedron equal to zero and solves the above equations subject to this constraint. Hence the set $\N(\tri_\whl)$ can be computed by solving $3^4 = 81$ systems of linear equations. This is automated by feeding the triangulation to {\tt Regina}\rm. Alternatively, given its description as a tropical pre-variety in Proposition~\ref{comb:homeo}, one can use {\tt gfan }\rm to compute $\N(\tri_\whl)$ from the defining equation of $\D (\tri_\whl)$ using the command \texttt{tropical\_intersection}.

\begin{figure}[t]
  \begin{center}
      \includegraphics[width=12cm]{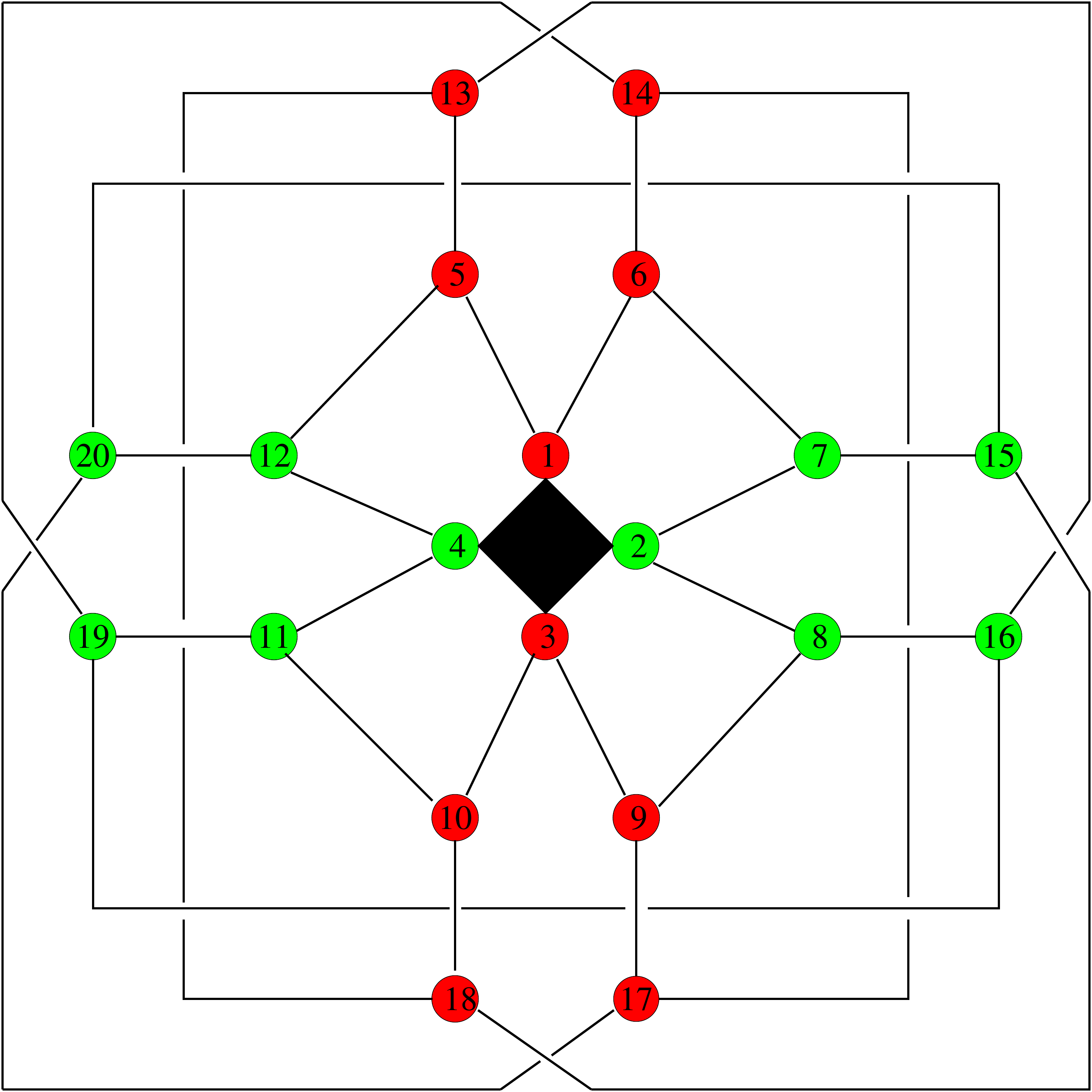}
   \end{center}
  \caption{Projective admissible solution space $\N(\tri_\whl)$: The labels of the nodes correspond to the vertex solutions; the shown PL arcs between them can be realised as geodesics in $S^{11}$ with pairwise disjoint interior, and the quadrilateral as a convex spherical quadrilateral.}
  \label{fig:whl_admissible}
\end{figure}

The set $\N(\tri_\whl)$ is a finite union of convex spherical polytopes in $S^{11}\subset \RR^{12}.$ It turns out that there are 28 geodesic arcs and one geodesic quadrilateral, spanned by a total of 20 vertices. These are indicated in Figure~\ref{fig:whl_admissible}, where geodesic arcs are represented as PL arcs.

The vertices are denote by $V_1,{\ldots} ,V_{20},$ and called \emph{vertex solutions}. These correspond to extremal solutions to the linear equations. Each vertex solution $V_i$ is rescaled by a positive real to obtain a (minimal) integer solution, and the resulting normal surface is denoted by $F_i.$ Hence the normal $Q$--coordinate $N(F_i)$ is a multiple of $V_i.$
These normal surfaces are described in Table~\ref{table:white_normal} and the information given in the table is as follows:


\begin{sidewaystable}
{\small
\begin{center}
\begin{tabular}{| r || l|l|l || l|l|l || l|l|l || l|l|l || c | l | c | r r r r|}
\hline
Vertex & $q_0$ & $q'_0$ & $q''_0$ & $q_1$ & $q'_1$ & $q''_1$
& $q_2$ & $q'_2$ & $q''_2$ & $q_3$ & $q'_3$ & $q''_3$
& type & class & $\partial$--curves & $\nu(\l^t_0),$& $-\nu(\m_0),$& $\nu(\l^t_1),$& 
$-\nu(\m_1)$ \\
\hline\hline
1 & 
1 & 0 & 0 & 0 & 0 & 0 & 1 & 0 & 0 & 0 & 0 & 0 & 
$T_1$ & $N'_1$ & $0/1$ & 0,&0,&0,&--1 \\
2 & 
0 & 0 & 0 & 1 & 0 & 0 & 1 & 0 & 0 & 0 & 0 & 0 &
$T_1$ & $N_1$ & $1/0$ & 0,&--1,&0,&0 \\
3 & 
0 & 0 & 0 & 1 & 0 & 0 & 0 & 0 & 0 & 1 & 0 & 0 &
$T_1$ & $N'_1$ & $0/1$ &0,&0,&0,&1 \\
4 & 
1 & 0 & 0 & 0 & 0 & 0 & 0 & 0 & 0 & 1 & 0 & 0 &
$T_1$ & $N_1$ & $1/0$ & 0,&1,&0,&0 \\
\hline\hline
5 & 
1 & 0 & 0 & 0 & 1 & 0 & 0 & 0 & 0 & 0 & 0 & 1 &
$S_3$ & $N'_2$ & $2/1$ &--2,&0,&0,&--1 \\
6 & 
0 & 0 & 0 & 0 & 0 & 1 & 1 & 0 & 0 & 0 & 1 & 0 &
$S_3$ & $N'_2$ & $2/1$ & 2,&0,&0,&--1 \\
7 & 
0 & 0 & 1 & 0 & 0 & 0 & 1 & 0 & 0 & 0 & 1 & 0 &
$S_3$ & $N_2$ & $1/2$ & 0,&--1,&2,&0 \\
8 & 
0 & 1 & 0 & 1 & 0 & 0 & 0 & 0 & 0 & 0 & 0 & 1 &
$S_3$ & $N_2$ & $1/2$ & 0,&--1,&--2,&0 \\
9 & 
0 & 1 & 0 & 1 & 0 & 0 & 0 & 0 & 1 & 0 & 0 & 0 &
$S_3$ & $N'_2$ & $2/1$ & 2,&0,&0,&1 \\
10 & 
0 & 0 & 1 & 0 & 0 & 0 & 0 & 1 & 0 & 1 & 0 & 0 &
$S_3$ & $N'_2$ & $2/1$ & --2,&0,&0,&1 \\
11 & 
0 & 0 & 0 & 0 & 0 & 1 & 0 & 1 & 0 & 1 & 0 & 0 &
$S_3$ & $N_2$ & $1/2$ & 0,&1,&--2,&0\\
12 & 
1 & 0 & 0 & 0 & 1 & 0 & 0 & 0 & 1 & 0 & 0 & 0 &
$S_3$ & $N_2$ & $1/2$ & 0,&1,&2,&0 \\
\hline\hline
13 & 
0 & 0 & 0 & 0 & 1 & 0 & 0 & 1 & 0 & 0 & 0 & 2 &
$R_2$ & $N'_3$ & $1/1$ & --4,&--1,&--2,&--1 \\
14 & 
0 & 1 & 0 & 0 & 0 & 2 & 0 & 0 & 0 & 0 & 1 & 0 &
$R_2$ & $N'_3$ & $1/1$ & 4,&1,&--2,&--1 \\
15 & 
0 & 0 & 2 & 0 & 1 & 0 & 0 & 0 & 0 & 0 & 1 & 0 &
$R_2$ & $N_3$ & $1/1$ & --2,&--1,&4,&1 \\
16 & 
0 & 1 & 0 & 0 & 0 & 0 & 0 & 1 & 0 & 0 & 0 & 2 &
$R_2$ & $N_3$ & $1/1$ & --2,&--1,&--4,&--1 \\
17 & 
0 & 1 & 0 & 0 & 0 & 0 & 0 & 0 & 2 & 0 & 1 & 0 &
$R_2$ & $N'_3$ & $1/1$ & 4,&1,&2,&1 \\
18 & 
0 & 0 & 2 & 0 & 1 & 0 & 0 & 1 & 0 & 0 & 0 & 0 &
$R_2$ & $N'_3$ & $1/1$ & --4,&--1,&2,&1 \\
19 & 
0 & 1 & 0 & 0 & 0 & 2 & 0 & 1 & 0 & 0 & 0 & 0 &
$R_2$ & $N_3$ & $1/1$ & 2,&1,&--4,&--1 \\
20 & 
0 & 0 & 0 & 0 & 1 & 0 & 0 & 0 & 2 & 0 & 1 & 0 &
$R_2$ & $N_3$ & $1/1$ & 2,&1,&4,&1\\
\hline
\hline
&&&&&&&&&&&&&\multicolumn{7}{|l|}{}\\
Angle & $\alpha_0$ & $\alpha'_0$ & $\alpha''_0$ & $\alpha_1$ & $\alpha'_1$ & $\alpha''_1$
& $\alpha_2$ & $\alpha'_2$ & $\alpha''_2$ & $\alpha_3$ & $\alpha'_3$ & $\alpha''_3$ &
\multicolumn{7}{|l|}{dual to vertex surfaces numbers} \\
\hline
\hline
$\alpha^+$ & 
0 & 0 & $\pi$ & $\pi$ & 0 & 0 & 0 & 0 & $\pi$ & $\pi$ & 0 & 0 &
\multicolumn{7}{|l|}{1, 5, 6, 13, 14, 16, 19} \\
\hline
$\alpha^-$ & 
0 & $\pi$ & 0 & $\pi$ & 0 & 0 & 0 & $\pi$ & 0 & $\pi$ & 0 & 0 &
\multicolumn{7}{|l|}{1, 5, 6, 7, 12, 15, 20} \\
\hline
$\beta^+$ & 
$\pi$ & 0 & 0 &  0 & 0  & $\pi$ & 0 &  0 & $\pi$ & $\pi$ & 0 & 0 &
\multicolumn{7}{|l|}{2, 7, 8, 13, 15, 16, 18} \\
\hline
$\beta^-$ & 
$\pi$ & 0 & 0 &  0 & $\pi$ & 0 & 0 & $\pi$ & 0 & $\pi$ & 0 & 0 &
\multicolumn{7}{|l|}{2, 6, 7, 8, 9, 14, 17} \\
\hline
$\gamma^+$ & 
$\pi$ & 0 & 0 &  0 &  0 &$\pi$ &  $\pi$& 0 & 0 &  0 & 0 &$\pi$ &
\multicolumn{7}{|l|}{3, 9, 10, 15, 17, 18, 20} \\
\hline
$\gamma^-$ & 
$\pi$ & 0 & 0 &  0 & $\pi$ & 0 &  $\pi$ &0 & 0 &  0 &$\pi$ & 0 &
\multicolumn{7}{|l|}{3, 8, 9, 10, 11, 16, 19} \\
\hline
$\delta^+$ & 
 0 & 0 &$\pi$ &  $\pi$ &0 &  0 &  $\pi$ &0 & 0 &  0 & 0 &$\pi$ &
\multicolumn{7}{|l|}{4, 11, 12, 14, 17, 19, 20} \\
\hline
$\delta^-$ & 
 0 &$\pi$ & 0 & $\pi$ & 0 &  0 &$\pi$ & 0  & 0 & 0 & $\pi$ & 0 &
\multicolumn{7}{|l|}{4, 5, 10, 11, 12, 13, 18} \\
\hline\hline
\end{tabular}
\end{center}
} 
\caption{Minimal representatives for vertex solutions and some dual angle structures}
\label{table:white_normal}
\end{sidewaystable}


First, the normal $Q$--coordinate $N(F_i)$ is given, then the topological type of $F_i,$ where $T_1$ stands for a once--punctured torus, $S_3$ for a thrice--punctured sphere, $R_2$ for a twice--punctured $\RR P^2.$ The abbreviations $K_i$ and $T_i$ will be used in subsequent figures for an $i$--punctured Klein bottle and an $i$--punctured torus respectively, and $G_2$ denotes a (closed) genus two surface.

The column \emph{class} specifies the equivalence class (defined in Subsection \ref{whl:Equivalence classes}) that the projective normal $Q$--coordinate $V_i$ of $F_i$ belongs to. 

The column \emph{$\partial$--curves} encodes the number of boundary components on the respective cusps; if there are $i$ boundary components on the (red) cusp 0, and $j$ on the (green) cusp 1, this is written as $i/j.$

Last, the corresponding boundary curves are given as they are computed from the chosen (oriented, topological) peripheral system. The boundary curves are determined by the signed intersection numbers of the peripheral elements with the spun-normal surface. We here summarise how this is computed; the details can be found in \cite[\S4.2]{defo} and \cite[\S3.1]{tillmann08-finite}. Let $\gamma$ be a closed simplicial path on a cusp cross section with respect to the triangulation induced by $\tri.$ The $Q$--modulus of a vertex with label $z_i$ is $q''_i-q'_i$, of a vertex with label $z'_i$ is $q_i-q''_i$, and of a vertex with label $z''_i$ is $q'_i-q_i$. The linear functional $\nu(\gamma)$ is defined to be the sum of the $Q$--moduli of all vertices of triangles touching $\gamma$ to the right. For our generators, we obtain the following linear functionals:
\begin{align*}
\nu (\m_0) &= q_1-q''_1+q'_2-q''_2-q_3+q'_3
&&\nu (\l_0^t) = -2q'_1+2q''_1 -2q'_2+2q''_2\\
\nu (\m_1) &= q_0-q'_0-q'_1+q''_1-q_3+q''_3
&&\nu (\l_1^t) = -2q'_0+2q''_0 -2q'_2+2q''_2
\end{align*}
The signs in the table respect the transverse orientations, which are relevant when the boundary curves of surfaces corresponding to linear combinations of the $N(F_i)$ are computed. 


\subsection{Some spun-normal surfaces}

Using the explicit description of normal surfaces, one can work out the topological type
and the position of normal surfaces in the manifold. The pictures of the
gluing pattern of some surfaces are shown in Figure~\ref{fig:whl_surfaces},
where the quadrilaterals and finitely many triangles are used to obtain
compact surfaces, along whose boundary components infinite normal annuli
have to be attached. The boundary components are drawn in the colour of the
corresponding cusp. Triangle coordinates are labelled by numbering the
vertices of the four tetrahedra from $0$ to $15$.
\begin{figure}[h!]
  \begin{center}
    \subfigure[]{
      \includegraphics[width=13cm]{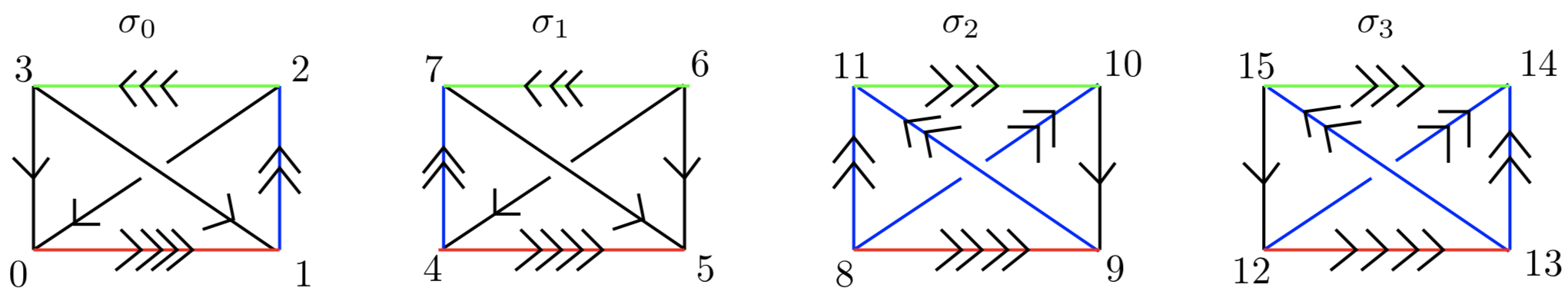}}
      \\
    \subfigure[The twice--punctured Klein bottle $N(F_{10})+N(F_{11})$]{
      \includegraphics[width=11cm]{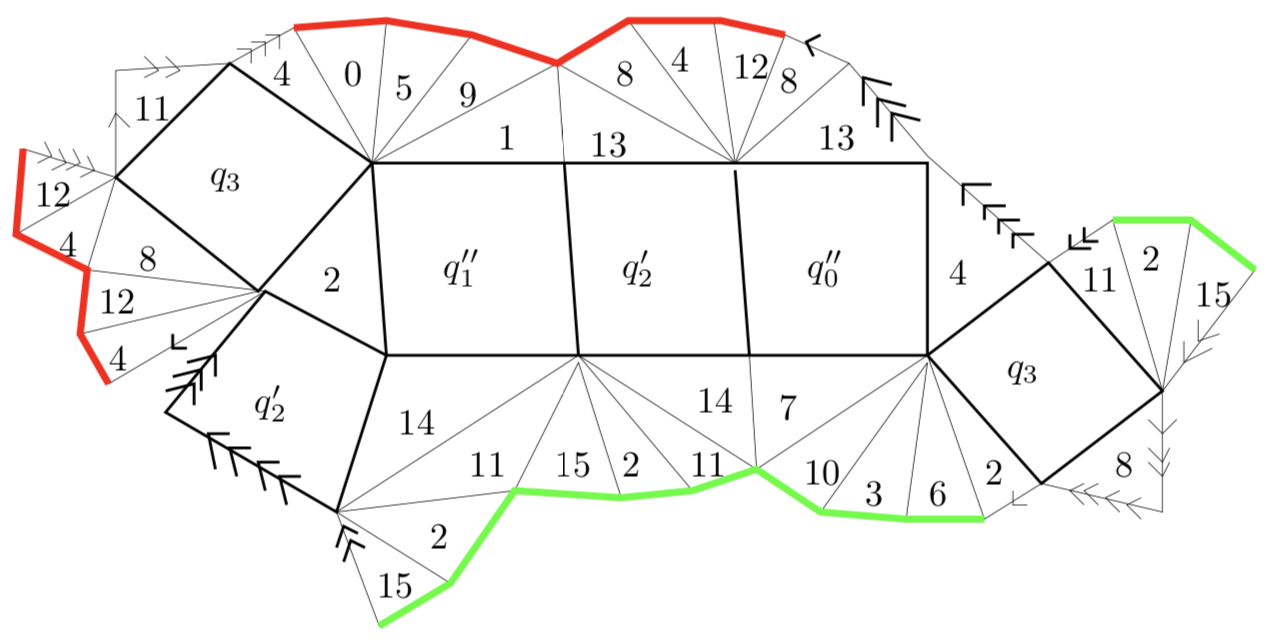}}
\\
\subfigure[The once--punctured Klein bottle $\frac{1}{2}(N(F_{13})+N(F_{18}))$]{
      \includegraphics[width=6cm]{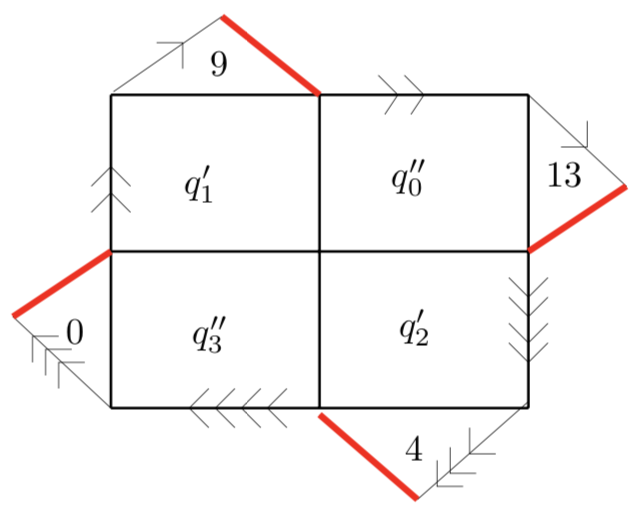}}
      \qquad
\subfigure[The twice--punctured torus $N(F_{13})+N(F_{18})$]{
      \includegraphics[width=7cm]{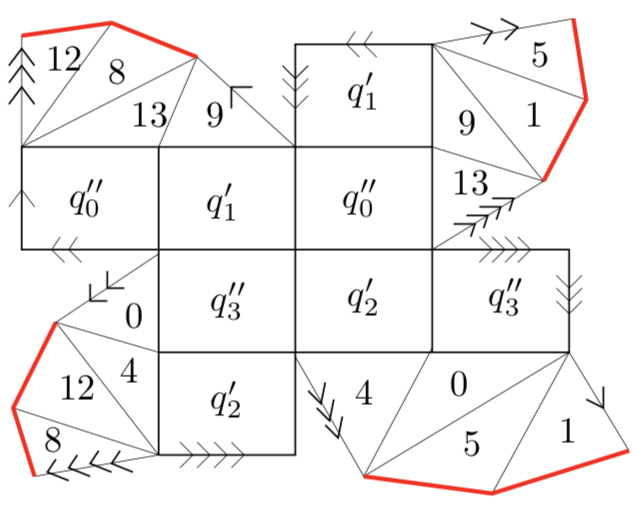}}
\end{center}
  \caption{Normal surfaces in the Whitehead link complement}
  \label{fig:whl_surfaces}
\end{figure}


\subsection{Equivalence classes}
\label{whl:Equivalence classes}

A homeomorphism $N\co \D_{\text{pre-}\infty}(\whl)\to \N(\tri_\whl)$ is given in \cite{defo} between $\N(\tri_\whl)$ and the tropical pre-variety $\D_{\text{pre-}\infty}(\whl)$ obtained from the canonical defining equations. There is an induced action of the group $D_4$ of symmetries of $\D(\whl)$ on $\D_{\text{pre-}\infty}(\whl).$ Since the elements of $D_4$ interchange coordinates, the induced action corresponds to interchanging coordinate triples of elements in $\D_\infty(\whl).$ Moreover, there is an induced action of $D_4$ on $\N(\tri_\whl)$ via the homeomorphism $N$ which again corresponds to interchanging coordinate triples of elements.

Indeed, $D_4$ can be identified with a group of symmetries of the triangulation. There is a Klein four group $K_f,$ identified with $\langle\tau_1, \tau_2\rangle,$
which stabilises the cusps, and orbits of the action of $K_f$ on $\N(\tri_\whl)$ give six equivalence classes amongst the vertex solutions in $\N(\tri_\whl)$:
\begin{align*}
N_1 &= \{ V_2,V_4 \},
&& N'_1 = \{ V_1,V_3 \},\\
N_2 &= \{ V_7,V_8,V_{11},V_{12}\}, 
&& N'_2 = \{ V_5,V_6,V_{9},V_{10}\},\\
N_3 &= \{ V_{15},V_{16},V_{19},V_{20}\},
&& N'_3 = \{ V_{13},V_{14},V_{17},V_{18} \}.
\end{align*}

\begin{figure}[t]
  \begin{center}
    \subfigure[Classes]{
      \includegraphics[width=5cm]{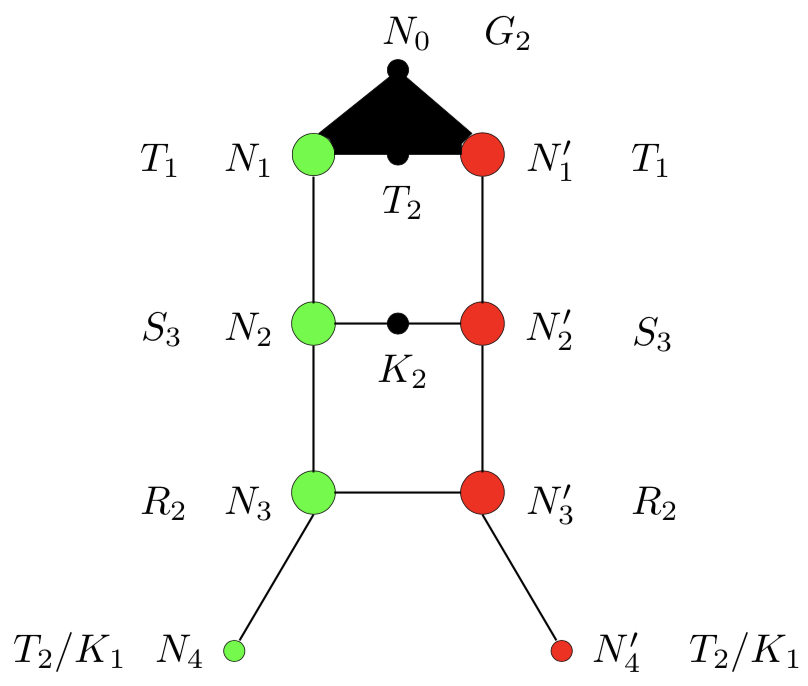}}
    \qquad
    \subfigure[$\partial$--curves]{
      \includegraphics[width=5cm]{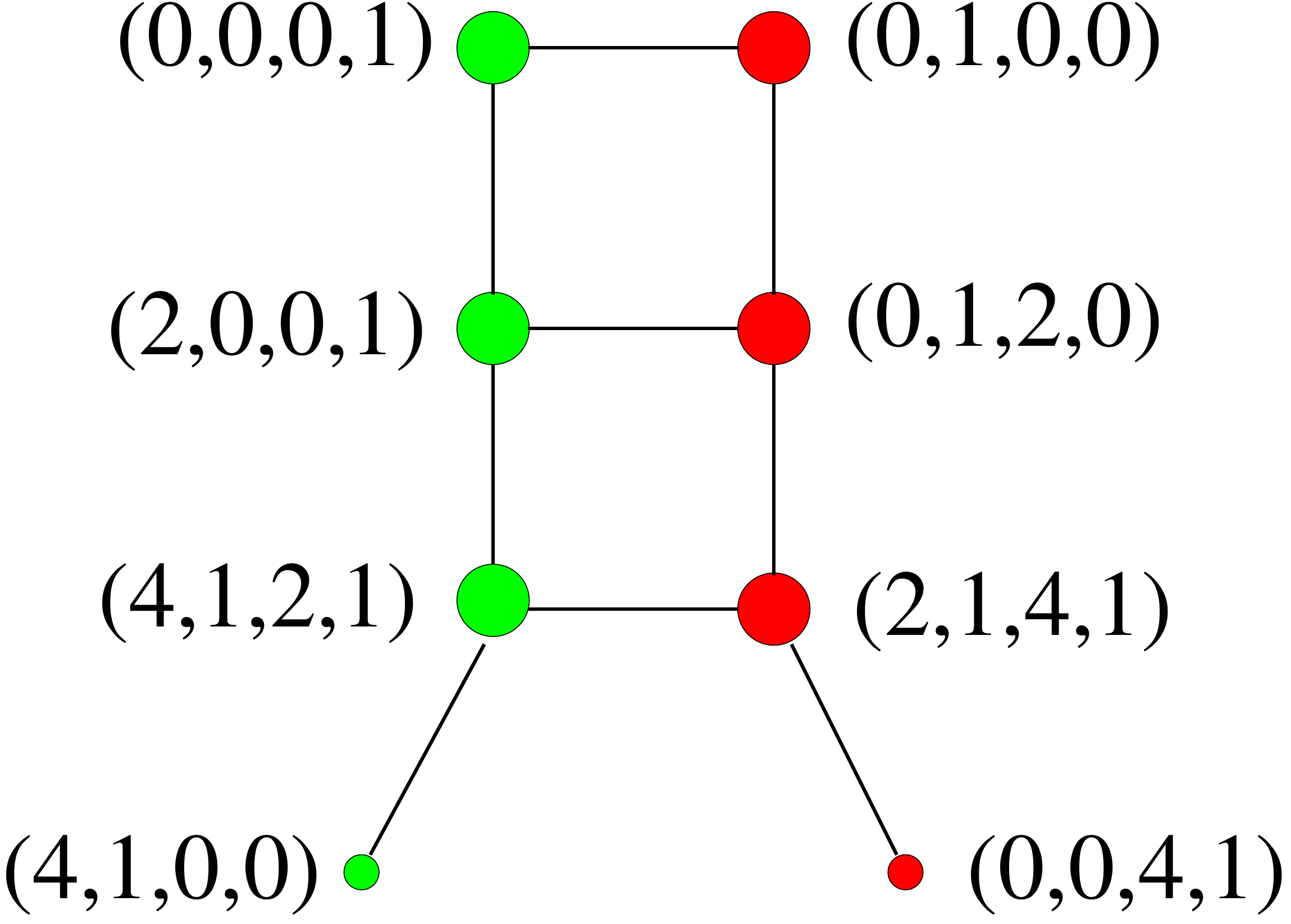}}
  \end{center}
  \caption{Surfaces in the Whitehead link complement}
  \label{fig:whl_results}
\end{figure}
The classification up to isotopy is given in \S\ref{sec:Incompressible normal surfaces} and \S\ref{incompressible after FH}.
The orbit under $K_f$ corresponds to the different ways a surface can ``spin into the cusps''. In particular, the (two or four) normal coordinates in each of the classes $N_k$, $N'_k$ correspond to isotopic surfaces. 

Members of the classes $N_k$ and $N'_k$ are interchanged by symmetries
interchanging the cusps. These symmetries correspond to the remaining
elements of $D_4.$ One can visualise the action of $D_4$ on
$\N(\tri_\whl)$ by considering the action of the dihedral group on Figure
\ref{fig:whl_admissible} induced by its standard action on a square. The
quotient by the action of $K_f$ is
pictured in Figure~\ref{fig:whl_results}(a), where the topological types of minimal representatives for some points are indicated. 

Note that there are arcs in $\N(\tri_\whl)$ connecting elements of $N_3,$ e.g.\thinspace the vertices $V_{16}$ and $V_{19}.$ The geometric sum $F_{16}+F_{19}$ is a twice--punctured torus. However, $\frac{1}{2}(N(F_{16})+N(F_{19}))$ is also an admissible integer solution, and the corresponding normal surface is a once-punctured Klein bottle. The corresponding equivalence class (i.e.\thinspace $K_f$ orbit) is denoted by $N_4.$ Similarly, for arcs in $\N(\tri_\whl)$ joining elements of $N'_3$ we obtain an equivalence class $N'_4$ whose elements are midpoints of these arcs.

The surface determined by a minimal integer solution corresponding to the point $V_0:= \frac{1}{2}(V_1+V_3) = \frac{1}{2}(V_2+V_4)$ in the ``centre" of $\N(\tri_\whl)$ is a genus two surface. This point is fixed by all symmetries, and we denote its equivalence class by $N_0.$ The quadrilateral spanned by $V_1, V_2, V_3, V_4$ is called the \emph{centre square}.

The $D_4$ orbit of a point $P \in \N(\tri_\whl)$ is now analysed. If $P=V_0,$ then its equivalence class only contains one element. If $P$ is contained in the square $[V_1,V_2,V_3,V_4]$ but not equal to its centre, then its $D_4$ orbit contains exactly four elements. Note that different elements in the square can have the same \emph{$\partial$--coordinate}
\begin{equation*}
(\nu_P(\l_0),-\nu_P(\m_0),\nu_P(\l_1),-\nu_P(\m_1)).
\end{equation*}
This is true for instance for $V_1$ and $\frac{3}{4}V_1 + \frac{1}{4}V_3.$ 

However, elements of the same $D_4$ orbit are distinguished by their $\partial$--coordinates. If $P$ is contained in $N_4$ or $N'_4,$ or is the midpoint of an arc $[V,W]$ in $\N(\tri_\whl)$ with $V \in N_i,$ $W \in N'_i$ and $i=2$ or $3,$ then its $D_4$ orbit contains exactly four elements. Moreover, the elements of the orbit are distinguished by their $\partial$--coordinates. If $P \in \N(\tri_\whl)$ is not contained in any of the sets considered above, then its $D_4$ orbit contains exactly eight elements, and all these elements are distinguished by their $\partial$--coordinates.

Moreover, the elements of an equivalence class of a point in $\N(\tri_\whl)$ have the same projectivised (i.e.\thinspace unoriented) $\partial$--coordinate. Thus, the set of projectivised $\partial$--coordinates arising from $\N(\tri_\whl)$ can be computed using the incidence structure amongst the equivalence classes, and the result is shown in Figure~\ref{fig:whl_results}(b). This shows that each equivalence class is uniquely determined by its projectivised $\partial$--coordinate unless its elements are contained in the centre square of $\N(\tri_\whl).$ 

\begin{lem} \label{whl:lem:slopes det surf}
An embedded spun-normal surface in $\whl$  is uniquely determined by its \emph{transversely oriented} boundary curves if its projectivised normal $Q$--coordinate is not contained in the centre square in $\N(\tri_\whl)$.
\end{lem}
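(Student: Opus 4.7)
The plan is to deduce the lemma directly from the preceding analysis of the $D_4$-action on $\N(\tri_\whl)$. The transversely oriented boundary curves of an embedded spun-normal surface $S$ are encoded by the signed integer $\partial$-coordinate $\partial S = (\nu_S(\l_0^t), -\nu_S(\m_0), \nu_S(\l_1^t), -\nu_S(\m_1))$, which is a linear functional of the integer $Q$-coordinate $N(S)$. Because $N(S)$ determines $S$ uniquely up to normal isotopy and vertex linking tori by Theorem~\ref{thm:admissible integer solution gives normal}, it suffices to show that $\partial S$ determines $N(S)$ whenever $[N(S)] \in \N(\tri_\whl)$ is not contained in the centre square.

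The first step is to recover the projective class $[N(S)]$ from $\partial S$. Passing to the projectivised (unoriented) $\partial$-coordinate, the paragraph immediately preceding the lemma shows that outside the centre square this projective $\partial$-coordinate uniquely determines the equivalence class (i.e.\thinspace the $K_f$-orbit) of $[N(S)]$. Within such an equivalence class the $K_f$-action changes $\partial$-coordinates only by independent sign changes on the two cusp blocks (compare Table~\ref{tab:action on hol}), so the signed $\partial$-coordinate distinguishes all elements of the class. Thus, for $[N(S)]$ outside the centre square, the signed $\partial$-coordinate determines $[N(S)]$ uniquely.

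The second step is to recover the scaling factor. If $V$ denotes the minimal integer representative of the ray through $[N(S)]$, then $N(S) = r V$ for some positive rational $r$, and $\partial S = r\, \partial V$. As long as $\partial V \neq 0$, the scalar $r$, and hence $N(S)$ and $S$, are determined. The nonvanishing of $\partial V$ for every equivalence class outside the centre square is read off from Figure~\ref{fig:whl_results}(b) and Table~\ref{table:white_normal}: each of $N_2, N'_2, N_3, N'_3, N_4, N'_4$ has nonzero projectivised $\partial$-coordinate, while the only equivalence class with vanishing $\partial$-coordinate, namely the closed class $N_0$ corresponding to the genus-two vertex solution $V_0$, lies in the centre square.

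The main (modest) obstacle is to rule out the possibility that $S$ might share its transversely oriented boundary curves with some $S'$ whose projective class lies in the centre square. For this I would observe that all $\partial$-coordinates of points in the centre square lie in the plane $\{(0,a,0,b) : a,b \in \RR\}$, since they are non-negative linear combinations of $\partial V_1,\ldots,\partial V_4$ (see Table~\ref{table:white_normal}). On the other hand, the projectivised $\partial$-coordinates of the classes $N_2, N'_2, N_3, N'_3, N_4, N'_4$, to which every point of $\N(\tri_\whl)$ outside the centre square belongs, all have nonzero first or third entry, corresponding to slopes $2/1$, $1/2$, or $1/1$ as recorded in Figure~\ref{fig:whl_results}(b). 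These two sets of $\partial$-coordinates are therefore disjoint, and the cross-case collapses, completing the argument.
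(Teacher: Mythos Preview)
Your approach is essentially the paper's: write $N(S)=\alpha P$, argue from the preceding $D_4$-analysis and Figure~\ref{fig:whl_results}(b) that the signed $\partial$-coordinate determines $P$ uniquely outside the centre square, and then recover $\alpha$ from the nonvanishing of $\partial P$. The paper's own proof is in fact shorter than yours; it simply asserts, on the basis of the preceding paragraphs, that no two points of $\N(\tri_\whl)$ outside the centre square share a $\partial$-coordinate, and then recovers $\alpha$ by linearity.

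There is one genuine slip in your step~3. You write that every point of $\N(\tri_\whl)$ outside the centre square \emph{belongs} to one of the classes $N_2, N_2', N_3, N_3', N_4, N_4'$. That is false: those symbols denote specific $K_f$-orbits of vertex solutions (and certain midpoints), not the full complement of the centre square. A generic point on, say, the segment $[V_2,V_7]$ lies outside the centre square but in none of those classes. What you actually need is the statement that every point outside the centre square has nonzero first or third entry in its $\partial$-coordinate. That follows, but not from the sentence you wrote: one must use linearity of $\nu$ along each of the 28 segments together with the vertex data in Table~\ref{table:white_normal}, or simply appeal to the computed complex in Figure~\ref{fig:whl_results}(b), which encodes the projectivised $\partial$-coordinates of \emph{all} points and shows that the image of the centre-square complement is disjoint from the line $\{[0:a:0:b]\}$. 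Once this is corrected your argument is complete and matches the paper's.
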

\begin{proof}
Let $S$ be an embedded spun-normal surface. Then there is a unique $\alpha>0$ and a unique point $P \in \N(\tri_\whl)$ such that $N(S)=\alpha P.$ If $P$ is not contained in the centre square, then there is no other point in $\N(\tri_\whl)$ with the same $\partial$--coordinate. Thus, $S$ is uniquely determined by
\begin{align*}
&\alpha (\nu_P(\l_0),-\nu_P(\m_0),\nu_P(\l_1),-\nu_P(\m_1))\\
=& (\nu_{N(S)}(\l_0),-\nu_{N(S)}(\m_0),\nu_{N(S)}(\l_1),-\nu_{N(S)}(\m_1)).
\end{align*}
This proves the lemma.
\end{proof}


\begin{figure}[p]
  \begin{center}   
      \includegraphics[width=14cm]{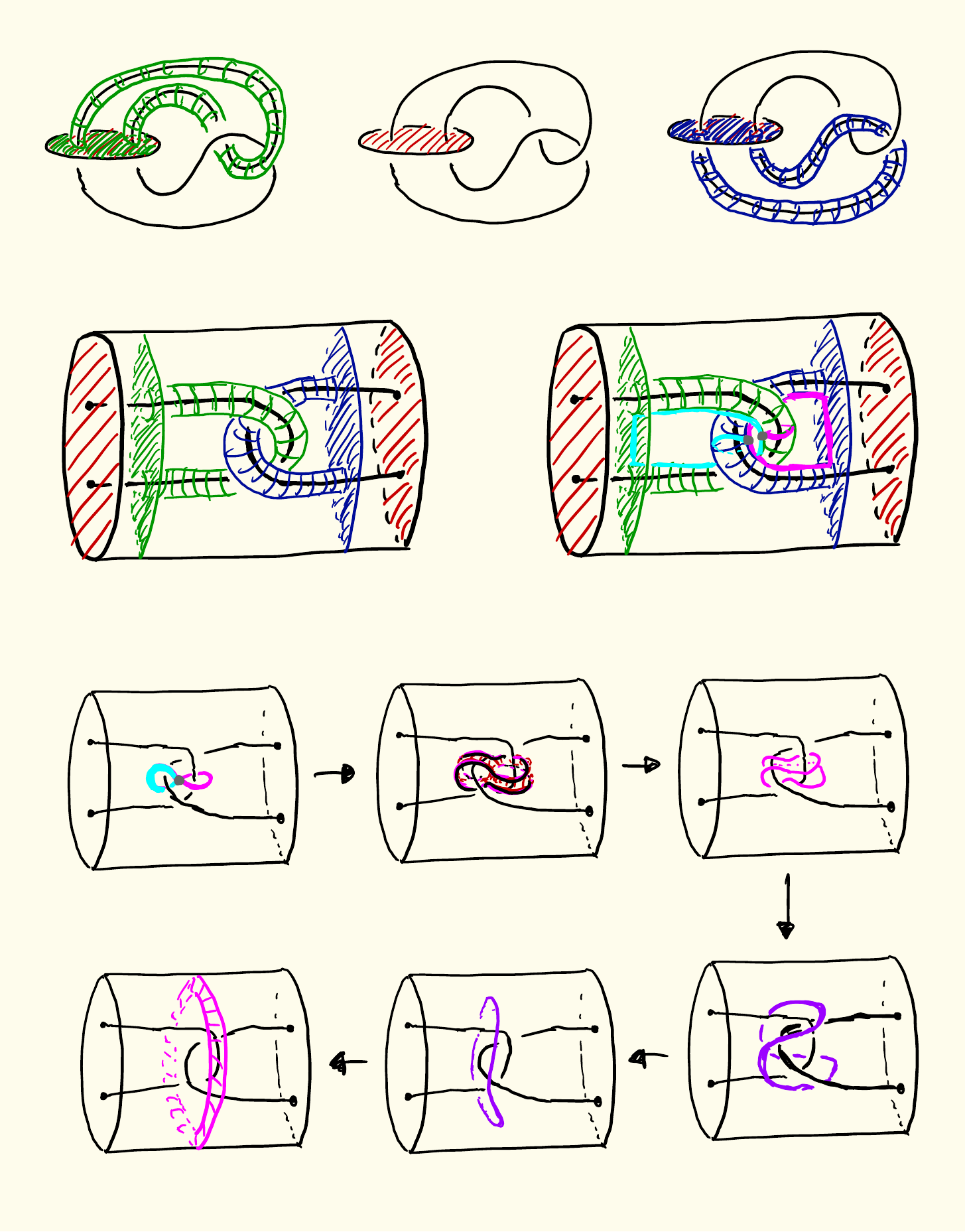}
  \end{center}
  \caption{Isotopic once-punctured tori. The reader will find it a pleasant exercise in normal surface theory to show that the once-punctured tori normalise to the same normal surface in the complement of a \emph{normal} thrice-punctured disc in $\tri_\whl.$}
  \label{fig:whl-isotopy}
\end{figure}


\subsection{Criteria for incompressibility}
\label{sec:Dunfield}

Dunfield and Garoufalidis~\cite{DuGa} give a simple criterion for a vertex surface to be incompressible, namely that it be a vertex surface with non-empty boundary which has a quadrilateral in each tetrahedron. However, this criterion does not apply to any of the vertex normal surfaces in the given ideal triangulation of the Whitehead link complement.

A certificate for incompressibility can be given using a different result, which the author learned from Nathan Dunfield. This is only summarised here, and complete details will be given in \cite{splittings}. A \emph{semi-angle structure} is an assignment $q \mapsto \alpha(q)$ of a non-negative real number (called angle) to each quadrilateral type $q$ in an ideal triangulation such that the sum of the angles associated to the three distinct quadrilateral types supported by each tetrahedron equals $\pi,$ and the sum of all angles of the quadrilateral types facing an edge in the 3--manifold equals $2\pi.$ See \cite{LuTi} for a detailed discussion of this viewpoint, We view $\alpha\in [0,\pi]^{\square} = [0,\pi]^{3n},$ where $\square$ is the set of all isotopy classes of quadrilateral discs.
We remark that if $\alpha\in (0,\pi)^{\square}$, then it is termed an \emph{angle structure}. This can be viewed as a linear hyperbolic structure and we refer the reader to the excellent exposition in \cite{FuGe} for a history and applications of angle structures. 

An embedded normal surface $S$ has normal $Q$--coordinate $N(S) \in \NN^\square = \NN^{3n}.$ Then $S$ is said to be \emph{dual to the semi-angle structure $\alpha$}, if $\alpha \cdot N(S) = 0,$ where the standard Euclidean inner product is taken. In other words, for each quadrilateral type $q$ that has non-zero weight in $S,$ we have $\alpha(q)=0$. 

\begin{thm}[Dunfield] 
Let $M$ be the interior of a compact 3--manifold with boundary a non--empty disjoint union of tori. Let $S$ be an embedded normal surface (possibly non--compact) in $(M; \tri)$ without any boundary parallel components. If $S$ is dual to a semi-angle structure of $(M; \tri)$, then it is essential.
\end{thm}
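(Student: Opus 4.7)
The plan is to adapt the combinatorial Gauss--Bonnet method for (semi-)angle structures to the spun-normal setting; this is the approach used by Casson, Lackenby, Kang--Rubinstein, and Segerman--Tillmann, and is the natural one given the linear-programming duality between normal surfaces and angle structures underlying the statement. The core idea is that $\alpha$ assigns a combinatorial area to each normal disc, the duality hypothesis forces $\mathrm{area}(S) = 0$, and a global Gauss--Bonnet identity transfers this to topological constraints that rule out each way $S$ can fail to be essential.

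First I would assign areas to normal discs: set $\mathrm{area}(T) = 0$ for each normal triangle $T$, since its three interior angles are the three dihedral angles at one vertex of a tetrahedron and these sum to $\pi$ by the first semi-angle axiom; and set $\mathrm{area}(Q) = -2\alpha(q)$ for a normal quadrilateral of type $q$, since its four interior angles form two pairs of opposite dihedral angles summing to $2(\alpha(q') + \alpha(q'')) - 2\pi = -2\alpha(q)$. Summing over the discs of $S$ yields $\mathrm{area}(S) = -2\,\alpha \cdot N(S) = 0$ by the duality assumption. Next, using the second semi-angle axiom (angle sum $2\pi$ around each interior edge) together with the flatness of the induced boundary triangulations on the peripheral tori, I would establish the combinatorial Gauss--Bonnet identity $\mathrm{area}(S_0) = -2\pi\chi(S_0)$ for every compact component of a suitable truncation of $S$ at the cusps. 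Combined with the area computation, this forces $\chi(S_0) \le 0$ for every such component; in particular $S$ contains no $2$-sphere and no disc.

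It then remains to exclude compressing discs, since boundary-parallel components are ruled out by hypothesis and $\pi_1$-injectivity follows from the absence of compressing discs by the Loop Theorem. Assume for contradiction that some component of $S$ admits a compressing disc $D$, and normalise $D$ relative to $S$ and $\tri^{(2)}$ using the spun-normal normalisation procedure of Kang--Rubinstein \cite{KR-2015}. Compressing $S$ along $D$ produces a normal surface $S'$ with $\chi(S') = \chi(S) + 2 > 0$, and the normalisation is to be arranged so that no new quadrilateral type appears, i.e., every quadrilateral of $S'$ is of a type $q$ already supported by $S$, hence with $\alpha(q) = 0$. But then $S'$ is also dual to $\alpha$, so the Gauss--Bonnet step forces $\chi(S') \le 0$, a contradiction.

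The main obstacle is the final arrangement: one must show that the normalised compression produces a surface whose quadrilateral support remains inside the zero set of $\alpha$. This is delicate because a compressing disc may meet tetrahedra in which $\alpha$ is strictly positive on every quadrilateral type, so new positive-area quadrilaterals could a priori appear when $D$ is normalised. Resolving this requires the careful spun-normal normalisation and exchange arguments that are announced in Dunfield's forthcoming \cite{splittings}, refining the taut-foliation techniques of Lackenby to accommodate semi-angle structures rather than strict angle structures.
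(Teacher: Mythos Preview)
Your Gauss--Bonnet framework matches the paper's sketch, but the mechanism you propose for ruling out compressing discs is different and runs into precisely the obstacle you flag at the end.

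The paper does not compress $S$ and renormalise. Given a putative compressing disc $D$, it puts $D$ itself into normal form relative to both $\tri$ and the already-normal surface $S$. Because every quadrilateral of $S$ sits across an edge-pair with $\alpha$-angle zero, the dihedral angles of $\alpha$ descend coherently to the pieces obtained by slicing each tetrahedron along the discs of $S$; the normalised $D$ then inherits an induced semi-angle structure from this cut-open cell decomposition. Combinatorial Gauss--Bonnet applied directly to $D$ gives $\chi(D)\le 0$, contradicting $\chi(D)=1$. Nothing is ever done to $S$, so the question of whether some new normal surface remains dual to $\alpha$ never arises.

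Your route---compress to $S'$, normalise, and hope its quadrilateral support stays inside $\alpha^{-1}(0)$---is not obviously repairable: normalisation after a compression can introduce quadrilaterals in tetrahedra where $\alpha$ is strictly positive on every quadrilateral type, and the exchange lemmas you invoke control complexity or weight, not quadrilateral types. There is also a slip in the statement of the contradiction: a single compression gives $\chi(S')=\chi(S)+2$, which need not be positive.

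Finally, you do not address boundary incompressibility. The paper dispatches this in one line via Hatcher's observation that, for an embedded incompressible surface with boundary on torus boundary components of an irreducible $3$--manifold, boundary incompressibility is automatic.
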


Here is a sketch of the proof. Suppose $S$ had a compression disc $D.$ Then $D$ can be put into a normal form relative to the triangulation and the normal surface, and inherits an induced semi-angle structure. The combinatorial Gau\ss-Bonnet formula then implies that $D$ has non-positive Euler characteristic, a contradiction. Hence the surface is incompressible. The proof is concluded with an observation by Hatcher that incompressible implies boundary incompressible for an embedded surface with boundary only on the torus boundary components of a 3-manifold. A proof of Dunfield's theorem is given in \cite{splittings}. 

The above theorem has the following simple corollary:

\begin{cor}\label{cor:dunfield criterion}
Let $M$ be the interior of a compact 3--manifold with boundary a non--empty disjoint union of tori, $\tri$ be an ideal triangulation of $M$, and $\alpha\in [0,\pi]^{\square}$ be a semi-angle structure. If $S$ and $F$ are compatible normal surfaces (possibly non--compact) that are both dual to $\alpha$, then each 2--sided normal surface that is a Haken sum of $S$ and $F$ is essential.
\end{cor}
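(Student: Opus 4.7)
The plan is to use the fact that duality to a semi-angle structure is a \emph{linear homogeneous} condition on normal $Q$--coordinates, hence preserved by Haken sum. Concretely, if $G$ is a Haken sum of the compatible normal surfaces $S$ and $F$, then at the level of quadrilateral vectors $N(G) = N(S) + N(F) \in \NN^{\square}$: the regular switching at intersection curves producing the Haken sum preserves the total count of each quadrilateral type in each tetrahedron, and only rearranges triangles and vertex-linking pieces. Compatibility of $S$ and $F$ ensures that $N(S)+N(F)$ is admissible; it automatically satisfies the $Q$--matching equations by linearity, so Theorem~\ref{thm:admissible integer solution gives normal} delivers the normal surface $G$, unique up to normal isotopy and vertex-linking components.

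With this in hand, the key computation is a single line: since the pairing $\alpha\cdot(\,\cdot\,)$ on $\RR^{\square}$ is linear,
\[
\alpha\cdot N(G) \;=\; \alpha\cdot N(S) + \alpha\cdot N(F) \;=\; 0+0 \;=\; 0,
\]
so $G$ is itself dual to $\alpha$. Dunfield's theorem then immediately yields the essentiality of the 2--sided normal surface $G$, \emph{provided} $G$ has no boundary parallel components.

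The main technical obstacle I foresee is exactly the boundary parallel clause in Dunfield's hypothesis, which is not \emph{a priori} satisfied by arbitrary Haken sums. The remedy is to peel such components off while preserving duality. From $\alpha\cdot N(G)=0$ together with $\alpha(q)\ge 0$ and $N(G)(q)\ge 0$ for every quadrilateral type $q$, one infers that $\alpha(q)=0$ whenever $N(G)(q)>0$. In particular, if $G_{0}\subseteq G$ is any union of components (boundary parallel or otherwise), the inequality $N(G_{0})\le N(G)$ holds coordinatewise and forces $\alpha\cdot N(G_{0})=0$, so $N(G)-N(G_{0})$ is still non-negative, admissible, $Q$--matching, and dual to $\alpha$, corresponding to the 2--sided normal surface $G\setminus G_{0}$. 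Iterating this observation strips off all boundary parallel components to produce a subsurface $G'$ satisfying the hypotheses of Dunfield's theorem. Under the natural convention of Theorem~\ref{thm:admissible integer solution gives normal} (which reads normal surfaces modulo such trivial pieces), this yields the essentiality of the original Haken sum $G$, completing the proof.
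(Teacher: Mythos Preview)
Your core argument is exactly the intended one: the paper states this as a ``simple corollary'' of Dunfield's theorem and gives no proof, the point being precisely the linearity you identify, $\alpha\cdot N(G)=\alpha\cdot N(S)+\alpha\cdot N(F)=0$, so that any Haken sum of $S$ and $F$ is again dual to $\alpha$ and Dunfield's theorem applies.

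One small logical slip in your final paragraph: stripping boundary parallel components from $G$ to obtain $G'$ and showing $G'$ is essential does \emph{not} show that $G$ itself is essential, since the definition of essential explicitly forbids boundary parallel components. Moreover, the convention of Theorem~\ref{thm:admissible integer solution gives normal} mods out only by vertex-linking surfaces (those with zero $Q$-coordinate), not by arbitrary boundary parallel pieces, so your appeal to it does not quite close the gap you have identified. The paper does not address this point either; in the applications in \S\ref{sec:Incompressible normal surfaces} the relevant Haken sums are explicit and visibly free of boundary parallel components, so the issue does not arise there.
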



\subsection{Essential normal surfaces}
\label{sec:Incompressible normal surfaces}

Listed in Table \ref{tab:whl:quad types} are a number of semi-angle structures on $\tri_\whl,$ as well as the vertex surfaces that are dual to these. In particular, each of the 20 vertex surfaces is essential. It follows from the information in the table and Corollary~\ref{cor:dunfield criterion} that each 2--sided normal surface in $\whl$ is essential except possibly those whose projectivised normal coordinates lie on the sides or the interior of the central square spanned by $V_1, V_2, V_3, V_4.$

Each surface $F_1, F_2, F_2, F_4$ is a once-punctured torus and can be given a natural orientation arising from the transverse orientation of its unique boundary curve. In this way, it represents an element $[F_i] \in H_2(\whl^c, \partial \whl^c; \Z),$ where $\whl^c$ is a compact core of $\whl.$ Note that $[F_3] = - [F_1]$ and $[F_4] = - [F_2].$
As in \cite{thu-norm}, one can now argue that the surfaces $S_i$ are norm minimising, and that each normal surface with projectivised normal coordinate along the boundary of the central square is norm minimising, and hence essential. In particular, we have the following:

\begin{obs}
The boundary of the central square naturally corresponds to the boundary of the unit ball of the Thurston norm. 
 \end{obs}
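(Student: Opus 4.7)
The observation follows from the norm-minimality assertions in the paragraph preceding it, which I would establish as follows. Since $\whl$ has two torus boundary components, $H_2(\whl^c, \partial \whl^c; \RR) \cong \RR^2$, and the relations $[F_3] = -[F_1]$, $[F_4] = -[F_2]$ stated in the excerpt imply $\{[F_1], [F_2]\}$ is a basis.

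I would first verify $\|[F_i]\|_T = 1$ for each $i \in \{1,2,3,4\}$. Each $F_i$ is a once-punctured torus, so $\chi_-(F_i) = 1$; each is also dual to at least one of the semi-angle structures listed in Table~\ref{table:white_normal}, so by Dunfield's theorem $F_i$ is essential, giving $\|[F_i]\|_T \le 1$. Hyperbolicity of $\whl$ (the absence of essential spheres, discs, annuli, and tori) then forces $\|[F_i]\|_T \ge 1$, so equality holds.

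For each pair of adjacent vertices $V_i, V_j$ of the central square, inspection of Table~\ref{table:white_normal} shows that the coordinate sum $N(F_i) + N(F_j)$ is admissible, so it determines an embedded normal surface $F_i + F_j$ with $\chi(F_i + F_j) = -2$. Following Thurston~\cite{thu-norm}, this Haken sum is norm-minimising, yielding $\|[F_i] + [F_j]\|_T = 2 = \|[F_i]\|_T + \|[F_j]\|_T$; equality in the triangle inequality means $[F_i]$ and $[F_j]$ lie on a common face of the unit ball. Applied to all four edges, the unit ball is exactly the parallelogram $\mathrm{conv}(\pm[F_1], \pm[F_2])$, and the linear identification $V_i \mapsto [F_i]$ carries the boundary of the central square onto the boundary of this parallelogram.

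The main obstacle lies in justifying that $F_i + F_j$ is norm-minimising. The triangle inequality gives $\|[F_i] + [F_j]\|_T \le 2$ directly, but the matching lower bound $\ge 2$ requires excluding essential representatives with $\chi_- = 1$. This can be handled by boundary slope counting combined with hyperbolicity: any surface representing $a[F_1] + b[F_2]$ must have at least $|a|$ boundary curves parallel to $\m_1$ on cusp 1 and $|b|$ parallel to $\m_0$ on cusp 0, so the only candidate lower-complexity essential surfaces (once-punctured tori and thrice-punctured spheres) are ruled out by their boundary configurations when $a = b = 1$.
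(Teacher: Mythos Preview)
Your argument is essentially correct and fleshes out what the paper leaves to a one-line citation of \cite{thu-norm}. The paper does not give a proof at all; it simply asserts that ``as in \cite{thu-norm}'' the $F_i$ and the edge surfaces are norm minimising, and states the observation. So you are supplying content the paper omits rather than reproducing its argument.

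Two small points. First, a slip in your final paragraph: the boundary of $F_1$ on cusp~1 is the longitude $\l_1^t$, not the meridian $\m_1$ (from Table~\ref{table:white_normal}, $\nu(\l_1^t)=0$ and $\nu(\m_1)=1$), and likewise for $F_2$; the counting argument survives the correction. Second, the phrase ``must have at least $|a|$ boundary curves parallel to \dots'' is not literally true for arbitrary representatives, since boundary components can cancel in homology. What you actually need is: after compressing and discarding sphere/disc/torus/annulus components (all null-homologous rel boundary in a hyperbolic manifold), a $\chi_-=1$ representative is a connected essential once-punctured torus or thrice-punctured sphere; the first is excluded because $\partial([F_1]+[F_2])$ is nonzero on both cusps, and the second because two parallel boundary curves on one cusp cannot sum to a primitive class. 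With that tightening your lower bound is clean.

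A slicker route, closer to what the citation of \cite{thu-norm} presumably intends and which the paper in fact establishes two paragraphs later, is to observe that the surfaces along the open edges of the central square are fibres of fibrations of $\whl$ over $S^1$; fibres are automatically norm minimising by Thurston, which immediately forces each edge of the square to lie in a single face of the norm ball. Your boundary-slope argument is more elementary and avoids any appeal to the fibred structure, which is a genuine advantage if one wants the observation to stand independently of the later analysis.
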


It is also shown in \cite{thu-norm} that every essential surface homologous to a fibre is isotopic to a fibre. Whence every fibre arises as a normal surface with respect to $\tri_\whl.$ This verifies a result of Kang and Rubinstein~\cite{KR-2015}.

We now claim that every normal surfaces not in the boundary of the central square is not a fibre. This can be seen either by considering their boundary slopes, or by determining their image in homology. All classes are mapped to zero except for the surfaces along segments between thrice punctured spheres and once-punctured tori. These cannot be fibres since they only evaluate non-zero on one cusp. We have thus established:

\begin{obs}
Every fibre in $\whl$ normalises in precisely two ways. A  spun-normal surface in $\whl$ is a fibre if and only if its projectivised normal coordinate lies in the interior of an edge of the central square.
\end{obs}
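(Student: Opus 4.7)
The plan is to combine Thurston's norm and fibered-face theorem \cite{thu-norm}, the classical fibredness of $\whl$, and the $D_4$--symmetry of $\tri_\whl$, to characterise fibre classes on $\N(\tri_\whl)$ and count their spun-normal representatives.

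To prove the characterisation, I would first argue the forward direction: a fibre is an essential norm-minimising surface, so its projective normal coordinate lies on the image of the norm ball boundary, which by the preceding observation is the boundary of the central square. It cannot lie at a vertex $V_i$, because the once-punctured torus $F_i$ has boundary on only one cusp of $\whl$, whereas every fibre of $\whl\to S^1$ must meet each boundary torus of $\whl^c$ non-trivially. For the converse, the Whitehead link complement is a classical fibered link complement with fibre a twice-punctured torus, so at least one edge of the central square is fibered; by Thurston's fibered-face theorem, every rational class in the open cone over that edge is fibered. The $D_4$--symmetries of $\tri_\whl$ are self-homeomorphisms of $\whl$ that preserve the Thurston norm, act on the central square as the standard dihedral group, and are transitive on its four edges, so all four edges are fibered. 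Every rational projective point in an edge interior then corresponds to a fibre class, whose unique-up-to-isotopy fibre normalises, by Kang--Rubinstein~\cite{KR-2015}, to the spun-normal surface at that projective coordinate.

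To rule out the remaining regions, I would observe that an interior point $P=\sum c_i V_i$ of the central square (normalised to $\sum c_i=1$) represents a normal surface with $\chi=-1$, while its homology class has Thurston norm $|c_1-c_3|+|c_2-c_4|$, which is strictly less than $1$ in the interior of the square, so the surface is never norm-minimising and hence not a fibre. Diagonal points have trivial homology class (including the closed genus-two surface at $V_0$) and so cannot be fibres of $\whl\to S^1$ since $\whl$ has non-empty boundary. Points of $\N(\tri_\whl)$ outside the central square represent surfaces of the wrong topology (thrice-punctured spheres, twice-punctured $\RR P^2$s, and their Haken sums), which likewise fail norm-minimality for a fibre class.

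For the ``precisely two'' count: the involution $\tau_1\tau_2 \in K_f$ acts as the antipodal map on the central square, swapping $V_1\leftrightarrow V_3$ and $V_2\leftrightarrow V_4$, and so sends each interior edge point $P$ to a point $P'$ in the interior of the opposite edge. A direct $\partial$-coordinate computation shows that $P$ and $P'$ correspond to surfaces with opposite transverse orientations but the same unoriented boundary slopes, hence to the same unoriented fibre; since $K_f$-orbits are by definition built out of isotopic normal surfaces (this is how the equivalence classes $N_i, N'_i$ were introduced), this gives two normal representatives per fibre. By the characterisation, any normal representative of a fibre projects to an edge interior; on the central square, $P$ and $P'$ are the unique pair of projective points sharing a given unoriented fibre class, so the count is exactly two. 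The main obstacle will be to assemble the $\partial$-coordinate uniqueness argument cleanly, verifying that distinct $K_f$-orbits in the central square have distinct unoriented $\partial$-coordinates; this should follow from a case check using the vertex-solution data and the incidence structure of Figure~\ref{fig:whl_results}(b).
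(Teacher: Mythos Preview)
Your argument for the biconditional characterisation follows the paper's approach but is more explicit: the paper leaves implicit that all four edges of the Thurston norm ball are fibred, whereas you supply this via the transitivity of the $D_4$-action on the edges of the central square. The ``ruling out remaining regions'' paragraph is redundant, since the forward direction already excludes every point off the edge interiors; the claim there that an interior point normalised to $\sum c_i=1$ represents a surface with $\chi=-1$ is also not well-formulated.

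The ``precisely two'' argument has a genuine gap. You assert that $K_f$-orbits consist of isotopic normal surfaces ``by definition,'' but this is a misreading: the classes $N_i$, $N'_i$ were \emph{defined} as $K_f$-orbits and then \emph{shown} to consist of isotopic surfaces for the vertex solutions only (nontrivially --- see the explicit isotopy in Figure~\ref{fig:whl-isotopy}). The elements of $K_f$ act nontrivially on $H_1(\partial\whl^c)$ (Table~\ref{tab:action on hol}), so none is isotopic to the identity, and the claim in fact fails at edge-interior points. For $P=aV_1+bV_2$ with $a,b>0$, the $K_f$-orbit has one point on each of the four edges, with signed boundary coordinates $\pm(0,-b,0,-a)$ for $\{P,\tau_3(P)\}$ and $\pm(0,-b,0,a)$ for $\{\tau_1(P),\tau_2(P)\}$: two distinct projective homology classes, hence two distinct fibres. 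The correct count comes not from $K_f$ but from homology: since $\partial\colon H_2(\whl^c,\partial\whl^c)\to H_1(\partial\whl^c)$ is injective, a fibre is determined up to isotopy by its signed boundary coordinates up to a global sign, and the explicit boundary data show that exactly two edge-interior points (on opposite edges) carry any given such class.
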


It is also not difficult to describe the once--punctured tori corresponding to the vertices of the central square. The surfaces $F_1$ and $F_3$ have 
identical projectivised boundary slopes, and so do $F_2$ and $F_4$. Constructing the corresponding normal surfaces explicitly, one can see
that for each of the solutions, there are four normal triangles which form an annulus on the surface and cap the quadrilaterals off at one of the cusps. The union of the two respective annuli on $F_1$ and $F_3$ is a boundary parallel torus. Similar for $F_2$ and $F_4$. It follows that 
each pair is obtained from a thrice punctured sphere that meets one of the boundary components in two meridians and the other in a longitude by tubing the two meridional boundary components together. The two choices of annulus give the two surfaces in each pair. However, these are in fact isotopic surfaces. The isotopy is indicated in Figure~\ref{fig:whl-isotopy}.

The central surface, $G_2,$ is a closed genus two surface linking the red and green edge, and hence compressible. To one side, it compresses to a torus linking the red cusp, and to the other to a torus linking the green cusp. Its complements are therefore compression bodies. The surface $G_2$ is made of up four quadrilaterals and no triangles. Any surface $F$ whose projectivised normal coordinate lies in the interior of the central square is a Haken sum of the form $F = n_1 S_i + n_2 S_j + n_3 G_2,$ where $S_i$ and $S_j$ correspond to vertices of an edge of the central square. In particular, this has the same boundary slope as the surface $n_1 S_i + n_2 S_j$. From the local structure around the green and red edges, we see that each such surface is compressible, namely there are $n_3$ compression discs, and after the compressions we have a surface in same homology class as $n_1 S_i + n_2 S_j$ and with same Euler characteristic, and hence it is a fibre and in fact isotopic to this. At this point, we may record the following:

\begin{obs}
A spun-normal surface in $\whl$ is essential if and only if its projectivised normal coordinate does not lie in the interior of the central square. 
\end{obs}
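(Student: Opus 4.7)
The argument splits into three cases according to where the projectivised normal coordinate $P$ of a spun-normal surface $F$ lies in $\N(\tri_\whl)$: outside the closed central square, on its boundary, or in its open interior. The claim is that $F$ is essential in the first two cases and compressible in the third. The first two cases have essentially been handled by the preceding observations; the new content is the third.

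First, if $P$ lies outside the closed central square, then by inspection of the eight semi-angle structures $\alpha^\pm,\beta^\pm,\gamma^\pm,\delta^\pm$ listed in Table~\ref{table:white_normal} the vertex solutions supporting $P$ share a common dual semi-angle structure, so Corollary~\ref{cor:dunfield criterion} applies and $F$ is essential. Second, if $P$ lies on the boundary of the central square, then either $P \in \{V_1,V_2,V_3,V_4\}$ — each of which is dual to one of the semi-angle structures above and hence essential by Dunfield's criterion — or $P$ lies in the open interior of an edge, in which case the preceding fibre observation identifies the corresponding surface as a fibre of a fibration of $\whl$, which is automatically essential.

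The remaining case is the open interior of the central square. Any such $P$ can be written as $P = n_1 V_i + n_2 V_j + n_3 V_0$ with $V_i, V_j$ the endpoints of an edge of the square and $n_3 > 0$, so after rescaling to integer coordinates the associated surface decomposes as a Haken sum $F = n_1 S_i + n_2 S_j + n_3 G_2.$ Since $G_2$ is a closed genus-two surface linking both the red and green edges whose complementary pieces are compression bodies (compressing to a boundary-parallel torus on each side), $G_2$ itself is compressible on both sides. The plan is to show that $F$ inherits (at least) $n_3$ pairwise disjoint compression discs, and then to perform the $n_3$ compressions to obtain a surface homologous to $n_1 S_i + n_2 S_j$ with the same Euler characteristic; by the fibre observation this compressed surface must be isotopic to $n_1 S_i + n_2 S_j,$ so $F$ is not essential.

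The main obstacle is producing genuine compression discs for $F$ rather than only for the $G_2$ summands, since the compressing discs in a Haken sum can in principle be blocked by the other summands. The plan is to choose the compressing discs in a regular neighbourhood of the red (or green) edge, using the local structure of $G_2$ around this edge: $G_2$ is built out of four quadrilaterals with no triangles (the four quadrilateral coordinates of $V_0$), so the compressing disc can be taken close to a pre-image of the red edge, where the surfaces $S_i$ and $S_j$ contribute only normal triangles (vertex links) and normal triangles in this neighbourhood can be isotoped transverse to the disc. Verifying this transversality from the explicit combinatorics of $\tri_\whl$ is the technical core of the third case.
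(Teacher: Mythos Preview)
Your proposal is correct and follows essentially the same three-case structure as the paper. For points outside the closed square you both invoke Dunfield's criterion via the listed semi-angle structures; for the open interior you both exhibit $n_3$ compression discs coming from the local picture around the red/green edges in the Haken decomposition $F=n_1S_i+n_2S_j+n_3G_2$. The one genuine difference is the boundary of the square: the paper handles the vertices and open edges uniformly by arguing that these surfaces are Thurston-norm minimising and hence essential, whereas you split into vertices (Dunfield's criterion again) and open edges (fibres of a fibration). Both routes are valid; yours avoids the norm citation at the cost of a case split, while the paper's gives the unit ball of the Thurston norm as a byproduct.

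Two small points on your interior argument. First, it is not quite true that $S_i$ and $S_j$ contribute only normal triangles near the red and green edges: the once-punctured tori $F_1,\dots,F_4$ also carry quadrilaterals of types $q_0,\dots,q_3$, the same types as $G_2$. What makes the argument work is rather that \emph{all} quads in the sum are of type $q_k$ in tetrahedron $k$ and hence parallel, so an innermost sheet still bounds the desired compression disc. Second, once you have produced a single genuine compression disc you are done with the observation; the further claim that the compressed surface is isotopic to $n_1S_i+n_2S_j$ needs Thurston's result on norm-minimising representatives of fibre classes rather than the preceding ``fibre observation'' (which applies only to spun-normal surfaces), but this extra information is not required for the statement at hand.
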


This completes the classification of projectivised essential normal surfaces up to normal isotopy.


\begin{figure}[p]
  \begin{center}
      \subfigure[$\FH$: all essential surfaces]{\label{fig:whl_fh_normal}
      \includegraphics[height=4.8cm]{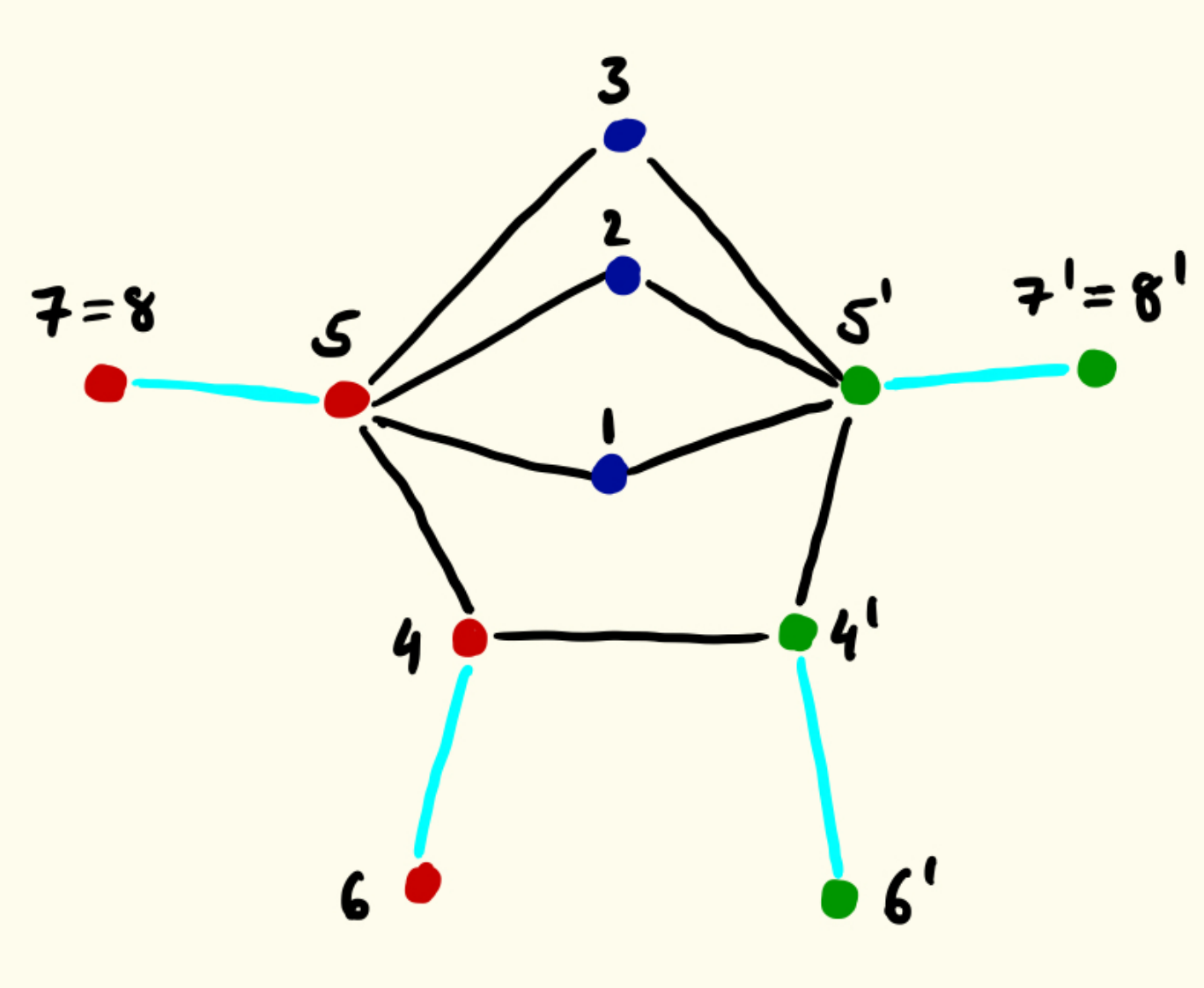}}
          \qquad
  \subfigure[Meridional incompressible surfaces]{\label{fig:whl_fh_mi}
      \includegraphics[height=4.8cm]{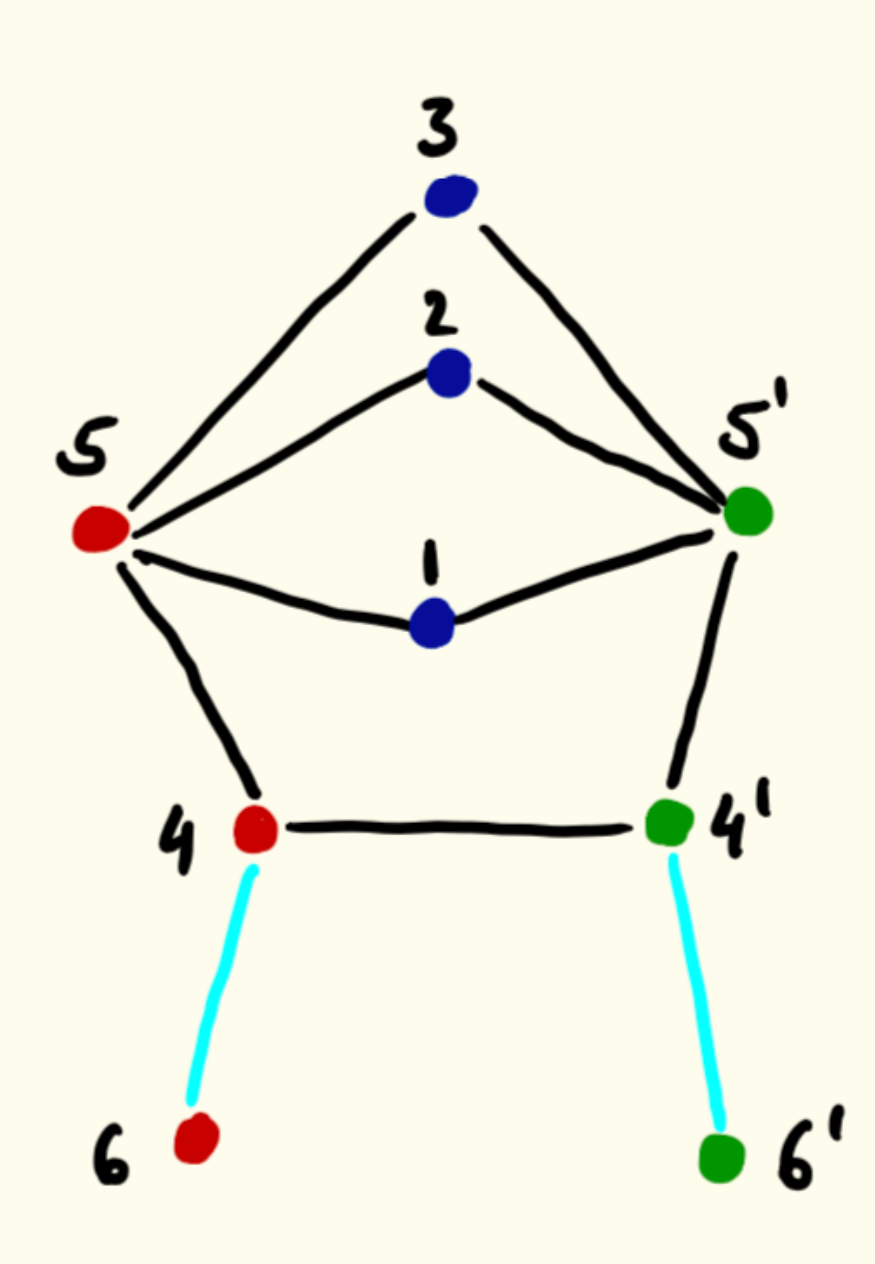}}
    \qquad
        \subfigure[Floyd-Hatcher]{\label{fig:whl_fh_ai}
      \includegraphics[height=4.8cm]{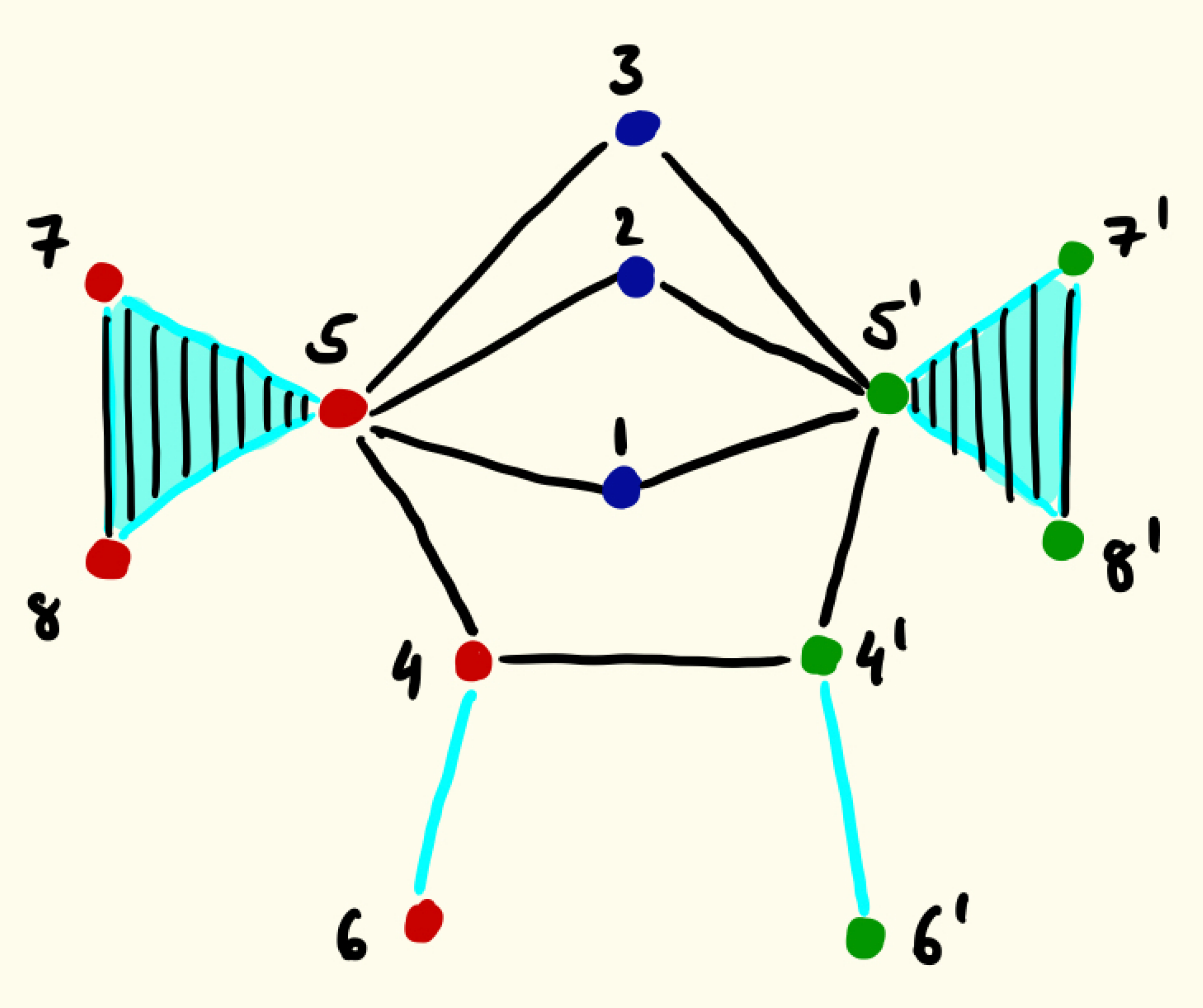}}
      \\
       \subfigure[Representative surfaces as shown in a pre-print of \cite{fh}. ]{\label{fig:whl_fh_surfs}
   \includegraphics[width=13cm]{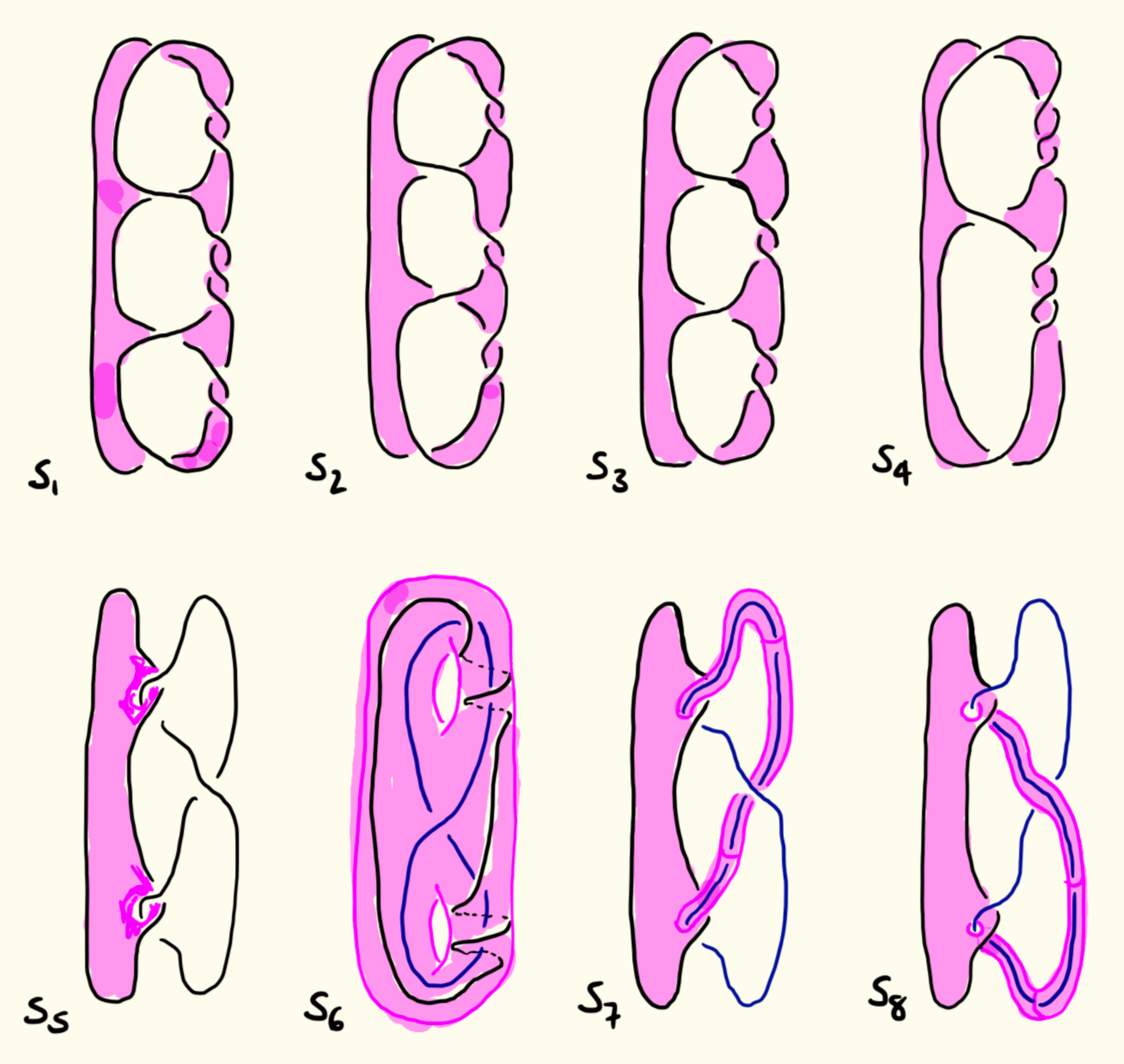}}      
      \end{center}
  \caption{Spaces of essential surfaces in the Whitehead link complement: These complexes have rational barycentric coordinates;
the bilateral symmetry corresponds to an involution exchanging the components of the Whitehead link; labels correspond to representative surfaces shown in Figure~\ref{fig:whl_fh_surfs}; and the vertical lines in the 2--simplices in \ref{fig:whl_fh_ai} indicate the same projective isotopy class; the blue 2-simplices and 1-simplices correspond to surfaces obtained from disjoint unions, whilst the black 1-simplices arise from normal sums or branched surfaces.}
  \label{fig:whl_fh}
\end{figure}


\subsection{Essential surfaces}
\label{incompressible after FH}

It follows from the main result of Walsh~\cite{wa} that every essential surface in $\whl$ that is not a fibre or a semi-fibre can be normalised with respect to $\tri_\whl.$ A semi-fibre is a separating surface that lifts to a fibre in a double cover. An inspection of the three double covers of $\whl$ shows that there is no such surface in $\whl.$ It is well known to experts that every non-compact essential surface that is not a fibre can be put into spun-normal form by choosing, for each cusp on which the surface has a cusp, a direction of spinning. A proof of this was recently announced by Kang and Rubinstein~\cite{KR-2015}. It follows that the space $\FH$ of projectivised classes of essential surfaces in $\whl$ is precisely the orbit space of the action of $K_f$ on the space of projectivised essential normal surfaces. This is shown in Figure~\ref{fig:whl_fh_normal}. Here, the labels of the vertices are with reference to Figure~\ref{fig:whl_fh_surfs}, which is taken from an earlier version of \cite{fh}, and the remaining surfaces can be determined from the symmetry interchanging the components.

Up to permuting the  link components, the surfaces in Figure~\ref{fig:whl_fh_surfs} have the following normalisations.
  The 
  pair of once-punctured tori $\{S_7, S_8\}$ normalises to the pair  $\{F_2, F_4\}$;   
  the twice-punctured tori $S_2$ and $S_3$ normalise to the pairs $\{ F_1+F_4, F_2+F_3\}$ and $\{F_1+F_2, F_3+F_4\};$
the thrice-punctured sphere $S_5$ normalises to the surfaces $\{F_7, F_8, F_{11}, F_{12}\}$;	
  the twice-punctured projective plane $S_4$ normalises to the surfaces $\{F_{15}, F_{16}, F_{19}, F_{20}\}$; 
  and the twice punctured torus $S_6$ normalises to the surfaces $\{F_{15}+F_{20}, F_{16}+F_{19}\}$.

The space of all incompressible surfaces in $\whl$ was described by Floyd and Hatcher~\cite{fh} using branched surfaces. They give an algorithm to compute a complex with rational barycentric coordinates that carries all essential surfaces that are \emph{meridional incompressible.} The condition for an essential surface $S \subset S^3 \setminus L$ to be meridional incompressible is as follows. If there is a disc $D\subset S^3$ with $D\cap S = \partial D$ and $D$ meeting $L$ transversely in one point in the interior of $D$, then there is a disc $D' \subset S\cup L$ with $\partial D' = \partial D,$ $D'$ also meeting $L$ transversely in one point. Geometrically, this corresponds to the existence of accidental parabolics. The complex carrying all meridional incompressible surfaces in the Whitehead link complement is shown in Figure~\ref{fig:whl_fh_mi} --- in \cite[Figure 5.4]{fh} this is the 1--complex with label [2,1,2] (the meaning of this label is irrelevant here).

The vertices of the 1--complex in Figure~\ref{fig:whl_fh_mi} are labelled $1, 2, 3, 4, 4', 5, 5', 6, 6'$, and we denote the 1--simplex with vertices $i$ and $j$ by $[i, j].$
It follows from \cite[Proposition 6.1]{fh} that the interior of each union $[5, 2] \cup [2, 5']$ and $[5, 3]\cup [3, 5']$ consists of fibres, and hence corresponds to a cone over a fibred face of the Thurston norm ball (after choosing coherent orientations of the surfaces representing the vertices).

An explanation of how to modify the algorithm of \cite{fh} to obtain the essential surfaces that do not satisfy the meridional incompressibility condition is given in  \S8 of \cite{fh}. The resulting complex carrying all essential surfaces in the Whitehead link complement according to \cite{fh} is the 2--complex shown in Figure~\ref{fig:whl_fh_ai}. This does not agree with our argument in \S\ref{sec:Incompressible normal surfaces} that the space $\FH$ of essential surfaces is as shown in Figure~\ref{fig:whl_fh_normal}. A possible explanation is that the isotopy between the once-punctured tori $S_7$ and $S_8$ was missed in \cite{fh}, because they are represented as non-isotopic surfaces in Figure~\ref{fig:whl_fh_ai}.


\subsection{Boundary curve space}

The boundary curve space can be computed from the complex $\FH.$ The surfaces carried by the segment $[5,3]$ in Figure~\ref{fig:whl_fh_mi} have the same boundary curves are those carried by $[5,2].$ Similar for $[3, 5']$ and $[2, 5'].$ All other surfaces are uniquely determined by their boundary curves. Moreover, the projectivised curves match up as shown in Figure~\ref{fig:whl_fh_bcn}. The dotted blue segments correspond to boundary curves  of surfaces that are not meridionally incompressible.

\begin{figure}[h]
  \begin{center}
       \subfigure[Boundary curve space]{\label{fig:whl_fh_bcn}
      \includegraphics[height=4.5cm]{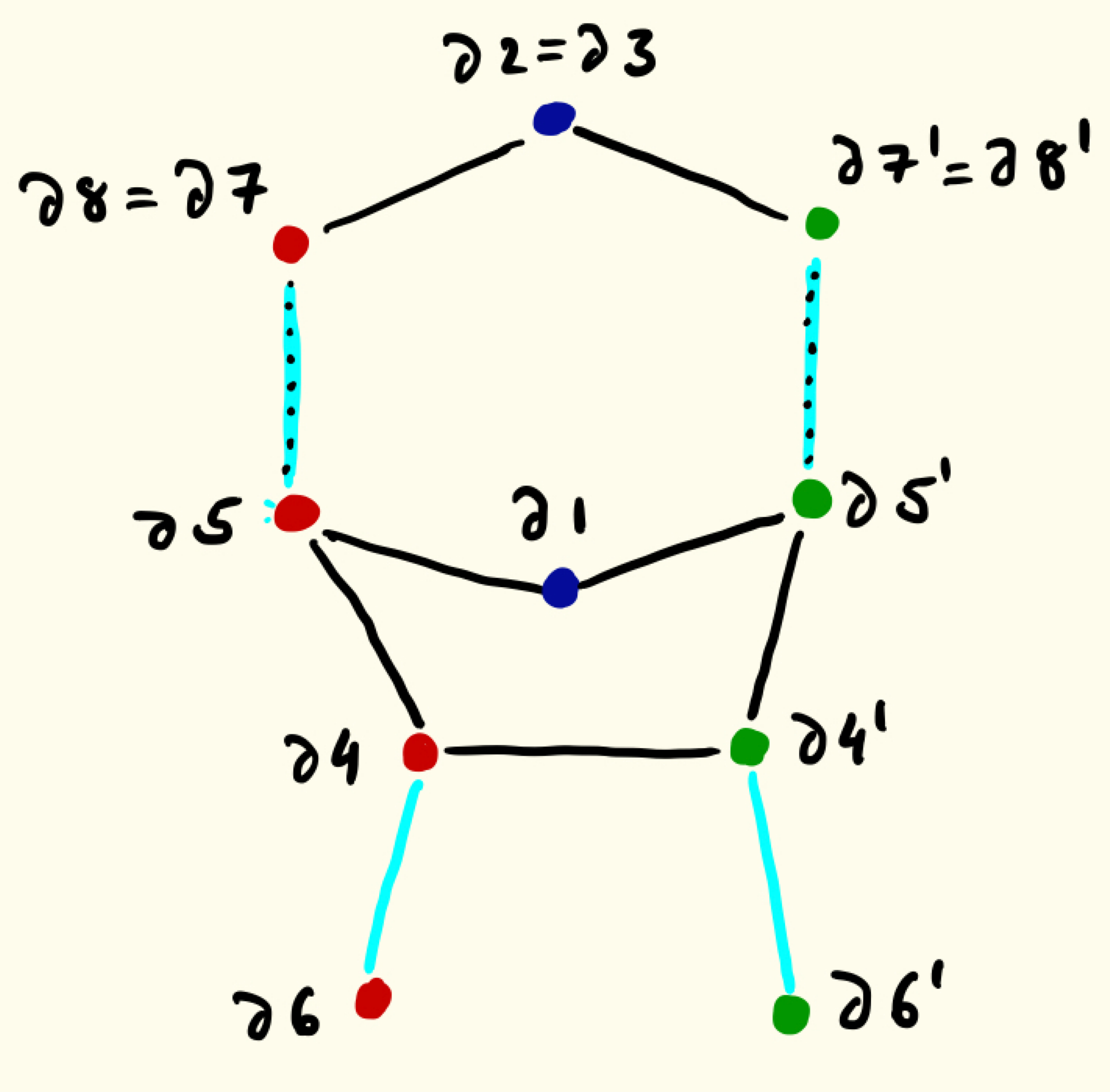}}
      \qquad
         \subfigure[meridional incompressible surfaces]{
      \includegraphics[height=4.5cm]{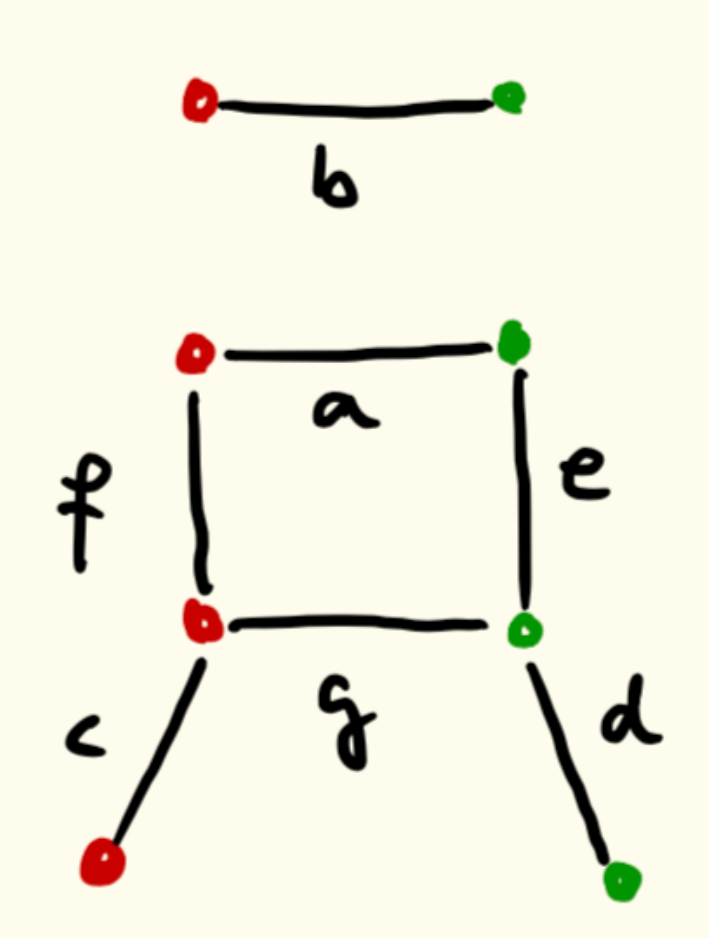}}
    \qquad
    \subfigure[all incompressible surfaces]{
      \includegraphics[height=4.5cm]{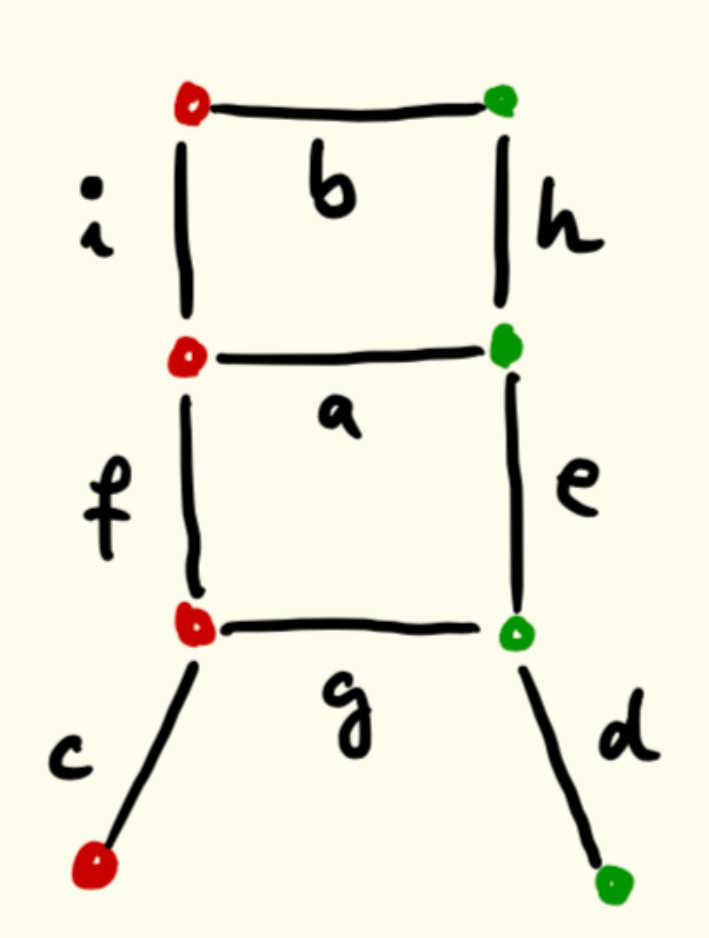}}
         \end{center}
  \caption{Boundary curve space}
  \label{fig:whl_fh_boundary}
\end{figure}

Using the notation of Lash~\cite{la}, the boundary curve space is described explicitly as:
\begin{align*}
& \{ (-2, t, -2t, 1) \mid t \in [0,\infty] \} && a \\
\cup & \{ (0, t, 0, 1) \mid t \in [0,\infty] \} && b\\
\cup & \{ (2t, t, 4, 1) \mid t \in (0,1) \} && c\\
\cup & \{ (4t, t, 2, 1) \mid t \in (1,\infty) \} && d\\
\cup & \{ (2t+2, t, 2t, 1) \mid t \in (0,1) \} && e\\
\cup & \{ (2, t, 2t+2, 1) \mid t \in (1,\infty) \} && f\\
\cup & \{ (-t+3, 1, t+3, 1) \mid t \in [-1,1 \} && g\\
\cup & \{ (-2t, 0, 0, 1) \mid t \in [0,1] \} && h\\
\cup & \{ (0, 1, -2t, 0) \mid t \in [0,1] \} && i\\
\end{align*}
Lash~\cite{la} only lists segments $a$--$g$ and we include $h$ and $i$ to account for the surfaces that are not meridional incompressible. We also changed the sign of the meridian coordinates in \cite{la} to account for a difference in conventions. We remark that 
Lash~\cite{la} and Hoste-Shanahan~\cite{hs} appear to compute only the space of projectivised boundary slopes of essential surfaces that are 
meridional incompressible, following only the main algorithm of Floyd and Hatcher. This explains why their spaces are not connected.


\section{Surfaces and boundary curves arising from degenerations}
\label{whl:surfaces arising from degenerations}

The logarithmic limit set of the deformation variety $\D(\tri_\whl)$ and the logarithmic limit set of the eigenvalue variety $\PEi_0(\whl)$ are determined in this section. Together with the results on essential surfaces of the previous section, these computations show that all strict boundary curves of $\whl$ are detected by the Dehn surgery component of the character variety. Since the boundary curves of fibres are detected by reducible characters, this shows that all boundary slopes of the Whitehead link are strongly detected. In addition, it is shown that $\D_\infty(\tri_\whl)$ has a fake ideal point, and some interesting degenerations are analysed in detail.


\subsection{The logarithmic limit sets}

The tropical variety $\D_\infty(\whl)$ is determined in Appendix~\ref{sec:log lim debbie} and shown in Figure~\ref{fig:whl_log_lim-D}. The above analysis of the rational maps and the normal surfaces together with the results concerning strongly detected boundary curves in \cite{defo} also determines the logarithmic limit set of $\PEi_0(\whl)$, which is shown in Figure~\ref{fig:whl_log_lim-E}. As discussed above, the isotopy relation is given by the action of the Klein four group $K_f,$ and the respective quotient spaces are shown in Figure~\ref{fig:whl_detected}.

\begin{figure}[h]
  \begin{center}
    \subfigure[Logarithmic limit set of $\D(\tri_\whl)$]{\label{fig:whl_log_lim-D}
      \includegraphics[width=6.1cm]{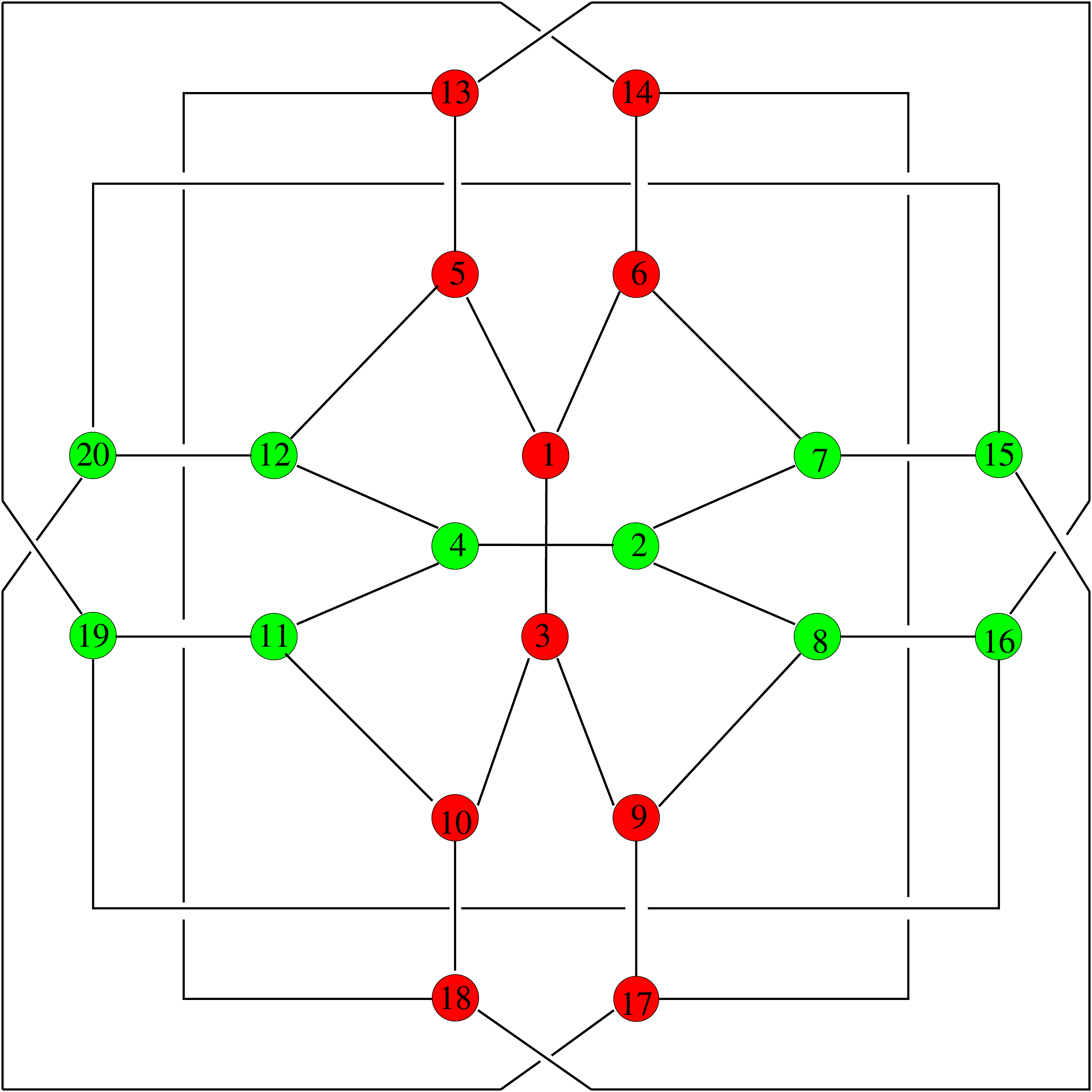}}
    \quad
    \subfigure[Logarithmic limit set of $\PEi_0(\whl )$]{\label{fig:whl_log_lim-E}
      \includegraphics[width=6.1cm]{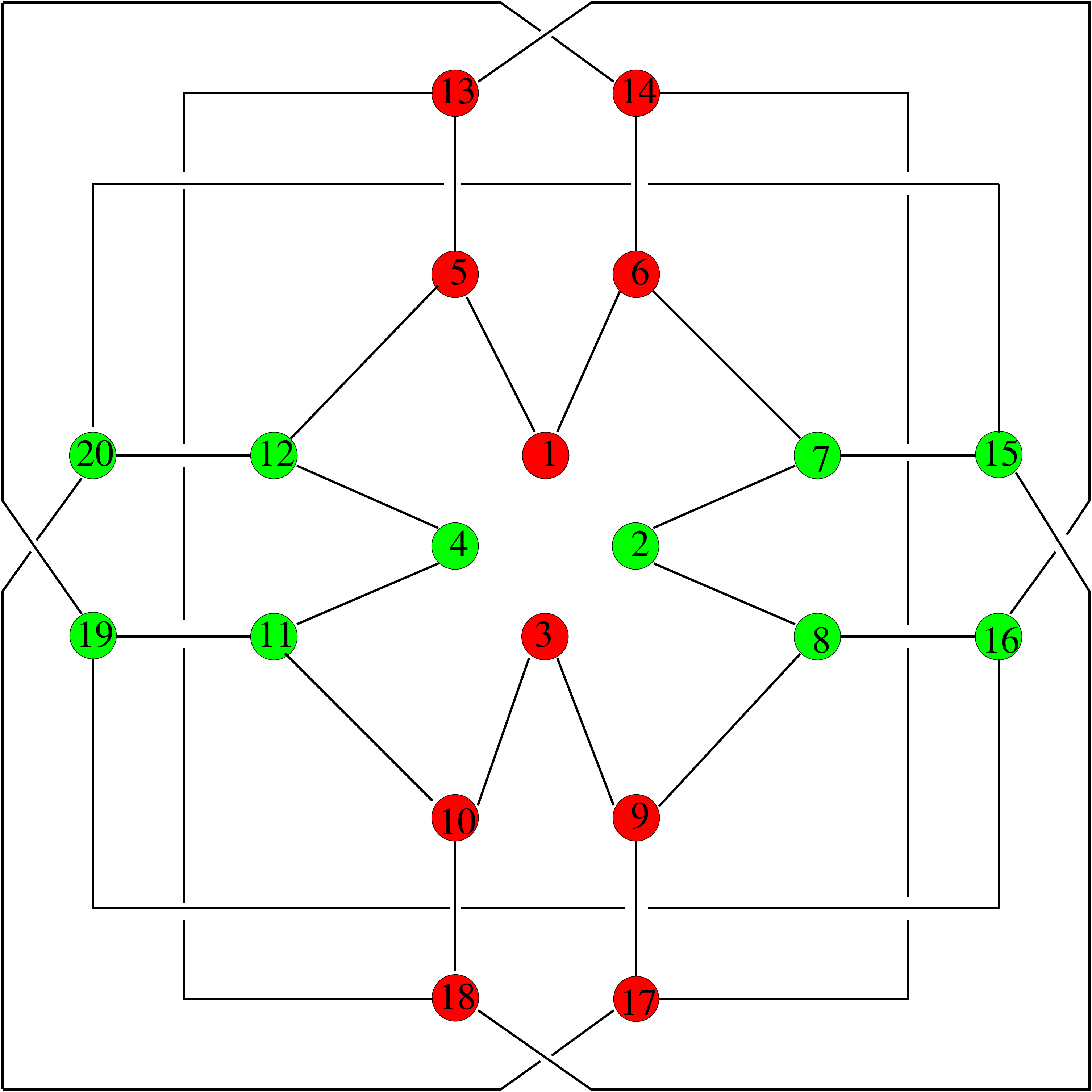}}
  \end{center}
  \caption{The logarithmic limit sets of $\D(\tri_\whl)$ and $\PEi_0 (\whl)$: The labels of the nodes correspond to the vertex solutions; the shown PL arcs between them can be realised as geodesics in $S^{11}$.}
  \label{fig:whl_log_lim}
\end{figure}

\begin{figure}[h]
  \begin{center}
    \subfigure[Detected surfaces]{
      \includegraphics[height=5cm]{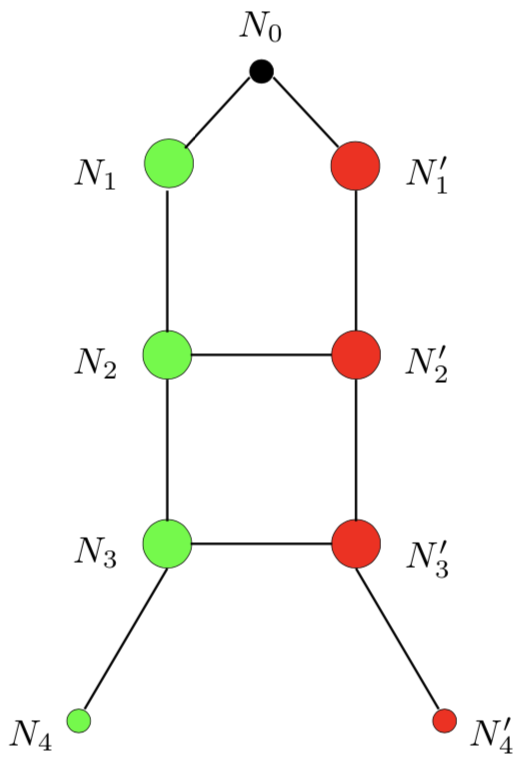}}
    \qquad
    \subfigure[Detected boundary curves]{
      \includegraphics[height=3.7cm]{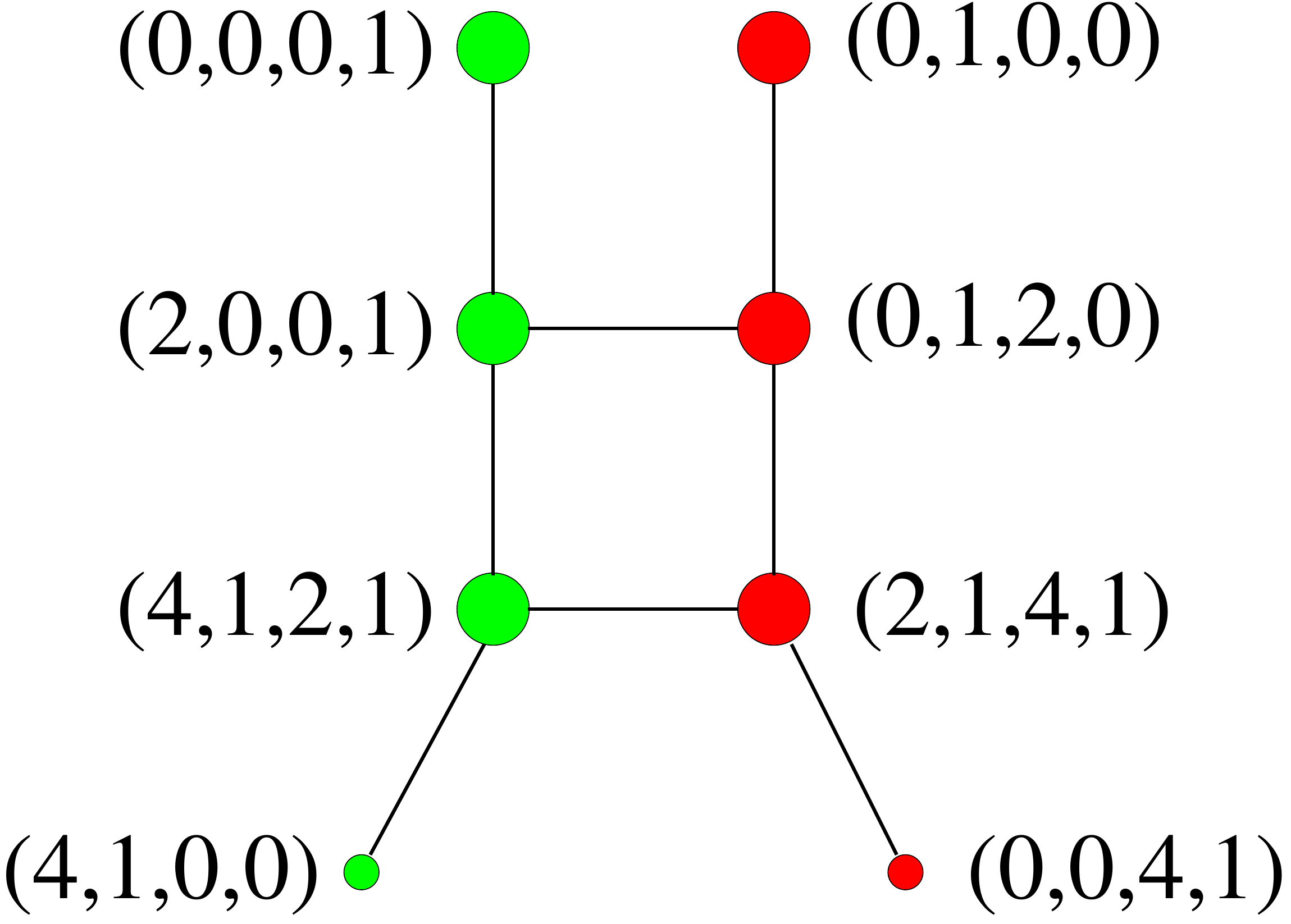}}
  \end{center}
  \caption{The logarithmic limit sets of $\D(\tri_\whl)$ and $\PEi_0 (\whl)$ modulo $\Z^2_2$}
  \label{fig:whl_detected}
\end{figure}

It follows from this calculation and Theorem~\ref{comb:essential prop} that all non-fibre essential surfaces in $\whl$ are detected by the deformation variety and the character varieties. Moreover, no fibre is detected by the deformation variety nor the geometric component in the character variety. We also note that 
\begin{enumerate}
\item there is a \emph{fake ideal point}, namely the centre $V_0$ of the central square, which corresponds to an ideal point of the deformation variety that maps to a finite point of the character variety and the associated surface compresses to two boundary parallel tori;
\item each ideal point that is in the interior of the segments $[V_1,V_0]$, $[V_2,V_0]$, $[V_3,V_0]$ or $[V_4,V_0]$ detects a surface that is not incompressible but can be compressed to an essential dual surface.
\end{enumerate}


\subsection{Detected slopes}

The relationship between ideal points of the $\PSL$--eigenvalue variety $\PEi(\whl)$ and ideal points of the deformation variety $\D(\whl)$ has already been discussed, and it follows that the logarithmic limit set of $\PEi_0(\whl)$ is as pictured in Figure~\ref{fig:whl_log_lim}(b). In particular, all these slopes arise from sequences of irreducible representations. This was already shown for the slopes of meridional incompressible surfaces by Lash~\cite{la}, and the present computation extends this to the boundary slopes of all essential surfaces that are not fibres.

\begin{obs}
Let $S \subset \whl$ be an essential surface that is not a fibre. Then the boundary slopes of $S$ are strongly detected by a sequence of irreducible representations.
\end{obs}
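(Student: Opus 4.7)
The plan is to combine the classification of non-fibre essential surfaces from Section~\ref{sec:Incompressible normal surfaces} with the computation of the logarithmic limit set $\PEi_\infty(\whl)$ shown in Figure~\ref{fig:whl_log_lim-E}. In outline I would (i) locate the projectivised spun-normal coordinate of $S$ inside $\N(\tri_\whl)$, (ii) transport it through the homeomorphism of Proposition~\ref{comb:homeo} and the birational isomorphism $\Psi$ of Lemma~\ref{whl:degree one lemma} to a rational ideal point $\xi \in \PEi_\infty(\whl)$, and (iii) lift a detecting curve from $\PEi_0(\whl)$ to $\PTa(\whl)$ to extract a sequence of irreducible representations.

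For steps (i) and (ii), the observations in Section~\ref{sec:Incompressible normal surfaces} place $[N(S)] \in \N(\tri_\whl)$ off the interior of the central square and off the interiors of its edges. The explicit computation displayed in Figure~\ref{fig:whl_log_lim-D} then shows that $[N(S)]$ arises from a rational ideal point $\eta \in \D_\infty(\tri_\whl)$ under the homeomorphism $N\co \D_\infty(\tri_\whl) \to \N(\tri_\whl)$ supplied by Proposition~\ref{comb:homeo}. Setting $\xi := \Psi(\eta)$ produces a rational point of $\PEi_\infty(\whl)$, and Lemma~\ref{psl boundary slopes lemma} guarantees that $\varphi(P_h \xi)$ is the projectivised boundary coordinate of a strongly detected essential surface; comparison with Figure~\ref{fig:whl_fh_boundary} identifies this projectivised slope with that of $S$.

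For step (iii), I would choose a curve $E \subset \PEi_0(\whl)$ with $\xi \in E_\infty$ via Lemma~\ref{curve finding lemma}, and pull it back along $\pee \co \PTa(\whl) \to \PEi_0(\whl)$ to a curve $C \subset \PTa(\whl)$ with an ideal point lying over $\xi$. From the parametrisation in Section~\ref{whl:tautologicum}, the reducible $\PSL$--representations in $\PTa(\whl)$ are exactly those with $d=0$, and they form a proper subvariety of the irreducible surface $\PTa(\whl)$, which also contains the discrete faithful representation. Consequently $C$ is not contained in $\{d=0\}$, and all but finitely many terms of any sequence $\{\prho_n\} \subset C$ converging to an ideal point over $\xi$ are irreducible. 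Applying $\pte$ yields the desired detecting sequence of irreducible characters in $\PX_0(\whl)$.

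The main obstacle will be step (iii): one must ensure that the pullback curve $C$ is not forced into the reducible locus of $\PTa(\whl)$. If the first preimage component of $E$ happens to lie in $\{d=0\}$, one selects instead a component of $\pee^{-1}(E)$ that meets the complement of $\{d=0\}$, which exists because $\pee$ has finite fibres and $\PTa(\whl)$ is not contained in the reducible locus. A secondary subtlety is verifying by inspection that Figure~\ref{fig:whl_fh_boundary} exhausts the projectivised boundary slopes of every non-fibre essential surface in $\whl$, which is exactly what was established in Section~\ref{sec:Incompressible normal surfaces}.
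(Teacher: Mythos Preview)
Your overall strategy matches the paper's, but step~(iii) contains a genuine gap. The claim that the pullback curve $C=\pee^{-1}(E)$ avoids the reducible locus $\{d=0\}$ does not follow from the reasons you give: that $\{d=0\}$ is a proper subvariety of $\PTa(\whl)$ says nothing about whether a \emph{particular} curve lies in it, and since $\pee$ is a birational isomorphism (degree one, as computed in \S\ref{whl:compute eigen}), the preimage $\pee^{-1}(E)$ is generically a single curve---there is no ``other component'' to select. Concretely, for the ideal points corresponding to the once-punctured tori in $N_1$ and $N'_1$, the curve $E$ furnished by Lemma~\ref{curve finding lemma} could well be one of the reducible lines $\{(\s,1,1,1)\}$ or $\{(1,1,\u,1)\}$ in $\PEi_0(\whl)$, whose preimage in $\PTa(\whl)$ lies entirely in $\{d=0\}$.

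The fix is simpler than your detour and is exactly what the paper does: you already have a rational point $\eta\in\D_\infty(\tri_\whl)$ from step~(ii), so apply Lemma~\ref{curve finding lemma} \emph{there} to obtain a curve $C'\subset\D(\tri_\whl)$ with $\eta\in C'_\infty$. Since the third coordinate of $\edefo$ is $z-1$ and $z\neq 1$ everywhere on $\D(\tri_\whl)$, the image $\edefo(C')$ lies entirely in $\{d\neq 0\}\subset\PTa(\whl)$, so every representation along the detecting sequence is irreducible. This is precisely what separates the curves coming from the deformation variety from the reducible curves that share the same ideal point in $\PEi_\infty(\whl)$: along the former the longitude eigenvalue is constantly $-1$, along the latter it is constantly $1$.
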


It is well known that the boundary slopes of fibres are strongly detected by sequences of reducible representations. The slopes detected by both reducible are irreducible representations are precisely the slopes of the once-punctured tori in $N_1$ and $N'_1.$
There are sequences of shape parameters which converge to ideal points corresponding to elements of $N_1$ and $N'_1$ giving rise to sequences of irreducible representations in the character variety since the eigenvalue of a longitude is constant equal to $-1$ throughout the degeneration. There also are sequences of reducible representations
detecting the same slopes: 
\begin{equation*}
\rho_m (\m_i) = \begin{pmatrix} 1 & 1 \\ 0 & 1 \end{pmatrix}
\qquad \text{and} \qquad
\rho_m(\m_j) = \begin{pmatrix} m & 0 \\ 0 & m^{-1} \end{pmatrix},
\end{equation*}
where $m \to 0$ or $m \to \infty.$


\subsection{Some interesting degenerations}

Recall that $\D'(\tri_\whl)$ is defined by
\begin{align} \label{whl:det det}
1 = wxyz \quad \text{and }\quad
wx(1 - y)(1 - z) = (1 - w)(1 - x)yz,
\end{align}
and that for any $w,x,y,z \in \C -\{0, 1\}$ subject to these equations,
there is a unique point on $\D (\whl )$. We will now describe sequences of
points in $\D'(\tri_\whl)$ which correspond to well--defined sequences in
$\D (\whl )$ approaching the desired ideal points.

1. The point $(w,x,y,z) = (w, -w^{-1},w,-w^{-1})$ satisfies equations
(\ref{whl:det det}) for any $w \in \C-\{0\}$, and if $w$ has positive
imaginary part, it defines a positively oriented triangulation of $\whl$,
and in particular an incomplete hyperbolic structure on $\whl$. This implies
that the triangulation can be deformed through positively oriented
tetrahedra from the complete structure where $w=i$ to the ideal points where
$w\to 0$ or $w\to \pm 1$. These points correspond to $V_1$
and $V_3$ for $w\to 1$ and $w\to -1$ respectively, and to
$\frac{1}{2}(V_{16}+V_{19})$ for $w \to 0$, since the growth rates of all
parameters are equal.

We give one example, where we scale to integer coordinates. As $w \to 0$,
the ideal point 
\begin{equation*}
\xi = (-1,0,1,1,-1,0,-1,0,1,1,-1,0)
\end{equation*}
is approached. The corresponding normal $Q$--coordinate is
\begin{gather*}
N(\xi) = (0,1,0,0,0,1,0,1,0,0,0,1),\\
\text{whilst}\quad
\frac{1}{2}(N(S_{16})+N(S_{19})) = (0,1,0,0,0,1,0,1,0,0,0,1),
\end{gather*}
which we can rescale to $\frac{1}{2}(V_{16}+V_{19})$.

Moreover, one easily obtains the limiting eigenvalues. As $w\to 0$ there are
the following power series expansions for the resulting eigenvalues: 
\begin{align*}
\s &= 1, 
&& t = -1,\\
\u &= \frac{1+w}{w(1-w)} = \frac{1}{w} + 2 \sum_{i=0}^\infty w^i, 
&& v = w^2.
\end{align*}
The first two equations reflect the fact that one cusp is complete, and
comparing the growth rates at the second cusp (remembering that $\u$ is the
square of an eigenvalue, whilst $v$ is an eigenvalue) gives a point in
boundary curve space with coordinates $[0,0,4,1]$.

2. For any $w \in \C -\{0, \pm 1\}$, the point
\begin{equation*}
(w,x,y,z) = (w,\frac{1-w}{1+w}, -\frac{1+w}{1-w} ,-w^{-1}) \in \D'(\tri_\whl)
\end{equation*}
gives a point on $\D (\whl )$.
Note that the associated triangulation involves either only flat tetrahedra
or both positively and negatively oriented ones. One may use power series
expansions to show that a detected surface for $w\to 0$ is $S_8$:
\begin{align*}
x = \frac{1-w}{1+w} = 1 + 2 \sum_{i=1}^\infty (-1)^i w^i,&&
y = -\frac{1+w}{1-w} = -1 - 2 \sum_{i=1}^\infty w^i,&&
z = -\frac{1}{w}.
\end{align*}
Thus, as $w\to 0$, the ideal point
\begin{equation*}
\xi = (-1,0,1,0,1,-1,0,0,0,1,-1,0)
\end{equation*}
is approached. The corresponding normal $Q$--coordinate is
\begin{equation*}
N(\xi) = (0,1,0,1,0,0,0,0,0,0,0,1),
\end{equation*}
which coincides with the normal $Q$--coordinate of $S_8$, and may be
rescaled to $V_8$.

3. Detecting elements of $N_3 \cup N'_3$ is a little more involved. Let
\begin{align*}
w &= \frac{-1 - \eps^3 + \sqrt{5 - 4 \eps + 4 \eps^2 - 2 \eps^3 +
\eps^6}}{2(1+\eps^2)}
&& x = \eps\\
y &= -\eps \frac{2(1+\eps^2)}{-1 - \eps^3 + \sqrt{5 - 4 \eps + 4 \eps^2 - 2
\eps^3 + \eps^6}}
&& z = -\eps^{-2}
\end{align*}
Formal substitution of these assignments in the defining equations of the
deformation variety shows that whenever the expressions are defined for any
small, non--zero $\eps$, they determine a point of $\D (\whl )$.
Since $w$ is differentiable at $\eps=0$ and takes the value
$w(0) = \frac{-1\pm \sqrt{5}}{2}$, it follows that $w$ and $y$ have
converging power series expansions at $\eps=0$, and $y$ has a zero of order
one at $\eps=0$. The first few terms are:
\begin{align*}
w(\eps) &= \frac{-1\pm \sqrt{5}}{2} - \frac{1}{\sqrt{5}}\eps + 
        \big(1 - \frac{17}{5\sqrt{5}}\big) \eps^2 
	+ \big(-3 + \frac{123}{25\sqrt{5}}\big)\eps^3
+{\ldots} \\
y(\eps) &= - \frac{2}{-1+\sqrt{5}} \eps -
\frac{8}{\sqrt{5}(-1+\sqrt{5})^2}\eps^2 +
{\ldots}
\end{align*}
Thus, as $w\to 0$, we approach
\begin{equation*}
\xi = (0,0,0,-1,0,1,-1,0,1,2,-2,0).
\end{equation*}
A detected surface is therefore $S_{13}$.

4. Let $(w,x,y,z) = (w, w^{-1},w^{-1}, w)$.
Then $w \to 1$ approaches an ideal point corresponding to $V_0$.
The chosen degeneration is through triangulations involving
positively and negatively oriented tetrahedra, or triangulations which are
entirely flat. In fact, there cannot be a degeneration through positively
oriented tetrahedra to this ideal point, since the hyperbolic gluing
equation $1 = w x y z$ would imply that throughout this degeneration
$\arg (w) + \arg (x) + \arg (y) + \arg (z) = 2 \pi$, whilst for each 
parameter the argument converges to zero.

Experimentation with {\tt SnapPy }\rm suggests that elements from all but
the fourth set can be approached through degenerations only involving
positively oriented tetrahedra. Using the projection of the right--handed
Whitehead link, the following surgery coefficients can be approached through
degenerations involving positively oriented triangulations: 
\begin{itemize}
\item $(\infty) ,(4,1)$. Four tetrahedra degenerate and the splitting
surface is a 1--sided Klein bottle. 
\item $(\infty) ,(0,1)$. Two tetrahedra degenerate and two become flat
with parameters equal to $-1$. The splitting surface is a 2--sided torus.
\item $(4,0),(0,2)$. Three tetrahedra degenerate, the remaining tetrahedron
has shape $\frac{1}{2} + \frac{1}{2} i$. The limiting orbifold has volume
approximately $0.9159$. ({\tt SnapPy }\rm does not compute a splitting surface if all cusps have
been filled.)
\item $(8,2),(4,2)$. Three tetrahedra degenerate, the remaining tetrahedron
has shape $1+i$, and the limiting orbifold has volume approximately $0.9159$. (Dito concerning the splitting surface.)
\end{itemize}

These examples will be put in a general framework in \cite{splittings}.


\appendix

\section*{Appendices} 


\section{Manifold data}
\label{sec:snap data}

The ``gluing and completeness'' data obtained from \tt{SnapPy }\rm is given in Table \ref{tab:snap data whl}. The relationship to the shape parameters used here is: $w = z_1',$ $x = z_2'',$ $y = z_3''$ and $z = z_4''.$ From this, one can verify that the holonomies of the peripheral elements given by
\tt{SnapPy }\rm are the inverses of the holonomies of the meridians and geometric longitudes given above, where \tt{SnapPy}\rm's cusp $0$ corresponds to the green cusp (here:\thinspace cusp 1), and cusp $1$ to the red cusp (here:\thinspace cusp 0).
\begin{table}[h]
\begin{center}
\begin{small}
\begin{tabular}{| l | r r r r | r r r r | r| }
\hline
   &$z_1$&$z_2$&$z_3$&$z_4$&$1-z_1$&$1-z_2$&$1-z_3$&$1-z_4$&\\
\hline
$H'(\m_0)$& 1& 0& --1& 0& --1& 0& 1& 1& 0\\ 
$H'(\m_1)$& 0& 0& 0& 1& 1& --1& 0& 0& 0\\
$H'(\l_0)$&1& 0& 0& 0& 0& 1& --1& 1& 0\\
$H'(\l_1)$&--1& 0& 0& 0& 0& --1& --1& 1& 0\\ 
\hline
$e_1$&1& 1& 1& 1& 1& --2& 0& 0& --1\\
$e_2$&0& --1& --1& --1& --1& 1& 1& 1& 1\\
$e_3$&--1& 1& 1& 1& 1& 0& --2& --2& --1\\
$e_4$&0& --1& --1& --1& --1& 1& 1& 1& 1\\
\hline
\end{tabular}
\end{small}
\end{center}
\caption{\tt{SnapPy}\rm's gluing matrix for $m129$}
\label{tab:snap data whl}
\end{table}


\section{Eigenvalue variety}
\label{sec:Eigenvalue variety}

\subsection{Eigenvalue variety}
\label{whl:eigenvalue variety}
\label{whl:compute eigen}

Denote the component in $\Ei (\whl )$ arising from reducible
representations by $\Ei^r (\whl ).$ We have
$\Ei_0 (\whl ) = \overline{\Ei (\whl ) - \Ei^r (\whl )},$ where the overline
denotes the Zariski closure. With respect to the chosen affine coordinates,
we have $\Ei^r (\whl ) = \{ t = v = 1\},$ which is 2--dimensional. 
$\Ei_0 (\whl )$ is 2--dimensional since it is the closure of the image of
the eigenvalue map.  From this, it also follows that the intersection of
$\Ei_0 (\whl )$ and $\Ei^r (\whl )$ is a union of two lines:
\begin{equation*}
\Ei^r(\whl ) \cap \Ei_0(\whl ) = \{ s = t = v = 1\} \cup \{ t = u = v = 1\}.
\end{equation*}
The following calculations use well known elimination and extension theorems which can be found in \cite{clo}. Consider $\Ta (\whl)$ and the eigenvalue map $\ee$ (see equation (\ref{whl:eigenvalue map})). The rational functions determining the eigenvalues of the longitudes are: 
\begin{align*}
t =&\vartheta_{\l^t_0}(\rho ) 
  =s^{-2}-s^{-2}u^2+u^2
   + c(2 s^{-1}u + s^{-3}u^{-1}-s^{-3}u) + c^2 s^{-2},\\
v =&\vartheta_{\l^t_1}(\rho )  
  =u^{-2}-u^{-2}s^2+s^2
   + c(2 u^{-1}s + u^{-3}s^{-1}-u^{-3}s) + c^2 u^{-2}.
\end{align*}
Then $s^2 t - u^2 v = s^2-u^2 + c(su^{-1}-s^{-1}u).$ Since $\Ta (\whl )$ is not contained in the hyperplane $s^2=u^2,$ this shows that the map $\ee: \Ta (\whl ) \to \Ei_0 (\whl)$ has degree one, and also determines an inverse mapping:
\begin{equation}\label{whl:eigen to taut}
(s,t,u,v) \to \bigg(s,u, \frac{s^2(t -1) + u^2 (1-v)}{su^{-1}-s^{-1}u}\bigg).
\end{equation}

Recall the defining equation (\ref{whl:taut_rel}) of $\Ta (\whl)$:
\begin{align*}
f_1 &= (s-s^{-1})(u-u^{-1}) 
         + c(s^{-2}u^{-2}-u^{-2}-s^{-2}+4-s^2-u^2+s^2u^2)\\
    &\qquad +c^2(2s^{-1}u^{-1}-su^{-1}-s^{-1}u+2su) + c^3.
\end{align*}
Two additional polynomials $f_2 = s^3 u t + {\ldots} $ and $f_3 = s u^3 v + {\ldots} $ are obtained from the above expressions for $t$ and $v.$ The only variable to be eliminated is $c,$ and the leading coefficients of $c$ in $f_1,$ $f_2$ and $f_3$ are monomials in $\C[s,u].$ The elimination is done using resultants (see \cite{clo} for details). Since $s$ and $u$ are units, it follows that the eigenvalue variety is defined by $Res(f_1,f_2,c) = Res(f_1,f_3,c) = Res(f_2,f_3,c)=0.$ Eliminating redundant factors from the resultants gives the following set of defining equations for $\Ei_0 (\whl )$:
\begin{align*}
h_1 &=t-s^2t+s^2t^2-s^4t^2-u^2-2s^2tu^2+s^4tu^2-t^2u^2+2s^2t^2u^2\\ 
&\qquad +s^4t^3u^2+tu^4-s^2tu^4+s^2t^2u^4-s^4t^2u^4,\\
h_2 &= s^2-v-s^4v+u^2v+2s^2u^2v+s^4u^2v-s^2u^4v+s^2v^2-u^2v^2\\ 
&\qquad -2s^2u^2v^2-s^4u^2v^2+u^4v^2+s^4u^4v^2-s^2u^4v^3,\\
h_3 &= s^4t-s^6t-s^2tu^2+s^4tu^2+s^6t^2u^2-s^2u^2v\\
&\qquad +u^4v+s^2u^4v-2s^4tu^4v-u^6v+s^2u^6v^2.
\end{align*}

The mapping (\ref{whl:eigen to taut}) is not defined when $s^2=u^2.$ One may now compute the defining equations of the subvariety of $\Ei_0(\whl)$ on which it is not defined:
\begin{align*}
s^2 = u^2, && t = v, && 
0 = -t + s^2(1+t)+s^4t(1-t)-s^6t^2(1+t)+s^8t^2.
\end{align*}

With a little more effort, one can compute the following inverse mappings defined on two open sets, which cover all but eight points of the eigenvalue variety: 
\begin{align}
\label{whl:ev: inv sl 1}\ee^{-1}_1: \Ei_0(\whl ) \to \Ta (\whl ) && 
(s,t,u,v) &\to \Bigg(s,u,\frac{(s^2-v)(1-u^2)}{su(1+v)}\Bigg)\\
\label{whl:ev: inv sl 2}\ee^{-1}_2: \Ei_0(\whl )\to \Ta (\whl ) && 
(s,t,u,v) &\to \Bigg(s,u,\frac{(u^2-t)(1-s^2)}{su(1+t)}\Bigg)
\end{align}
The points corresponding to the complete structure are always singularities of the eigenvalue variety --- here determined by the points where $t=v=-1$ and $s^2 = u^2 =1.$ The other points of $\Ei_0 (\whl )$ where neither of the above maps are defined are subject to $t=v=-1$ and $s^2 = u^2 = -1.$

The defining equations for the $\PSL$--eigenvalue variety can be worked out
from $\PTa (\whl )$ similarly to the above, or from the results of the
previous subsection using Lemma \ref{whl: eigenvalue maps lem}. In
particular, one has:
\begin{align*}
t =&\vartheta_{\l^t_0}(\prho_{GL} )  
  =\s^{-1}-\s^{-1}\u+\u
   + d(2 \s^{-1} + \s^{-2}\u^{-1}-\s^{-2}) + d^2 \s^{-2}\u^{-1},\\
v =&\vartheta_{\l^t_1}(\prho_{GL} )  
  =\u^{-1}-\u^{-1}\s+\s
   + d(2 \u^{-1} + \u^{-2}\s^{-1}-\u^{-2}) + d^2 \u^{-2}\s^{-1}.
\end{align*}
Then $\s\u^2 v - \s^2\u t  = \s\u^2-\s^2\u+d(\u-\s)$ implies that the
degree of the map $\PTa (\whl ) \to \PEi (\whl )$ is equal to one.
It follows from the above discussion that there are only two points where
neither of the following inverse maps is not defined:
\begin{align}
\label{whl:ev: inv psl 1}\pee^{-1}_1: \PEi_0(\whl ) \to \PTa (\whl ) && 
(\s,t,\u,v) &\to \Bigg(\s,\u,\frac{(\s-v)(1-\u)}{(1+v)}\Bigg)\\
\label{whl:ev: inv psl 2}\pee^{-1}_2: \PEi_0 (\whl )\to \PTa (\whl ) && 
(\s,t,\u,v) &\to \Bigg(\s,\u,\frac{(\u-t)(1-\s)}{(1+t)}\Bigg).
\end{align}

The results of this and the previous subsections are summarised in the following lemma. This provides an alternative proof of Lemma \ref{whl:degree one lemma}.
\begin{lem}
The varieties $\Ta (\whl )$ and $\Ei_0 (\whl )$ are birationally equivalent, and so are the varieties $\PTa (\whl )$ and $\PEi_0 (\whl ).$
\end{lem}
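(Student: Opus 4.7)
The plan is to collect the explicit rational inverses constructed earlier in this subsection and verify that they do indeed invert the eigenvalue maps on Zariski open dense subsets. Both $\Ta(\whl)$ and $\Ei_0(\whl)$ are irreducible of complex dimension two (as are $\PTa(\whl)$ and $\PEi_0(\whl)$), and the eigenvalue maps $\ee$ and $\pee$ are regular and dominating by Lemma~\ref{whl: eigenvalue maps lem}. Thus, to establish birational equivalence, it suffices to display a rational section defined on a Zariski open dense subset of the target.

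For the $\SL$ statement, I would begin by recording the direct computation (already used in deriving~(\ref{whl:eigen to taut})) that, after substituting the polynomial expressions for $t = \vartheta_{\l^t_0}$ and $v = \vartheta_{\l^t_1}$ in terms of $s, u, c$, the $c^{2}$ contributions cancel and one obtains the identity
$$s^2 t - u^2 v = s^2 - u^2 + c\left(su^{-1} - s^{-1}u\right).$$
Solving for $c$ yields exactly the map~(\ref{whl:eigen to taut}). The only point that requires a separate check is that $\Ta(\whl)$ is not contained in the hypersurface $\{s^2 = u^2\}$; this is immediate from the existence of explicit points with $s^2 \neq u^2$, for instance via the one-parameter family $(w, -w^{-1}, w, -w^{-1}) \subset \D'(\tri_\whl)$ of \S\ref{whl:surfaces arising from degenerations} pushed through the developing-map formulas of \S\ref{whl:tautologicum}. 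Hence the map~(\ref{whl:eigen to taut}) is a well-defined rational inverse to $\ee$ on a dense open subset, establishing the first birational equivalence. The refined inverses $\ee^{-1}_1$, $\ee^{-1}_2$ of~(\ref{whl:ev: inv sl 1})--(\ref{whl:ev: inv sl 2}), which express $c$ purely in terms of $(s,t,u,v)$, arise by using the defining relation $f_1=0$ of $\Ta(\whl)$ to eliminate $c$ from the expression~(\ref{whl:eigen to taut}) in two different ways.

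The $\PSL$ case proceeds in exactly the same manner, using the parallel identity
$$\s\u^2 v - \s^2\u t = \s\u^2 - \s^2\u + d(\u - \s),$$
whose indeterminacy locus is the hypersurface $\{\s = \u\}$. Since $\PTa(\whl)$ is not contained in this hypersurface (again one may appeal to the deformation family above, or simply to the complete hyperbolic structure), solving for $d$ gives a rational inverse to $\pee$ on a dense open subset, and the two explicit forms $\pee^{-1}_1$, $\pee^{-1}_2$ of~(\ref{whl:ev: inv psl 1})--(\ref{whl:ev: inv psl 2}) are obtained by eliminating $d$ via the defining equation of $\PTa(\whl)$.

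The main (and essentially only) obstacle is the verification that the candidate inverse maps are not everywhere undefined, i.e.\ that the relevant eigenvalue variety is not entirely swallowed by the indeterminacy locus $\{s^2 = u^2\}$ or $\{\s = \u\}$ respectively. In both cases a single point off the bad locus suffices, and such a point is readily produced from the explicit deformations of $\D(\tri_\whl)$ recorded earlier. Once this is noted, the lemma follows by assembling the identities above.
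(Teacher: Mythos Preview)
Your proposal is correct and follows essentially the same route as the paper: the lemma is stated there as a summary of the preceding computations, namely the identity $s^2 t - u^2 v = s^2 - u^2 + c(su^{-1} - s^{-1}u)$ (and its $\PSL$ analogue), the observation that the source variety is not contained in the indeterminacy hypersurface, and the resulting rational inverse~(\ref{whl:eigen to taut}). Your write-up organises exactly these ingredients and even supplies a slightly more explicit witness for the non-containment in $\{s^2=u^2\}$ than the paper does.
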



\subsection{Dehn fillings on one cusp}

If one of the cusps is assumed to be complete, then the resulting subvariety, which parametrises hyperbolic Dehn fillings on the other cusp, is defined by: 
\begin{align}
0 &= 1 - t + 4\s t - \s^2 t + \s^2 t^2 &&\text{when } \u = 1,v = -1,\\
0 &= 1 - v + 4\u v - \u^2v + \u^2v^2 &&\text{when } \s= 1, t = -1.
\end{align}
The boundary curves $(0,0,0,1),$ $(0,0,4,1)$ and $(0,1,0,0),$ $(4,1,0,0)$ are detected by these curves  respectively. The above equations may be written as the following well--defined trace relations: 
\begin{equation}
4 =  \tr\prho(\m_i^2) - \tr\prho(\m_i^2\l_i^t)
  = \tr\prho(\m_i^2) - \tr\prho(\l_i^g) \text{ where } i\in\{0,1\}.
\end{equation}
Curves in $\D(\tri_\whl)$ corresponding to these curves in the eigenvalue variety can readily be determined. Consider points in $\PEi_0 (\whl )$ of the form $(1,-1,\u, v).$ The preimage under $\edefo$ of such a point is of the form $\varphi^{-1} (w,-w^{-1},w,-w^{-1})$ for some  $w\in \C-\{0,\pm 1\},$ and hence $H'(\m_1) = \frac{1+w}{w(1-w)}$ and $H'(\l^t_1) = w^2.$


\subsection{Eigenvalues at $\mathbf{(w,x,y,z) = (1,1,1,1)}$}
\label{whl:eigenvalues at (1,1,1,1)}

Using the implicit function theorem, the eigenvalues at the ideal point 
\begin{equation*}
\xi = (0,1,-1,0,1,-1,0,1,-1,0,1,-1)
\end{equation*}
where $w,x,y,z \to 1$ can be computed.
Note that this point maps to a singularity of $\D'(\tri_\whl).$ We use the
projection $(w,x,y,z) \to (w,y,z),$ with image defined by:
\begin{equation*}
0= g(w,y,z) = w - wy - wz + yz - wy^2z^2 + w^2y^2z^2.
\end{equation*}
The holonomies of meridians take the form
\begin{equation*}
H'(\m_0) = - yz \frac{1-w}{1-y},
\qquad 
H'(\m_1) = - \frac{1-z}{yz(1-w)},
\end{equation*}
and as $\xi$ is approached, the eigenvalues of the longitudes converge to one.
Since the growth rates of all parameters are equal, let 
\begin{align*}
w = 1 + \eps, && y = 1 - \s^{-1} \eps + \eps Y && z = 1 - \u \eps,
\end{align*}
and we are seeking $Y(\eps)$ such that 
$g(w(\eps),y(\eps),z(\eps))=0$ for all
$\eps\in B_\delta(0)$ and $Y(0)=0$ for some $\s,\u \in \C-\{0\}.$
Substitution yields a polynomial
$F(\eps,Y),$ and the implicit function theorem applies if
$(\s-1)(\u-1)=0.$ We have
\begin{equation*}
H'(\m_0) = \frac{(1-\u \eps)(1-\s^{-1} \eps+\eps Y)}{\s^{-1}+Y}
\quad\text{and}\quad
H'(\m_1) = \frac{\u}{(1-\u \eps)(1+\eps)}.
\end{equation*}
Thus, for any $\s,\u \in \C-\{0\},$ one can obtain the limiting eigenvalues
$(1,1,\u,1)$ and $(\s,1,1,1)$ as $\xi$ is approached. These eigenvalues are
precisely the eigenvalues of the reducible representations in $\PTa(\whl).$


\section{Character varieties}
\label{whl:Character varieties}

A complete description of the character varieties associated to $\whl$ is now given.

Since $\Ta (\whl )$ is irreducible, it follows that the $\SL$--character variety is of the form $\X(\whl) = \X_0(\whl) \cup \X^r(\whl),$ where $\X_0(\whl)$ is the Dehn surgery component and $\X^r(\whl)$ is the set of reducible characters. A defining equation for the $\SL$--Dehn surgery component can be worked out from $\Ta (\whl ).$ Let $X = \tr\rho (\m_0 ), Y = \tr\rho (\m_1 )$ and $Z = \tr\rho (\m_0\m_1 ),$ then $\X_0(\whl )$
is defined by
\begin{equation}\label{eq:SL_Dehn surgery component}
0 = F (X,Y,Z) = XY + (2 - X^2 - Y^2)Z + XYZ^2 - Z^3.
\end{equation}
The set of reducible characters is parametrised by $\tr\rho[\m_0,\m_1]=2,$ which is equivalent to:
\begin{equation*}
4 =  X^2 + Y^2 + Z^2 - XYZ.
\end{equation*}

The form of the relator in (\ref{whl:fundamental group}) implies that a $\PSL$--representation lifts to $\SL$ if and only if the relator is equal to
the identity in $\SL$ for any assignment of matrices representing the $\PSL$--representation. The Dehn surgery component of the
$\PSL$--character variety can be worked out from (\ref{eq:SL_Dehn surgery component}). With $\pX = X^2, \pY = Y^2$ and $\pZ = Z^2$ one obtains:
\begin{align*}
0 &= \pF (\pX ,\pY ,\pZ ) \\
&= 
-\pX \pY + (4 - 4 \pX + \pX^2 - 4\pY + \pY^2)\pZ 
- (4 - 2\pX - 2\pY + \pX\pY)\pZ^2  + \pZ^3.
\end{align*}
Note that $\pF (X^2, Y^2, Z^2) = F (X,Y,Z) F(-X,Y,Z) = F (X,Y,Z) F(X,-Y,Z).$

Computation of $\PSL$--representations which do not lift to $\SL$ reveals
that there is only a finite set, parametrised by
$(\pX, \pY, \pZ) = (0,0,2 \pm \sqrt{2}).$ These points do not satisfy $\pF,$
and the corresponding representations are irreducible on the peripheral
subgroups, with image isomorphic to $\Z_2 \oplus \Z_2.$ In particular, they
do not contribute any points to the $\PSL$--eigenvalue variety. Thus, all
boundary curves which are detected by $\PEi_0(\whl)$ are also detected by
$\Ei_0(\whl).$


\section{The Logarithmic limit set}
\label{sec:log lim debbie}

The following calculation was first done by Debbie Yuster. 
The ideal formed by the hyperbolic gluing equations and parameter relations given in Section \ref{sec:defo equations} is:
\begin{align*}
I=&\langle wxyz-1, w'x'y'z'w''^2x''^2-1, w'x'y'z'y''^2z''^2-1, \\
    &w-ww''-1, w'-ww'-1, w''-w''w'-1, \\
    & x-xx''-1, x'-x'x-1, x''-x''x'-1,\\
    & y-yy''-1, y'-y'y-1, y''-y''y'-1, \\
    & z-zz''-1, z'-z'z-1, z''-z''z'-1\rangle.
\end{align*}
Solving the first three expressions for $z,$ $z',$ and $z''$ respectively gives:
\begin{equation*}
z=  \frac{1}{wxy},   \qquad
z'=\frac{1}{w'x'y'w''^2x''^2}, \qquad
z''=\frac{w''x''}{y''}.
\end{equation*}
The sign in the last equation is determined by equations (\ref{whl: defo simple relations}). Substitute these values back into the 15 ideal generators above. The first three generators become $0,$ and we obtain an ideal with 12 generators in the variables $w,w',w'',x,x',x'',y,y',$ and $y''.$ The last three of these generators, respectively, are:
\begin{align*}
&\frac{1}{wxy}-\frac{w''x''}{wxyy''}-1,\\
&\frac{1}{w'x'y'w''^2x''^2}-\frac{1}{w'x'y'w''^2x''^2wxy}-1,\\
&\frac{w''x''}{y''}-\frac{1}{w''x''y''w'x'y'}-1.
\end{align*}
We multiply each of these by the least common multiple of its denominators, thus clearing denominators. This gives the ideal:
\begin{align*}
J=&\langle w-ww''-1, w'-ww'-1, w''-w''w'-1, x-xx''-1, x'-x'x-1,  \\
& x''-x''x'-1, y-yy''-1, y'-y'y-1, y''-y''y'-1, y''-w''x''-wxyy'', \\
& w'w''^2x'x''^2y'-1-y''w''x''w'x'y',  wxy-1-w'w''^2x'x''^2y'wxy\rangle .
\end{align*}
Since $ww'w''+1,$ $xx'x''+1,$ and $yy'y''+1$ are in $J,$ we make the following substitutions:
\begin{equation*}
w''=  \frac{-1}{ww'},\qquad
x''=\frac{-1}{xx'},\qquad
y''=\frac{-1}{yy'}
\end{equation*}
and clear denominators as before. After deleting repeated elements, we are left with four generators:
\begin{equation*}
K=\langle ww'+1-w', xx'+1-x', yy'+1-y', yy'-w^2x^2yw'x'+ww'xx'\rangle.
\end{equation*}

We relabel the variables so they are alphabetically contiguous, starting from the beginning of the alphabet, as follows:
$w\rightarrow a,$ $w'\rightarrow b,$ $x\rightarrow d,$ $ x'\rightarrow e,$ $ y\rightarrow c,$ $ y'\rightarrow f.$

We now perform a few operations to make computing the tropical variety easier. Namely, 
we homogenise the ideal by first homogenising the generators and then saturating with respect to the homogenising variable. For ideals $I_0$ and $J_0,$ the saturation is the set of all elements $f$ in the ambient ring such that $J_0^Nf$ is contained in $I_0$, for some integer $N$. Saturating an ideal by a principal monomial ideal does not alter the tropical variety. 

We homogenise using the variable $g$ and call the resulting ideal $L$:
\begin{equation*}
L=\langle ab+g^2-bg, de+g^2-eg, cf+g^2-fg, cfg^5-a^2d^2cbe+abdeg^3\rangle.
\end{equation*}
We saturate $L$ with respect to $\langle g\rangle$ and find minimal generators, using \texttt{Macaulay 2}  \cite{M2}. The final ideal is:
\begin{align*}
&\langle de-eg+g^2, cf-fg+g^2, ab-bg+g^2,\\
&acd-ace+acg-bcd+bce-bcg-beg+bg^2+cdg-ceg+cg^2+eg^2-fg^2\rangle.
\end{align*}
Next we run \texttt{Gfan}'s commands \texttt{tropical\_starting\_cone} and \texttt{tropical\_traverse} on these generators, and we obtain a tropical variety isomorphic to the tropical variety of $I$ with the isomorphism determined by the above substitutions. The result is shown in Figure~\ref{fig:whl_log_lim-D}.



\address{Stephan Tillmann\\School of Mathematics and Statistics F07, The University of Sydney, NSW 2006 Australia\\{stephan.tillmann@sydney.edu.au}}

\Addresses


\end{document}